\def\cl@chapter{}  
\begin{document}

\title{Adaptive Multi-Grade Deep Learning for Highly Oscillatory Fredholm Integral Equations of the Second Kind %\thanks{Grants or other notes
%about the article that should go on the front page should be
%placed here. General acknowledgments should be placed at the end of the article.}
}
%\subtitle{Do you have a subtitle?\\ If so, write it here}

\titlerunning{Adaptive Deep Learning}        % if too long for running head

\author{Jie Jiang        \and
        Yuesheng Xu %etc.
}

%\authorrunning{Short form of author list} % if too long for running head

\institute{Jie Jiang  \at
              School of Computer Science and Engineering, Sun Yat-sen University,Guangzhou 510275, People's Republic of China \\
              \email{jiangj73@mail2.sysu.edu.cn}           %  \\
%             \emph{Present address:} of F. Author  %  if needed
           \and
           Yuesheng Xu \at
              Department of Mathematics and Statistics, Old Dominion University,Norfolk, VA 23529, USA \\
              \email{y1xu@odu.edu} 
}

\date{Received: date / Accepted: date}
% The correct dates will be entered by the editor

\maketitle

\begin{abstract}
This paper studies the use of Multi-Grade Deep Learning (MGDL) for solving highly oscillatory Fredholm integral equations of the second kind. We provide rigorous error analyses of continuous and discrete MGDL models, showing that the discrete model retains the convergence and stability of its continuous counterpart under sufficiently small quadrature error. We identify the DNN training error as the primary source of approximation error, motivating a novel adaptive MGDL algorithm that selects the network grade based on training performance. Numerical experiments with highly oscillatory (including wavenumber 500) and singular solutions confirm the accuracy, effectiveness and robustness of the proposed approach.
\keywords{Deep Neural Networks \and Oscillatory Fredholm Integral Equations \and Adaptive Multi-Grade Deep Learning}
% \PACS{PACS code1 \and PACS code2 \and more}
\subclass{65R20}
\end{abstract}

\section{Introduction}

This paper investigates the use of deep neural networks (DNNs) for the numerical solution of the oscillatory Fredholm integral equation of the second kind. These equations arise in a variety of physical and engineering applications, notably in wave propagation and scattering problems such as electromagnetic scattering \cite{2010The,2003Inverse}. While classical numerical methods for such equations are well established \cite{Atkinson,2010The,2015Multiscale,wang2015oscillation,Xiang2023}, recent advances in scientific machine learning have spurred interest in exploring DNN-based methods. DNNs have shown promising results for solving partial differential equations (PDEs) \cite{raissi2018deep,raissi2019physics}, variational problems \cite{2018The}, and more recently, integral equations \cite{jiang2024deep}.

A major challenge in applying DNNs to oscillatory integral equations is the high-frequency nature of the solution, which inherits oscillations from the kernel \cite{Brunner2015,wang2015oscillation}. Standard DNNs exhibit a well-documented \emph{spectral bias} \cite{Rahaman2019}, meaning they tend to learn low-frequency components first and struggle to capture high-frequency features. This bias makes it particularly difficult to approximate solutions dominated by high-frequency modes, as is typical in oscillatory integral equations with complex exponential kernels.

To address this limitation, the \emph{multi-grade deep learning} (MGDL) framework was introduced in \cite{Xu:2023aa}. Rather than training a deep network in an end-to-end fashion, MGDL incrementally constructs the network grade-by-grade through a sequence of shallow subnetworks. Each grade builds upon the previous approximation by taking its output as input and learning to approximate the residual. 
Early grades capture coarse, low-frequency components, while later grades are exposed to finer, high-frequency residuals—effectively forcing the network to learn oscillatory features that standard DNNs tend to neglect due to spectral bias. This residual learning strategy naturally decouples the frequency content across grades, allowing each subnetwork to specialize in a different spectral band. As a result, MGDL adapts well to the multiscale nature of oscillatory solutions and has demonstrated effectiveness in function approximation \cite{Xu2023(2),Xu:2023aa}, partial differential equations \cite{XuZeng2023}, and more recently in countering spectral bias in deep learning \cite{fang2024addressing}.

In this paper, we extend the MGDL framework to oscillatory Fredholm integral equations by developing a novel adaptive algorithm that automatically determines the number of grades based on the training error. While MGDL was previously applied empirically to such equations in \cite{jiang2024deep}, no theoretical foundation was established. Here, we provide a rigorous mathematical analysis of both the continuous and discrete MGDL models in this setting.

A major limitation of standard deep learning methods is the need to pre-specify network depth before training. Since the optimal depth is typically unknown, suboptimal performance often necessitates adjusting the depth and retraining the network from scratch. In contrast, MGDL builds networks incrementally, grade by grade, eliminating the need to predefine depth. This incremental construction naturally leads to the adaptive MGDL (AMGDL) algorithm, a novel approach not studied in previous work, for which we establish rigorous theoretical guarantees.

Our main contributions are as follows:
\begin{itemize}
    \item We provide a rigorous error analysis of both continuous and discrete MGDL models, showing that the discrete model retains the convergence and stability properties of its continuous counterpart under small integration error.
    \item We identify the DNN training error as the dominant source of approximation error, motivating an adaptive approach based on training performance.
    \item We propose the AMGDL algorithm that dynamically selects the network grade to balance model complexity and solution accuracy.
    \item We validate the effectiveness of the adaptive strategy through numerical experiments involving highly oscillatory and singular solutions.
\end{itemize}

The remainder of the paper is organized as follows. Section~2 introduces the oscillatory Fredholm integral equation and reviews related DNN-based methods, including both the continuous and discrete MGDL models. Sections~3 and 4 present error analysis for the continuous and discrete MGDL models, respectively. Section~5 introduces the AMGDL algorithm and establishes its theoretical guarantees. Section~6 presents numerical experiments that demonstrate the accuracy and robustness of AMGDL, even for problems with wavenumber 500 and singularities, and confirm the theoretical results established in this paper. The paper concludes in Section~7.

\section{Deep Learning Model}
\label{sec_single}

In this section, we present the oscillatory Fredholm integral equation under consideration and describe the deep learning framework developed to approximate its solution.

We begin by formulating the Fredholm integral equation with an oscillatory kernel. 
Let $I:=[-1,1]$, and denote by $C(I)$ the space of continuous, complex-valued functions on $I$, and by $C(I^2)$ the space of continuous complex-valued bivariate functions on $I^2$. Given a kernel function $K\in C(I^2)$ and a right-hand side $f\in C(I)$, we consider the oscillatory Fredholm integral equation 
\begin{equation}
y(s)-\int_{I} K(s, t) e^{i\kappa |s-t|} y(t)\mathrm{d}t=f(s), \ \  s\in I, \label{fredholm_equation}
\end{equation} 
where  $\kappa\geqslant 1$ is the wavenumber, and $y\in C(I)$ is the unknown solution. The kernel captures oscillatory interactions typical in wave propagation and scattering problems—such as acoustic, electromagnetic, or quantum waves. The exponential term $e^{i\kappa |s-t|}$ models a one-dimensional wave propagating with frequency $\kappa$ and decaying or reflecting based on the geometry of the domain.
We are particularly interested in the case where the wavenumber 
$\kappa\geq 100$. In this regime, equation \eqref{fredholm_equation} is classified as highly oscillatory, meaning the solution or kernel exhibits rapid oscillations over the domain. Such problems pose significant challenges for traditional numerical methods, which often require extremely fine discretizations or specialized techniques to maintain accuracy and stability.

% We define the integral operator $\mathcal{K}$ for $h\in C(I)$ as
% \begin{equation*}
% (\mathcal{K}h)(s):=\int_{I} K(s, t) e^{i\kappa |s-t|} h(t)\mathrm{d}t, \ \  s\in I. \label{def_K}
% \end{equation*} 
% Clearly,  $\mathcal{K}$ is a compact operator on $C(I)$. The integral equation \eqref{fredholm_equation} can then be expressed in its operator form as
% \begin{equation}
%     (\mathcal{I}-\mathcal{K})y=f, \label{fredholm_equation_operator}
% \end{equation}
% where $\mathcal{I}$ denotes the identity operator on $C(I)$. 

%To isolate the effects of the oscillatory kernel, we assume throughout this paper that the kernel function takes the simplified form
%\begin{equation*}
%    K(s,t):=\lambda,\ \ (s,t)\in I\times I
%\end{equation*}
%where $\lambda\in \mathbb{C}$ is a constant. 
%This assumption eliminates additional structural complexity, allowing us to focus on how effectively the method captures the oscillatory behavior arising from the exponential factor. All results in this paper extend without difficulty to equations with a general kernel $K$. 
The associated integral operator $\mathcal{K}$ is defined for $h \in C(I)$ by
\begin{equation*}
(\mathcal{K}h)(s):=\int_{I} K(s,t)e^{i\kappa |s-t|}h(t)\mathrm{d}t, \ \  s\in I. \label{def_K}
\end{equation*} 
Then, the Fredholm integral equation \eqref{fredholm_equation} can be expressed in operator form as
\begin{equation}
    (\mathcal{I}-\mathcal{K})y=f, \label{fredholm_equation_operator}
\end{equation}
where $\mathcal{I}$ denotes the identity operator on $C(I)$. Since $\mathcal{K}$ is compact, the operator equation \eqref{fredholm_equation_operator} admits a unique solution in $C(I)$ provided that $1$ is not an eigenvalue of $ \mathcal{K}$.
It is known \cite{wang2015oscillation} that solutions to equation \eqref{fredholm_equation_operator} generally exhibits oscillatory behavior, with the degree of oscillation proportional to the wavenumber.
The aim of this paper is to develop a numerical method that adaptively determines the number of layers in a DNN solution by leveraging the multi-grade deep learning framework to effectively handle such highly oscillatory problems.  

We now introduce DNNs used as approximate solutions of the integral equation.  We adopt the notation from \cite{Xu:2021aa,XuZhang2023}.
A DNN is a function formed by compositions of vector-valued functions, each defined by applying an activation function to an affine transformation.
Specifically, given a univariate activation function $\sigma:\mathbb{R}\to \mathbb{R}$, the corresponding vector-valued activation function ${\sigma}:\mathbb{R}^d\to \mathbb{R}^d$ is defined as
$$
{\sigma}(\mathbf{v}):= [\sigma(v_1), \sigma(v_2), \ldots, \sigma(v_d)]^T, \ \ \mbox{for}\ \ \mathbf{v} := [v_1, v_2, \ldots, v_{d}]^T\in \mathbb{R}^{d}.
$$
For vector-valued functions $f_j$, $j=1,2, \dots, n$, where the range of $f_j$ is contained within the domain of $f_{j+1}$, their  consecutive composition is denoted by
$$
\bigodot_{j=1}^n f_j:=f_n\circ f_{n-1}\circ \dots \circ f_1.
$$
%For a sequence $[m_j: j\in \mathbb{Z}_{n+1}]$ satisfying $m_0=1, m_n=2$. Among all $n+1$ layers, the first layer and the last layer are called the input layer and output layer respectively, while the other layers are called the hidden layer. 
Let $m_j$, $j=0,1, \dots, n$, be a sequence of positive integers with $m_0:=1, m_n:=2$. The first and last layers of the network are referred to as the input and output layers, respectively, while the intermediate layers are called hidden layers.
Given weight matrices $\mathbf{W}_j\in \mathbb{R}^{m_j\times m_{j-1}}$ and bias vectors $\mathbf{b}_j\in \mathbb{R}^{m_j}$, $j\in \mathbb{N}_{n}$, a DNN of depth $n$ is defined as
\begin{equation}
    \mathcal{N}_n(\theta;s) := \left(\mathbf{W}_n\bigodot_{j=1}^{n-1}\boldsymbol{\sigma}(\mathbf{W}_j\cdot+\mathbf{b}_j)+\mathbf{b}_n\right)(s), \quad s\in I, \label{dnn_output}
\end{equation}
where $\theta:=\{\mathbf{W}_j, \mathbf{b}_j\}_{j=1}^n$ represents the set of all trainable parameters. Furthermore, the output of last hidden layer is referred to as the feature of the DNN and is given by
\begin{equation}
    \mathcal{F}_{n-1}(\{\mathbf{W}_j, \mathbf{b}_j\}_{j=1}^{n-1};s) := \left(\bigodot_{j=1}^{n-1}\boldsymbol{\sigma}(\mathbf{W}_j\cdot+\mathbf{b}_j)\right)(s), \quad s\in I. \label{dnn_feature}
\end{equation}

The goal of this paper is to develop an adaptive deep learning method for approximating the solution of the integral equation \eqref{fredholm_equation} using a neural network representation of the form \eqref{dnn_output}. Since the solution is complex-valued, we introduce an operator $\mathcal{T}$ that maps a real-valued, two-dimensional vector-valued function to a complex-valued function. Specifically, for a function $\mathbf{f}(s) = [f_1(s), f_2(s)]^T$ with $f_1, f_2$ real-valued on $I$, we define
\[
(\mathcal{T} \mathbf{f})(s) := f_1(s) + i f_2(s), \quad s \in I.
\]
The operator $\mathcal{T}$ will be used to map real-valued DNN outputs to complex functions.
Using this transformation, we define the residual (loss) function associated with the integral equation \eqref{fredholm_equation_operator} as
\begin{flalign}
    e\left(\theta; s\right) := 
    (\mathcal{I} -  \mathcal{K})\, \mathcal{T} \mathcal{N}_n(\theta; \cdot)(s) - f(s), \quad s \in I, \label{hat_single_grade_error_function_K}
\end{flalign}
where $\theta := \{\mathbf{W}_j, \mathbf{b}_j\}_{j=1}^n$ denotes the network parameters. The optimal parameters $\theta^*$ are obtained as solutions of the optimization problem
\begin{flalign}
    \min_{\theta} \left\|e(\theta; \cdot)\right\|_2^2. \label{hat_DNN_opt_K}
\end{flalign}
This formulation represents the continuous analogue of the DNN training problem. The resulting minimization problem is generally non-convex, and the existence of a global minimizer is not guaranteed. Throughout this paper, we assume that all such minimization problems admit solutions.

Once $\theta^*$ is found from optimization problem \eqref{hat_DNN_opt_K}, the function 
$$
y^* := \mathcal{T} \mathcal{N}_n(\theta^*; \cdot)
$$ 
serves as a DNN approximation to the solution of equation \eqref{fredholm_equation}. However, due to the well-documented spectral bias of DNNs \cite{Rahaman2019}, directly optimizing \eqref{hat_DNN_opt_K} often fails to yield an accurate approximation, especially for highly oscillatory solutions.
To address this challenge, we propose an adaptive learning strategy based on the MGDL framework introduced in \cite{Xu:2023aa}. %This approach incrementally builds up the network structure to improve approximation quality while avoiding overfitting and inefficiencies associated with fixed-depth architectures.

We begin by describing the MGDL model.
Fix a positive integer $L\leq n$ and select $L$ positive integers $n_1$, $n_2$, $\dots$, $n_L$ such that $n-1=\sum_{l=1}^L n_l$. For grade 1, we use a neural network with $n_1$ layers. The error function at this grade is defined by
\begin{equation}
    e_1\left(\{\mathbf{W}_{j}, \mathbf{b}_{j}\}_{j=1}^{n_1}; s\right):=
    \left(f- (\mathcal{I}-\mathcal{K})\mathcal{T}\mathcal{N}_{n_1}(\{\mathbf{W}_{j}, \mathbf{b}_{j}\}_{j=1}^{n_1};\cdot)\right)(s),\quad s\in I.  \label{def_e1_con}
\end{equation} 
The optimal parameters $\{\mathbf{W}_{1,j}^*, \mathbf{b}_{1,j}^*\}_{j=1}^{n_1}$ are obtained by solving the optimization problem
%{\small
\begin{equation}
 \min \left\{\left\|e_1\left(\{\mathbf{W}_{j}, \mathbf{b}_{j}\}_{j=1}^{n_1}; \cdot \right)\right\|_2^2:  \mathbf{W}_{j}\in \mathbb{R}^{m_{1,j}\times m_{1,j-1}}, \mathbf{b}_{j}\in \mathbb{R}^{m_{1,j}}, j\in \mathbb{N}_{n_1}\right\},
 \label{multi_opt_problem_con1}
 \end{equation}%} 
with $m_{1,0}=1$ and $m_{1, n_1}=2$. Once the parameters $\{\mathbf{W}_{1,j}^*, \mathbf{b}_{1,j}^*\}_{j=1}^{n_1}$ are obtained,  we define the grade-1 feature function as
$$
\mathbf{g}_1(s):= \mathcal{F}_{n_1-1}\left(\{\mathbf{W}_{1,j}^*, \mathbf{b}_{1, j}^*\}_{j=1}^{n_1-1};s\right),\quad  s\in I,
$$
where $\mathcal{F}_{n_1-1}$ denotes the feature map up to the second-to-last layer.

The grade-1 solution component is given by
$$
\mathbf{f}_1(s):=\mathbf{W}_{1,2}^*\mathbf{g}_1(s)+\mathbf{b}_{1,2}^*, \quad s\in I,
$$
and the resulting optimal error function is
\begin{equation}
e_1^*(s) := e_1\left(\{\mathbf{W}^*_{1,j}, \mathbf{b}^*_{1,j}\}_{j=1}^{n_1}; s\right)\in C(I), \quad s\in I. \label{def_fin_e1_con}
\end{equation}

Suppose that the neural networks  $\mathbf{g}_l:I\to \mathbb{R}^{m_{l}}$, $\mathbf{f}_l:I\to \mathbb{R}^2$,  and the corresponding error function $e_l^*:I\to \mathbb{C}$ for some grade $l< L$ have been learned. We now proceed to construct grade $l+1$. We begin by defining the approximate solution accumulated up to grade $l$ as
\begin{equation}
    y^*_{l}:=\sum_{j=1}^{l} \mathcal{T} \mathbf{f}_j, \quad l\in \mathbb{N}_L. \label{def_y_l_con}
\end{equation}
The error function for grade $l+1$ is then given by
\begin{equation*}
e_{l+1}\left(\left\{\mathbf{W}_{j}, \mathbf{b}_{ j}\right\}_{j=1}^{n_{l+1}}; s\right):= \left(f-(\mathcal{I}-\mathcal{K})(y^*_{l}+\mathcal{T}\mathcal{N}_{2}\left(\{\mathbf{W}_{j}, \mathbf{b}_{j}\}_{j=1}^{n_{l+1}};\mathbf{g}_{l}(\cdot)\right)\right)(s),\label{def_e_q+1_con}
\end{equation*}
for $s\in I$.
To learn the parameters for grade $l+1$,
we solve the following optimization problem to obtain the minimizer $\{\mathbf{W}_{l+1,j}^*, \mathbf{b}_{l+1,j}^*\}_{j=1}^{n_{l+1}}$:
\begin{flalign}  
    &\min \left\{\left\|e_{l+1}\left(\left\{\mathbf{W}_{j}, \mathbf{b}_{j}\right\}_{j=1}^{n_{l+1}}; \cdot\right)\right\|_2^2: \right. \nonumber \\  
    &\quad \quad \left.\mathbf{W}_{j}\in \mathbb{R}^{m_{l+1,j}\times m_{l+1,j-1}}, \mathbf{b}_{j}\in \mathbb{R}^{m_{l+1,j}}, j\in \mathbb{N}_{n_{l+1}}\right\}, \label{mult_weight_con}  
\end{flalign}
with $m_{l+1,0}=m_{l,n_l-1}$ and $m_{l+1, n_{l+1}}=2$. Once the optimal parameters are learned, we define the feature representation of grade $l+1$ as
\begin{equation}
    \mathbf{g}_{l+1}(s):= \mathcal{F}_{n_{l+1}-1}\left(\left\{\mathbf{W}_{l+1,j}^*, \mathbf{b}_{l+1, j}^*\right\}_{j=1}^{n_{l+1}-1};\mathbf{g}_l(s)\right),\quad s\in I, \label{feature_grade_l_1_con}
\end{equation} 
the corresponding solution component as
$$ 
\mathbf{f}_{l+1}(s):=\mathbf{W}_{l+1,2}^*\mathbf{g}_{l+1}(s)+\mathbf{b}_{l+1. 2}^*,\quad s\in I,
$$
and the resulting optimal error as 
\begin{equation}
e^*_{l+1}(s) := e_{l+1}\left(\left\{\mathbf{W}^*_{l+1,j}, \mathbf{b}^*_{l+1,j}\right\}_{j=1}^{n_{l+1}}; s\right)\in C(I), \quad s\in I. \label{def_fin_e_q+1_con}
\end{equation}
This process is repeated for each grade  $l<L$. The learning procedure may terminate early at some grade $l<L$ if the error norm $\|e_l^*\|_2$ falls below a prescribed tolerance. A theoretical justification for this stopping criterion will be presented in the next section. 

Finally, the multi-grade DNN approximation to the solution is defined by
$$
y^*_{L}:=\sum_{l=1}^{L} \mathcal{T} \mathbf{f}_l.
$$ 
In the formulation above, we assume that the errors $\left\|e_{l}\left(\left\{\mathbf{W}_{j}, \mathbf{b}_{j}\right\}_{j=1}^{n_l}; \cdot\right)\right\|_2^2$ are computed exactly, referring to this framework as the {\it continuous multi-grade deep learning model}.
In the next section, we establish both lower and upper bounds for the error $\|y-y^*_{l}\|_2$ in terms of the training error $e_l^*$ within the continuous MGDL framework.

The continuous MGDL model described above provides a theoretical foundation but is not directly implementable in practice. In real-world applications, evaluating the objective function and training the model require numerical integration, which introduces discretization. This leads naturally to a discrete version of the MGDL model. In what follows, we briefly recall the discrete MGDL framework developed in our previous work \cite{jiang2024deep}, which serves as the basis for practical implementation.

%%%%%%%%%%%%%%%%
The continuous MGDL requires solving the optimization problems \eqref{multi_opt_problem_con1} and \eqref{mult_weight_con}. Implementing it requires computing
\begin{itemize}
    \item the $L_2$-norm of functions involved
    \item the integral operator $\mathcal{K}$.
\end{itemize}
To this end, we assume that distinct points $\{x_j\}_{j=1}^N$ in $I$ are chosen.
The $L_2$-norm $\|g\|_2$ of a function $g \in C(I)$ is approximated by the discrete semi-norm
\begin{equation*}
\|g\|_N := \sqrt{\frac{1}{N} \sum_{j=1}^N |g(x_j)|^2}, \quad g \in C(I),
\end{equation*}
where $|\cdot|$ denotes the modulus of a complex number. This semi-norm is derived from the $\ell_2$-norm on $\mathbb{C}^N$ but is not a true norm on $C(I)$, since $\|g\|_N = 0$ does not necessarily imply $g = 0$.

For any $g \in C(I)$, define the vector $\mathbf{v}_g := [g(x_j)]_{j=1}^N \in \mathbb{C}^N$. Then the relationship between the discrete semi-norm and the standard $\ell_2$-norm is given by
\begin{equation}
\|g\|_N = \frac{\|\mathbf{v}_g\|_{\ell_2}}{\sqrt{N}}. \label{relation_N_2}
\end{equation}
%%%%%%%%%%%%%%%%%

We assume that a numerical integration scheme has been chosen with quadrature nodes $\{s_j\}_{j=0}^{p_\kappa}$ to construct a discrete operator $\mathcal{K}_{p_\kappa}$ that approximates the continuous integral operator $\mathcal{K}$, where $p_\kappa$ is a positive integer depending on $\kappa$. Replacing $\mathcal{K}$ in equation \eqref{fredholm_equation_operator} with $\mathcal{K}_{p_\kappa}$ and collocating the resulting equation at nodes $\{x_j\}_{j=1}^N$ yields the discrete system:
\begin{equation}\label{discrete_operator_Equation}
((\mathcal{I} - \mathcal{K}_{p_\kappa})y_h)(x_j) = f(x_j), \quad j=1,2,\dots,N,
\end{equation}
where $y_h$ is an approximation to the true solution $y$. In general, the quadrature and collocation nodes need not coincide. However, by the theory of collectively compact operators \cite{anselone1971collectively}, if $\mathcal{K}_{p_\kappa}$ converges pointwise to $\mathcal{K}$ and $p_\kappa$ is sufficiently large, then the discrete system \eqref{discrete_operator_Equation} admits a unique solution for any given right-hand side $f$, provided the quadrature and collocation nodes coincide.

%%%%%% recall the single-grade learning model
With the discrete $L_2$-norm and the discrete integral operator $\mathcal{K}_{p_{_\kappa}}$, we recall the single-grade learning model introduced in  \cite{jiang2024deep}. Motivated by the discrete system \eqref{discrete_operator_Equation}, we define the corresponding loss function as
 \begin{flalign*}
    \tilde{e}\left(\{\mathbf{W}_j, \mathbf{b}_j\}_{j=1}^{n};s\right):= \left(f-
    (\mathcal{I}-\mathcal{K}_{p_{_\kappa}}) \mathcal{T}\mathcal{N}_n(\{\mathbf{W}_j, \mathbf{b}_j\}_{j=1}^{n};\cdot)\right) (s),\quad  s\in I, 
\end{flalign*}
which serves as a discrete approximation to the contonuous loss function defined in equation \eqref{hat_single_grade_error_function_K}.
The optimal parameters $\{\tilde{\mathbf{W}}^*_j, \tilde{\mathbf{b}}^*_j\}_{j=1}^n$ are obtained by solving the minimization problem
\begin{flalign}
\min_{\{\mathbf{W}_j, \mathbf{b}_j\}_{j=1}^n} \left\|\tilde{e}\left(\{\mathbf{W}_j, \mathbf{b}_j\}_{j=1}^{n};\cdot\right)\right\|_{N}^2.  \label{sgl_opt}
\end{flalign}
Note that employing the discrete norm $\|\cdot\|_N$ in \eqref{sgl_opt} restricts the optimization problem to the discrete points used to define this norm. Therefore, the optimization problem \eqref{sgl_opt} is discrete.
The corresponding  numerical solution is given by 
\begin{equation*}
     \tilde{y}^*:=\mathcal{T}\mathcal{N}_{n}(\{\tilde{\mathbf{W}}_j^*, \tilde{\mathbf{b}}_j^*\}_{j=1}^n;\cdot) \label{def_fin_y_star}
\end{equation*}
and the associated error function is defined as
\begin{equation*}
\tilde{e}^*(s):= \tilde{e}\left(\{\tilde{\mathbf{W}}_j^*, \tilde{\mathbf{b}}_j^*\}_{j=1}^{n};s\right),\quad s\in I. 
\end{equation*}

%%%%%%%%%%%%%%%%% recall the discrete multi-grade learning model

We now introduce the discrete MGDL model for numerically solving equation \eqref{fredholm_equation}. As in the continuous MGDL model, we are given a sequence of positive integers $n_l$, for $l=1,2, \dots, L$, such that 
$n=\sum_{l=1}^L n_l$.  Based on this setup, the model solves $L$ interconnected minimization problems. 

For grade 1, define the error function by
\begin{equation*}
    \tilde{e}_1\left(\{\mathbf{W}_{j}, \mathbf{b}_{j}\}_{j=1}^{n_1}; s\right):=
    \left(f- (\mathcal{I}-\mathcal{K}_{p_{_\kappa}})\mathcal{T}\mathcal{N}_{n_1}(\{\mathbf{W}_{j}, \mathbf{b}_{j}\}_{j=1}^{n_1};\cdot)\right)(s)\in C(I), s\in I.  \label{def_e1_dis}
\end{equation*} 
We then solve the following optimization problem:
\begin{equation}
 \min \left\{\left\|\tilde{e}_1\left(\{\mathbf{W}_{j}, \mathbf{b}_{j}\}_{j=1}^{n_1}; \cdot \right)\right\|_{N}^2:  \mathbf{W}_{j}\in \mathbb{R}^{m_{1,j}\times m_{1,j-1}}, \mathbf{b}_{j}\in \mathbb{R}^{m_{1,j}}, j\in \mathbb{N}_{n_1}\right\}, \label{multi_opt_problem_dis_grade_1}
 \end{equation}
with $m_{1,0}=1$ and $m_{1, n_1}=2$, yielding the optimal parameters  $\{\tilde{\mathbf{W}}_{1,j}^*, \tilde{\mathbf{b}}_{1,j}^*\}_{j=1}^{n_1}$. 
Define the feature map and solution component of grade 1 as
$$
\tilde{\mathbf{g}}_1(s):= \mathcal{F}_{n_1-1}\left(\{\tilde{\mathbf{W}}_{1,j}^*, \tilde{\mathbf{b}}_{1, j}^*\}_{j=1}^{n_1-1};s\right),\quad  s\in I, 
$$
and
$$
\tilde{\mathbf{f}}_1(s):=\tilde{\mathbf{W}}_{1,n_1}^*\mathbf{g}_1(s)+\tilde{\mathbf{b}}_{1,n_1}^*, \quad s\in I,
$$
respectively, where $\mathcal{F}_{n_1-1}$ is defined as in equation \eqref{dnn_feature}. The corresponding error is given by 
\begin{equation}
\tilde{e}_1^*(s) := \tilde{e}_1\left(\{\tilde{\mathbf{W}}^*_{1,j}, \tilde{\mathbf{b}}^*_{1,j}\}_{j=1}^{n_1}; s\right)\in C(I), \quad s\in I. \label{def_fin_e1_dis}
\end{equation}

Suppose that the neural networks  $\tilde{\mathbf{g}}_l:I\to \mathbb{R}^{m_{l,n_l-1}}$, $\tilde{\mathbf{f}}_l:I\to \mathbb{R}^2$, and $\tilde{e}_l^*:I\to \mathbb{C}$ have been computed for grade $l<L$. To compute the next grade, define the error function for grade $l+1$ as: for $s\in I$,
\begin{equation}
\tilde{e}_{l+1}\left(\left\{\mathbf{W}_{j}, \mathbf{b}_{ j}\right\}_{j=1}^{n_{l+1}}; s\right):= \tilde{e}^*_{l}(s)-\left[(\mathcal{I}-\mathcal{K}_{p_{_\kappa}})\mathcal{T}\mathcal{N}_{n_{l+1}}\left(\{\mathbf{W}_{j}, \mathbf{b}_{j}\}_{j=1}^{n_{l+1}};\mathbf{g}_{l}(\cdot)\right)\right](s).  \label{def_e_q+1_dis}
\end{equation}
We then consider the optimization problem  
\begin{equation}\label{mult_weight_dis}
    \min_{\substack{\mathbf{W}_{j}\in \mathbb{R}^{m_{l+1,j}\times m_{l+1,j-1}} \\[2pt] \mathbf{b}_{j}\in \mathbb{R}^{m_{l+1,j}}, \; j\in \mathbb{N}_{n_{l+1}}}}
    \left\| \tilde{e}_{\,l+1}\!\left(\{\mathbf{W}_{j}, \mathbf{b}_{j}\}_{j=1}^{n_{l+1}}; \cdot\right)\right\|_{N}^{2},
\end{equation}
where $m_{l+1,0}=m_{l,n_l-1}$ and $m_{l+1,n_{l+1}}=2$.  
The solution of \eqref{mult_weight_dis} yields the optimal parameters  
\[
\{\tilde{\mathbf{W}}_{l+1,j}^*, \tilde{\mathbf{b}}_{l+1,j}^*\}_{j=1}^{n_{l+1}}.
\]
Then define the feature and the solution component of grade $l+1$ by
\begin{equation*}
    \tilde{\mathbf{g}}_{l+1}(s):= \mathcal{F}_{n_{l+1}-1}\left(\left\{\tilde{\mathbf{W}}_{l+1,j}^*, \tilde{\mathbf{b}}_{l+1, j}^*\right\}_{j=1}^{n_{l+1}-1};\tilde{\mathbf{g}}_l(s)\right),\quad s\in I,
\end{equation*}
and 
$$ \tilde{\mathbf{f}}_{l+1}(s):=\tilde{\mathbf{W}}_{l+1,n_{l+1}}^*\tilde{\mathbf{g}}_{l+1}(s)+\tilde{\mathbf{b}}_{l+1. n_{l+1}}^*,\quad s\in I.
$$
The corresponding optimal error is defined by  
\begin{equation}
\tilde{e}^*_{l+1}(s) := \tilde{e}_{l+1}\left(\left\{\tilde{\mathbf{W}}^*_{l+1,j}, \tilde{\mathbf{b}}^*_{l+1,j}\right\}_{j=1}^{n_{l+1}}; s\right)\in C(I), \quad s\in I. \label{def_fin_e_q+1_dis}
\end{equation}
After completing all $L$ grades, the final  discrete multi-grade DNN approximation to $y$ is given by
$$
\tilde{y}^*_{L}:=\sum_{l=1}^{L} \mathcal{T} \tilde{\mathbf{f}}_l\in C(I).
$$

%As for the analysis of continuous multi-grade learning model,
%it has been proved the norm of optimal error is a non-increasing function about the grades for function approximation problem, see Theorem 4.4 in paper \cite{Xu:2023aa}. In our task, a similar result will be given in next section. Readers should note that, in describing this algorithm, due to the nature of gradient-based optimization algorithms, we only assume that the optimization problems \cref{multi_opt_problem_con1}, \cref{mult_weight_con} being solved yields a local minimum. Therefore, the proofs in the next chapter are conducted under conditions weaker than those presented in the paper \cite{Xu:2023aa}.

\section{Error Analysis of the Continuous MGDL Model}\label{Analysis-of-Cont}

In this section, we derive upper and lower bounds for the error of the continuous MGDL model in terms of the training loss. We begin by proving that the norm of the optimal error is non-increasing with respect to the number of grades. Leveraging this monotonicity, we then establish that, for a continuous MGDL model with $L$ grades, the approximation error at grade $l+1$ is bounded by a constant multiple of the error at grade $l$, for all $l=1,2, \dots, {L-1}$.

We note that the final layer of a neural network performs an affine transformation without an activation function. Since affine transformations are well understood analytically, we isolate this final transformation to facilitate the theoretical analysis.

For each grade $l=1,2, \dots, L$, define the following parameter spaces
$$\Theta_{l, 1}:=\left\{\left(\{\mathbf{W}_{j}, \mathbf{b}_{j}\}_{j=1}^{n_{l}-1}\right):\mathbf{W}_{j}\in \mathbb{R}^{m_{l,j}\times m_{l,j-1}}, \mathbf{b}_{j}\in \mathbb{R}^{m_{l,j}}, j\in \mathbb{N}_{n_{l}-1}\right\}$$
%equipped with  norm
%$$ \left\|\left(\{\mathbf{W}_{j}, \mathbf{b}_{j}\}_{j=1}^{n_{l}-1}\right)\right\|_{\Theta_{l,1}}:=\left(\sum_{j=1}^{n_{l}-1}\left(\|\mathbf{W}_{j}\|_2^2+\|\mathbf{b}_{j}\|_2^2\right)\right)^{\frac{1}{2}}, \  \mbox{for all}\   \left(\{\mathbf{W}_{j}, \mathbf{b}_{j}\}_{j=1}^{n_{l}-1}\right)\in \Theta_{l,1}$$
and
$$\Theta_{l,2}:=\{(\mathbf{W}, \mathbf{b}):\mathbf{W}\in \mathbb{R}^{m_{l,n_{l}}\times m_{l,n_l-1}}, \mathbf{b}\in \mathbb{R}^{m_{l,n_{l}}}\}.$$
%equipped with norm
%$$ \left\|\left(\mathbf{W}, \mathbf{b}\right)\right\|_{\Theta_{l,2}}:=\left(\|\mathbf{W}\|_2^2+\|\mathbf{b}\|_2^2\right)^{\frac{1}{2}}, \quad \textrm{for \,all\,}\left(\mathbf{W}, \mathbf{b}\right)\in \Theta_{l,2}.$$
As the feature function $\mathbf{g}_l$ of grade $l$ is fixed, an operator $\mathcal{M}_{l}: \Theta_{l,2}\to C(I)$ is defined as
\begin{equation} 
\mathcal{M}_{l}((\mathbf{W}, \mathbf{b})):= (\mathcal{I}-\mathcal{K})\mathcal{T}(\mathbf{W}\mathbf{g}_{l}(\cdot)+\mathbf{b})\in C(I), \quad  (\mathbf{W},\mathbf{b})\in \Theta_{l,2}.  \label{def_M_con}
\end{equation}
We let 
$$
\mathcal{M}_{l}(\Theta_{l, 2}):=\{\mathcal{M}_{l}((\mathbf{W}, \mathbf{b})): (\mathbf{W}, \mathbf{b})\in \Theta_{l,2}\}
$$
and observe that $\mathcal{M}_{l}(\Theta_{l, 2})$ is a linear space.

Meanwhile, for any $l\in \mathbb{N}_L$, define
\begin{equation*}
   z_l:=\mathcal{M}_{l}\left(\left(\mathbf{W}^*_{l,n_{l}}, \mathbf{b}^*_{l,n_{l}}\right)\right)\in C(I). \label{def_z_l}
\end{equation*}
Clearly, 
$
z_l\in \mathcal{M}_{l}(\Theta_{l, 2}).
$
In the  following analysis, we will prove that for any $l=1,2, \dots, {L-1}$, the function $z_{l+1}$ is the unique best approximation to the error $e_l^*$ from the linear space $\mathcal{M}_{l+1}(\Theta_{l+1,2})$. To establish this result, we first present two auxiliary lemmas. For generality and to ensure their applicability to both the continuous and discrete MGDL models, we provide proofs in a unified framework.

The following lemma can be used to fix the parameters in a neural network except for the last layer.

\begin{lemma} \label{lem_mult1}
Let $\mathcal{B}_1$ and $\mathcal{B}_2$ be two given sets, and let $F : \mathcal{B}_1 \times \mathcal{B}_2 \to [0, +\infty)$ be a target function. If $(\theta_1^*, \theta_2^*) \in \mathcal{B}_1 \times \mathcal{B}_2$ is a  local minimizer of the problem
\begin{equation}
    \min_{(\theta_1, \theta_2) \in \mathcal{B}_1 \times \mathcal{B}_2} F(\theta_1, \theta_2), \label{opt1}
\end{equation}
then $\theta_2^* \in \mathcal{B}_2$ is a local minimizer of the reduced problem
\begin{equation}
    \min_{\theta_2 \in \mathcal{B}_2} F(\theta_1^*, \theta_2). \label{opt2}
\end{equation}
\end{lemma}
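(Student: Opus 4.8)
The plan is to argue directly from the definition of a local minimizer, freezing the first variable at $\theta_1^*$ and restricting attention to the corresponding slice $\{\theta_1^*\} \times \mathcal{B}_2$. Implicit in the statement is that $\mathcal{B}_1$ and $\mathcal{B}_2$ carry a topology (in the MGDL application they are subsets of finite-dimensional Euclidean spaces, with $\mathcal{B}_1 \times \mathcal{B}_2$ equipped with the product metric), so that ``local minimizer'' is meaningful; I would state this mild structural assumption at the outset.

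First I would invoke the hypothesis: since $(\theta_1^*, \theta_2^*)$ is a local minimizer of \eqref{opt1}, there is a neighborhood $U$ of $(\theta_1^*, \theta_2^*)$ in $\mathcal{B}_1 \times \mathcal{B}_2$ on which $F(\theta_1^*, \theta_2^*) \le F(\theta_1, \theta_2)$. Next, using that the product topology has a base of product sets, I would extract neighborhoods $V_1$ of $\theta_1^*$ in $\mathcal{B}_1$ and $V_2$ of $\theta_2^*$ in $\mathcal{B}_2$ with $V_1 \times V_2 \subseteq U$. Concretely, with a product metric one may simply take $V_1$ and $V_2$ to be balls of a common radius $\delta>0$, since a $\delta$-ball about $(\theta_1^*,\theta_2^*)$ contains $V_1 \times V_2$.

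The key observation is then immediate: for every $\theta_2 \in V_2$, the point $(\theta_1^*, \theta_2)$ lies in $V_1 \times V_2 \subseteq U$ (the first coordinate being exactly $\theta_1^*$, its contribution to the distance vanishes), so the local-minimizer inequality applies and gives $F(\theta_1^*, \theta_2^*) \le F(\theta_1^*, \theta_2)$. This is precisely the assertion that $\theta_2^*$ minimizes the map $\theta_2 \mapsto F(\theta_1^*, \theta_2)$ over the neighborhood $V_2$, i.e.\ $\theta_2^*$ is a local minimizer of the reduced problem \eqref{opt2}, completing the argument.

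I do not anticipate any real obstacle here; the statement is essentially a restatement of the definition of a local minimizer along a coordinate slice. The only point meriting care is the passage from the abstract neighborhood $U$ to a product neighborhood $V_1 \times V_2$, which relies entirely on the topological (or metric) structure of $\mathcal{B}_1 \times \mathcal{B}_2$ rather than on any property of $F$; notably, no continuity, differentiability, or convexity of $F$ is needed, which is what makes the lemma applicable to the non-convex training losses \eqref{multi_opt_problem_con1} and \eqref{mult_weight_con} where it will be used to isolate the final affine layer.
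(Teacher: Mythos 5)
Your proof is correct, and it rests on the same core observation as the paper's: freezing the first coordinate at $\theta_1^*$, points of the slice $\{\theta_1^*\}\times\mathcal{B}_2$ near $\theta_2^*$ are automatically near $(\theta_1^*,\theta_2^*)$, so the local-minimizer inequality transfers to the reduced problem. The difference is purely one of logical form: you argue directly, extracting a product neighborhood $V_1\times V_2\subseteq U$ and restricting to the slice, whereas the paper argues by contradiction with sequences --- a violating sequence $s_j\to\theta_2^*$ for the reduced problem lifts to the violating sequence $t_j=(\theta_1^*,s_j)\to(\theta_1^*,\theta_2^*)$ for the full problem. The sequential formulation is slightly leaner in that it never needs the fact that product neighborhoods form a base (the lift of a sequence is trivially a sequence in the product), while your direct version has the merit of avoiding contradiction and, more substantively, of making explicit a point the paper glosses over: the lemma is stated for arbitrary ``sets'' $\mathcal{B}_1,\mathcal{B}_2$, yet any notion of local minimizer (or of convergence $s_j\to\theta_2^*$, as in the paper's own proof) presupposes a topological or metric structure, which in the intended application is just the Euclidean structure on the parameter spaces $\Theta_{l,1},\Theta_{l,2}$. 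Your closing remark that no continuity, convexity, or differentiability of $F$ is used is also accurate and is precisely why the lemma applies to the non-convex training losses.
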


\begin{proof}
We prove by contradiction. Suppose $\theta_2^*$ is not a local minimizer of \eqref{opt2}. Then there exists a sequence $\{s_j\} \subset \mathcal{B}_2$ such that $s_j \to \theta_2^*$ and
\begin{equation}
F(\theta_1^*, s_j) < F(\theta_1^*, \theta_2^*) \quad \text{for all } j \in \mathbb{N}. \label{lem_mult1_eq}
\end{equation}
Define $t_j := (\theta_1^*, s_j) \in \mathcal{B}_1 \times \mathcal{B}_2$. Then $t_j \to (\theta_1^*, \theta_2^*)$ and, by \eqref{lem_mult1_eq},
\[
F(t_j) < F(\theta_1^*, \theta_2^*) \quad \text{for all } j \in \mathbb{N},
\]
which contradicts the assumption that $(\theta_1^*, \theta_2^*)$ is a local minimizer of \eqref{opt1}. Therefore, $\theta_2^*$ must be a local minimizer of \eqref{opt2}.
\end{proof}

The following lemma shows that, owing to the convexity of the objective function, any local minimizer in the parameter space $\mathcal{B}$ for the semi-norm error corresponds to the global best approximation in the function subspace $\mathcal{U}(\mathcal{B})$. This result is formulated in a general semi-normed setting and will later be applied to the discrete MGDL model.

\begin{lemma} \label{lem_multi2}
Let $\mathcal{X}$ be a linear space equipped with a semi-norm $|\cdot|$, and let $\mathcal{U} : \mathcal{B} \to \mathcal{X}$ be a linear operator from a Banach space $\mathcal{B}$ into $\mathcal{X}$. Suppose that $s \in \mathcal{X}$. If $t^* \in \mathcal{B}$ is a local minimizer of the optimization problem
\begin{equation}
    \min_{t \in \mathcal{B}} \, |s - \mathcal{U}t|, \label{lem_multi2_opt}
\end{equation}
then $\mathcal{U}t^* \in \mathcal{U}(\mathcal{B})$ is the best approximation to $s$ from the linear subspace $\mathcal{U}(\mathcal{B})$ with respect to the semi-norm $|\cdot|$.
\end{lemma}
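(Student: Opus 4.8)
The plan is to exploit the convexity of the objective functional to upgrade the local minimality of $t^*$ to global minimality, and then to read off the best-approximation property on the image subspace $\mathcal{U}(\mathcal{B})$. The completeness of $\mathcal{B}$ plays no role; only its linear (hence convex) structure is used.

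First I would introduce the objective $\phi(t) := |s - \mathcal{U}t|$ for $t \in \mathcal{B}$ and show that $\phi$ is convex on $\mathcal{B}$. This is immediate from the linearity of $\mathcal{U}$ together with the subadditivity and absolute homogeneity of the semi-norm: for $t_1, t_2 \in \mathcal{B}$ and $\lambda \in [0,1]$, linearity gives $s - \mathcal{U}(\lambda t_1 + (1-\lambda)t_2) = \lambda(s - \mathcal{U}t_1) + (1-\lambda)(s - \mathcal{U}t_2)$, and the triangle inequality for $|\cdot|$ then yields $\phi(\lambda t_1 + (1-\lambda)t_2) \le \lambda \phi(t_1) + (1-\lambda)\phi(t_2)$.

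Next I would invoke the standard principle that a convex function on the convex set $\mathcal{B}$ has every local minimizer as a global minimizer, but I would supply the short argument to keep the lemma self-contained. Arguing by contradiction, if some $t' \in \mathcal{B}$ satisfied $\phi(t') < \phi(t^*)$, then convexity along the segment $t_\lambda := (1-\lambda)t^* + \lambda t'$ would give $\phi(t_\lambda) \le (1-\lambda)\phi(t^*) + \lambda\phi(t') < \phi(t^*)$ for every $\lambda \in (0,1]$. Since $t_\lambda \to t^*$ as $\lambda \to 0^+$, this exhibits points arbitrarily close to $t^*$ with strictly smaller objective value, contradicting that $t^*$ is a local minimizer. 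Hence $\phi(t^*) \le \phi(t)$ for all $t \in \mathcal{B}$.

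Finally I would rephrase this global minimality as the desired best-approximation statement. Because every element of $\mathcal{U}(\mathcal{B})$ has the form $\mathcal{U}t$ for some $t \in \mathcal{B}$, the inequality $|s - \mathcal{U}t^*| \le |s - \mathcal{U}t|$ for all $t$ is exactly the assertion that $\mathcal{U}t^*$ attains the minimal semi-distance from $s$ to the subspace $\mathcal{U}(\mathcal{B})$; that is, $\mathcal{U}t^*$ is a best approximation. I do not anticipate a genuine obstacle: the entire argument reduces to the convexity observation of the first step. The only point deserving care is that a semi-norm need not yield a \emph{unique} best approximation, so the conclusion should be understood as "$\mathcal{U}t^*$ achieves the minimum" rather than as a uniqueness claim.
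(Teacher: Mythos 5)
Your proposal is correct and follows essentially the same route as the paper's proof: observe that $t \mapsto |s - \mathcal{U}t|$ is convex as the composition of an affine map with a semi-norm, conclude that the local minimizer $t^*$ is global, and translate this into the best-approximation statement on $\mathcal{U}(\mathcal{B})$. The only difference is that you spell out the convexity computation and the local-implies-global argument, which the paper invokes without proof, and your closing remarks (completeness of $\mathcal{B}$ being unused, and no uniqueness claim) are both accurate.
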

\begin{proof}
The function $t \mapsto |s - \mathcal{U}t|$ is convex on $\mathcal{B}$ because it is the composition of a linear map and a semi-norm. Hence, any local minimizer is also a global minimizer.

Since $t^* \in \mathcal{B}$ is a local minimizer of \eqref{lem_multi2_opt}, it must also be a global minimizer. Thus,
\[
    |s - \mathcal{U}t^*| \leq |s - \mathcal{U}t|, \quad \text{for all } t \in \mathcal{B}.
\]
This means that $\mathcal{U}t^* \in \mathcal{U}(\mathcal{B})$ minimizes the semi-norm distance to $s$ over the set $\mathcal{U}(\mathcal{B})$, i.e., it is the best approximation to $s$ from the linear subspace $\mathcal{U}(\mathcal{B})$ with respect to $|\cdot|$.
\end{proof}

The following result for our model follows directly from the preceding two lemmas.

\begin{lemma} \label{lem_multi3_feature_space}
For any $l\in \mathbb{N}_{L-1}$, $z_{l+1}$ is the unique best approximation from the linear space $\mathcal{M}_{l+1}(\Theta_{l+1,2})$ to $e_l^*$.
\end{lemma}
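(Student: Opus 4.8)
The plan is to reduce the grade-$(l+1)$ optimization \eqref{mult_weight_con} to a best-approximation problem over the final affine layer and then chain together Lemmas \ref{lem_mult1} and \ref{lem_multi2}. The first step is to establish the algebraic identity $e_l^* = f - (\mathcal{I}-\mathcal{K})y_l^*$ by induction on $l$, using the definition \eqref{def_y_l_con} of $y_l^*$ together with \eqref{def_fin_e1_con} and \eqref{def_fin_e_q+1_con}, and the fact that the optimal last-layer output $\mathbf{W}^*_{l,n_l}\mathbf{g}_l + \mathbf{b}^*_{l,n_l}$ is exactly $\mathbf{f}_l$. Substituting this identity into the grade-$(l+1)$ error function shows that, writing the parameters as $\theta=(\theta_1,\theta_2)$ with $\theta_1\in\Theta_{l+1,1}$ (the first $n_{l+1}-1$ layers) and $\theta_2=(\mathbf{W}_{n_{l+1}},\mathbf{b}_{n_{l+1}})\in\Theta_{l+1,2}$ (the final affine layer), the loss takes the form $\|e_l^* - (\mathcal{I}-\mathcal{K})\mathcal{T}(\mathbf{W}_{n_{l+1}}\mathcal{F}_{n_{l+1}-1}(\theta_1;\mathbf{g}_l)+\mathbf{b}_{n_{l+1}})\|_2^2$, which lies in $[0,+\infty)$.

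Next I would apply Lemma \ref{lem_mult1} with $\mathcal{B}_1=\Theta_{l+1,1}$, $\mathcal{B}_2=\Theta_{l+1,2}$, and $F$ the squared loss above. Since the optimal parameters are assumed to solve \eqref{mult_weight_con} and hence form a (global, therefore local) minimizer, the lemma shows that the last-layer parameters $\theta_2^*=(\mathbf{W}^*_{l+1,n_{l+1}},\mathbf{b}^*_{l+1,n_{l+1}})$ are a local minimizer of the reduced problem in which $\theta_1$ is frozen at $\theta_1^*$. Freezing $\theta_1^*$ freezes the feature map, so by \eqref{feature_grade_l_1_con} the reduced objective is precisely $\min_{(\mathbf{W},\mathbf{b})\in\Theta_{l+1,2}}\|e_l^*-\mathcal{M}_{l+1}((\mathbf{W},\mathbf{b}))\|_2^2$ with $\mathcal{M}_{l+1}$ as in \eqref{def_M_con}. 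Because $t\mapsto t^2$ is strictly increasing on $[0,+\infty)$, squaring preserves local minimizers, so $\theta_2^*$ is also a local minimizer of the unsquared semi-norm distance.

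I would then verify that $\mathcal{M}_{l+1}$ is a linear operator on $\Theta_{l+1,2}$: the map $(\mathbf{W},\mathbf{b})\mapsto \mathbf{W}\mathbf{g}_{l+1}(\cdot)+\mathbf{b}$ is linear in the pair, and both $\mathcal{T}$ (as an $\mathbb{R}$-linear map into $\mathbb{C}$) and $\mathcal{I}-\mathcal{K}$ are linear, so their composition is linear and $\mathcal{M}_{l+1}(\Theta_{l+1,2})$ is a genuine linear subspace. Applying Lemma \ref{lem_multi2} with $\mathcal{X}=C(I)$ under $\|\cdot\|_2$, $\mathcal{B}=\Theta_{l+1,2}$ (finite-dimensional, hence Banach), $\mathcal{U}=\mathcal{M}_{l+1}$, and $s=e_l^*$ then yields that $z_{l+1}=\mathcal{M}_{l+1}(\theta_2^*)$ is a best approximation to $e_l^*$ from $\mathcal{M}_{l+1}(\Theta_{l+1,2})$.

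Finally, uniqueness is the one place where the continuous setting is essential and where I expect the only genuine subtlety. Here $\|\cdot\|_2$ is a true norm induced by the $L^2$ inner product and is therefore strictly convex, while $\mathcal{M}_{l+1}(\Theta_{l+1,2})$, being finite-dimensional, is a closed convex subspace of $C(I)$. Strict convexity forces the best approximation from a closed convex set to be unique whenever it exists, so $z_{l+1}$ is the \emph{unique} best approximation. The main obstacle is thus not any single hard estimate but the bookkeeping: correctly identifying the $(\theta_1,\theta_2)$ splitting so that the two lemmas compose, and remembering that uniqueness must invoke the inner-product structure, since Lemma \ref{lem_multi2} (formulated for a mere semi-norm) guarantees only existence.
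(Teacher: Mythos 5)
Your proposal is correct and follows essentially the same route as the paper's own proof: reduce the grade-$(l+1)$ problem to the last affine layer via Lemma~\ref{lem_mult1}, identify the reduced objective with $\min_{\theta\in\Theta_{l+1,2}}\|e_l^*-\mathcal{M}_{l+1}(\theta)\|_2$, invoke Lemma~\ref{lem_multi2} for best approximation, and obtain uniqueness from strict convexity of the $L_2$ norm on the closed subspace $\mathcal{M}_{l+1}(\Theta_{l+1,2})$. The only differences are that you spell out details the paper leaves implicit (the inductive identity $e_l^*=f-(\mathcal{I}-\mathcal{K})y_l^*$, the fact that squaring preserves minimizers, and the linearity of $\mathcal{M}_{l+1}$), which strengthens rather than changes the argument.
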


\begin{proof}
For any $l \in \mathbb{N}_{L-1}$, we apply Lemma~\ref{lem_mult1} with the following choices:
$\mathcal{B}_1 := \Theta_{l+1,1}$, $\mathcal{B}_2 := \Theta_{l+1,2}$, 
\[\theta_1^* := \left(\{\mathbf{W}_{l+1,j}^*, \mathbf{b}_{l+1,j}^*\}_{j=1}^{n_{l+1}-1}\right), \quad 
\theta_2^* := \left(\mathbf{W}_{l+1,n_{l+1}}^*, \mathbf{b}_{l+1,n_{l+1}}^*\right).
\]
Let $F: \Theta_{l+1,1} \times \Theta_{l+1,2} \to [0, +\infty)$ be the objective function defined by the training error:
\[
F(\theta_1, \theta_2) := \left\|e_{l+1}(\theta_1 \times \theta_2; \cdot)\right\|_2.
\]
Since $(\theta_1^*, \theta_2^*)$ is a local minimizer of $F$, Lemma~\ref{lem_mult1} implies that $\theta_2^*$ is a local minimizer of the reduced problem
\[
\min_{\theta \in \Theta_{l+1,2}} \left\|e_{l+1}(\theta_1^* \times \theta; \cdot)\right\|_2.
\]
Using the definition of $e_{l+1}$ from \eqref{mult_weight_con}, this is equivalent to minimizing
\[
\min_{\theta \in \Theta_{l+1,2}} \left\|e_l^* - (\mathcal{I} -  \mathcal{K})\mathcal{T} \mathcal{N}_{n_{l+1}}(\theta_1^* \times \theta; \cdot) \right\|_2.
\]
By the definition of the operator $\mathcal{M}_{l+1}$ in \eqref{def_M_con}, this reduces to the optimization problem
\[
\min_{\theta \in \Theta_{l+1,2}} \left\|e_l^* - \mathcal{M}_{l+1}(\theta)\right\|_2.
\]

Now, apply Lemma~\ref{lem_multi2} with the settings:
\[
\mathcal{X} := C(I), \quad |\cdot| := \|\cdot\|_2, \quad \mathcal{U} := \mathcal{M}_{l+1}, \quad s := e_l^*, \quad t^* := \theta_2^*.
\]
Then, $\mathcal{M}_{l+1}(\theta_2^*) = z_{l+1}$ is the best approximation to $e_l^*$ in %in the subspace 
$\mathcal{M}_{l+1}(\Theta_{l+1,2}) \subset C(I)$ with respect to the $L_2$ norm.

Finally, uniqueness follows from the strict convexity of the $L_2$ norm and the fact that $\mathcal{M}_{l+1}(\Theta_{l+1,2})$ is a closed linear subspace of the Hilbert space $L_2(I)$.
\end{proof}

We now show that the sequence $\|e_l^*\|_2$, $l=1,2, \dots, L$, is non-increasing. This result extends Theorem 3 of \cite{Xu:2023aa}, which was originally established for function approximation, to the numerical solution of integral equations. In addition, Theorem 3 in \cite{Xu:2023aa} considered an MGDL architecture in which the output layer is retained when a new grade is added. In contrast, the result presented here applies to an MGDL framework where the output layer from the previous grade is removed when a new grade is added, a setting that more closely reflects practical implementations.

\begin{proposition} \label{thm_multi_con_e_j}
For each $l=1,2, \dots, {L-1}$, either $z_{l+1}=0$ or $\|e^*_{l+1}\|_2< \|e^*_{l}\|_2$.
\end{proposition}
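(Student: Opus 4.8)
The plan is to reduce the claim to a best-approximation statement in the Hilbert space $L_2(I)$ and then exploit the uniqueness already recorded in Lemma~\ref{lem_multi3_feature_space}.

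First I would establish the telescoping identity $e^*_{l+1} = e^*_l - z_{l+1}$. At the optimal parameters of grade $l+1$, the network output reduces to its last affine layer acting on the feature $\mathbf{g}_{l+1}$, so that $\mathcal{T}\mathcal{N}_{n_{l+1}}(\{\mathbf{W}^*_{l+1,j}, \mathbf{b}^*_{l+1,j}\}_{j=1}^{n_{l+1}}; \mathbf{g}_l(\cdot)) = \mathcal{T}\mathbf{f}_{l+1}$ and hence $z_{l+1} = \mathcal{M}_{l+1}((\mathbf{W}^*_{l+1,n_{l+1}}, \mathbf{b}^*_{l+1,n_{l+1}})) = (\mathcal{I}-\mathcal{K})\mathcal{T}\mathbf{f}_{l+1}$. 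Substituting the accumulated solution $y^*_{l+1} = y^*_l + \mathcal{T}\mathbf{f}_{l+1}$ from \eqref{def_y_l_con} into the definition of $e_{l+1}$ and using the relation $e^*_l = f - (\mathcal{I}-\mathcal{K})y^*_l$, which follows from unwinding the grade-$l$ definition at its optimum, yields
\[
e^*_{l+1} = \bigl(f - (\mathcal{I}-\mathcal{K})y^*_l\bigr) - (\mathcal{I}-\mathcal{K})\mathcal{T}\mathbf{f}_{l+1} = e^*_l - z_{l+1}.
\]
Tracking these substitutions cleanly is the fiddliest part of the argument, though it is conceptually routine.

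Next I would note that $\mathcal{M}_{l+1}(\Theta_{l+1,2})$ is a linear subspace and in particular contains $0$ (take $(\mathbf{W},\mathbf{b}) = (0,0)$). Since Lemma~\ref{lem_multi3_feature_space} identifies $z_{l+1}$ as the best approximation to $e^*_l$ from this subspace, and $0$ is one admissible competitor, I obtain for free the non-strict inequality
\[
\|e^*_{l+1}\|_2 = \|e^*_l - z_{l+1}\|_2 \leq \|e^*_l - 0\|_2 = \|e^*_l\|_2.
\]

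Finally, to upgrade to a strict inequality whenever $z_{l+1}\neq 0$, I would argue by contradiction. If instead $\|e^*_{l+1}\|_2 = \|e^*_l\|_2$, then $0$ attains exactly the same distance to $e^*_l$ as the best approximation $z_{l+1}$, so $0$ is itself a best approximation from $\mathcal{M}_{l+1}(\Theta_{l+1,2})$. By the uniqueness asserted in Lemma~\ref{lem_multi3_feature_space}, which rests on the strict convexity of the $L_2$ norm and the closedness of the subspace, this forces $z_{l+1} = 0$, contradicting the assumption. Hence either $z_{l+1} = 0$ or $\|e^*_{l+1}\|_2 < \|e^*_l\|_2$, which is the desired dichotomy. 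The only genuine subtlety is recognizing that $0$ is the right competitor and that uniqueness of the best approximation is exactly what converts the weak inequality into the strict one.
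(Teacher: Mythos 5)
Your proposal is correct and follows essentially the same route as the paper: the telescoping identity $e^*_{l+1}=e^*_l-z_{l+1}$, the observation that $0$ lies in the linear space $\mathcal{M}_{l+1}(\Theta_{l+1,2})$, and the uniqueness of the best approximation from Lemma~\ref{lem_multi3_feature_space} to force the strict inequality. The only cosmetic differences are that you spell out the derivation of the telescoping identity (which the paper asserts directly from \eqref{def_fin_e_q+1_con}) and phrase the final step as a contradiction rather than invoking uniqueness directly; these are logically equivalent.
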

\begin{proof}
For each  $l=1,2, \dots, {L-1}$, suppose that $z_{l+1}\neq 0$. By the definition  of $e_{l+1}^*$ in \cref{def_fin_e_q+1_con}, we have 
$$
e_{l+1}^*=e_l^*-z_{l+1}.
$$ 
Since $0$ and $z_{l+1}$ are distinct elements of $\mathcal{M}_{l+1}(\Theta_{l+1, 2})$, and $z_{l+1}$ is the unique best approximation of $e_l^*$ from $\mathcal{M}_{l+1}(\Theta_{l+1,2})$ by Lemma \ref{lem_multi3_feature_space},
it follows that
$$
\|e^*_{l+1}\|_2=\|e^*_{l}-z_{l+1}\|_2< \|e^*_{l}-0\|_2=\|e^*_l\|_2.
$$
This completes the proof.
\end{proof}

If $1$ is not the eigenvalue of the linear operator $\mathcal{K}$, then by the Fredholm Alternative Theorem, the operator $\mathcal{I}-\mathcal{K}$ is invertible, and its inverse $(\mathcal{I}- \mathcal{K})^{-1}$ is bounded. This fact enables the derivation of a theorem that characterizes the error behavior across successive grades. Before stating the theorem, we first present the following lemma, which describes the relationship between $y_l^*$ and $e_l^*$ for $l=1,2, \dots, L$.

\begin{lemma} \label{lem_e_l_star}
For each $l=1,2, \dots, L$, it holds that 
\[
e_l^* = (\mathcal{I} - \mathcal{K})(y - y_l^*).
\]
\end{lemma}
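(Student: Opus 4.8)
The plan is to reduce the claim to the single identity $e_l^* = f - (\mathcal{I}-\mathcal{K})y_l^*$ and then invoke the governing equation $(\mathcal{I}-\mathcal{K})y = f$ from \eqref{fredholm_equation_operator}. Because each grade's error function already carries the accumulated solution of the preceding grades, no induction is actually required: the identity can be verified grade-by-grade by direct substitution. First I would treat the base grade $l=1$. By the definition of $\mathbf{f}_1$ as the final affine layer applied to the feature $\mathbf{g}_1$, the accumulated solution $y_1^* = \mathcal{T}\mathbf{f}_1$ coincides with $\mathcal{T}\mathcal{N}_{n_1}$ evaluated at the optimal parameters $\{\mathbf{W}_{1,j}^*, \mathbf{b}_{1,j}^*\}$. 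Substituting this into the definition \eqref{def_fin_e1_con} of $e_1^*$ yields $e_1^* = f - (\mathcal{I}-\mathcal{K})y_1^*$ at once.

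For a general grade $l+1$, I would argue analogously using the recursive structure. The accumulated solution satisfies $y_{l+1}^* = y_l^* + \mathcal{T}\mathbf{f}_{l+1}$ by \eqref{def_y_l_con}, and $\mathcal{T}\mathbf{f}_{l+1}$ equals $\mathcal{T}\mathcal{N}_{n_{l+1}}$ evaluated at the optimal parameters with feature input $\mathbf{g}_l$, since $\mathbf{f}_{l+1}$ is precisely the final affine transformation of the feature $\mathbf{g}_{l+1}$. Inserting these into the definition \eqref{def_fin_e_q+1_con} of $e_{l+1}^*$, whose error function already contains the term $y_l^*$, collapses the bracketed expression to $y_l^* + \mathcal{T}\mathbf{f}_{l+1} = y_{l+1}^*$, giving $e_{l+1}^* = f - (\mathcal{I}-\mathcal{K})y_{l+1}^*$. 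Hence the identity $e_l^* = f - (\mathcal{I}-\mathcal{K})y_l^*$ holds for every $l=1,2,\dots,L$.

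Finally, I would substitute $f = (\mathcal{I}-\mathcal{K})y$ and use linearity of $\mathcal{I}-\mathcal{K}$ to write $e_l^* = (\mathcal{I}-\mathcal{K})y - (\mathcal{I}-\mathcal{K})y_l^* = (\mathcal{I}-\mathcal{K})(y - y_l^*)$, which is exactly the assertion. The argument is in essence a matter of unwinding the recursive definitions; the only step needing care is the bookkeeping identification that, at the optimal parameters, the raw network output $\mathcal{T}\mathcal{N}_{n_l}(\cdot\,;\mathbf{g}_{l-1})$ agrees with the solution component $\mathcal{T}\mathbf{f}_l$. I expect this identification, namely confirming that the feature map $\mathbf{g}_l$ followed by the final affine layer reproduces the full network $\mathcal{N}_{n_l}$ so that no term is dropped in passing from the training loss to $y_l^*$, to be the only genuine obstacle, and it is resolved purely from the definitions of $\mathbf{g}_l$, $\mathbf{f}_l$, and $\mathcal{N}_{n_l}$.
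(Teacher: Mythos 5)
Your proof is correct, and it takes a mildly but genuinely different route from the paper's. The paper proves the identity by induction on $l$: the base case unwinds \eqref{def_fin_e1_con}, and the inductive step uses the recursion $e_{l+1}^* = e_l^* - (\mathcal{I}-\mathcal{K})\mathcal{T}\mathbf{f}_{l+1}$ together with the induction hypothesis. You instead observe, correctly, that no induction is needed in the continuous model: the grade-$(l+1)$ loss is defined with the accumulated solution $y_l^*$ appearing explicitly inside it, so at the optimal parameters one gets $e_{l+1}^* = f - (\mathcal{I}-\mathcal{K})\bigl(y_l^* + \mathcal{T}\mathbf{f}_{l+1}\bigr) = f - (\mathcal{I}-\mathcal{K})y_{l+1}^*$ directly from \eqref{def_y_l_con}, after which $f=(\mathcal{I}-\mathcal{K})y$ and linearity finish the argument. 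Your key bookkeeping step—that the full network at the optimal parameters coincides with $\mathbf{f}_{l+1}$, i.e.\ $\mathcal{N}_{n_{l+1}}(\theta^*;\mathbf{g}_l(s)) = \mathbf{W}^*_{l+1,n_{l+1}}\mathbf{g}_{l+1}(s)+\mathbf{b}^*_{l+1,n_{l+1}}$—is sound and follows from \eqref{dnn_output} and \eqref{dnn_feature}; the paper uses the same identification implicitly when it writes the recursion. What the paper's inductive formulation buys is portability: in the discrete model the grade-$(l+1)$ error is defined through the previous \emph{error} $\tilde{e}_l^*$ rather than through $\tilde{y}_l^*$ (see \eqref{def_e_q+1_dis}), so there the recursion is the only structure available, and the paper's inductive proof transfers essentially verbatim to Lemma~\ref{lem_tilde_e_l_star_dis}. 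Your direct substitution is shorter in the continuous case, but it exploits a feature specific to the continuous formulation and would have to be recast as an induction (or an explicit unrolling of the recursion) to cover the discrete analogue.
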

\begin{proof}
We prove this result by induction on $l$. From the definition of $e_1^*$ in \cref{def_fin_e1_con} and using $f = (\mathcal{I} - \mathcal{K})y$, we have
\begin{equation}
e_1^* = f - (\mathcal{I} - \mathcal{K})\mathcal{T}\mathbf{f}_1 = (\mathcal{I} - \mathcal{K})(y - \mathcal{T}\mathbf{f}_1). \label{e_1_star}
\end{equation}
Now, assume that for some $l=1,2, \dots, {L-1}$, the identity 
\[
e_l^* = (\mathcal{I} - \mathcal{K})\left(y - \sum_{j=1}^l \mathcal{T}\mathbf{f}_j\right)
\]
holds. Then, by the recursive definition of $e_{l+1}^*$ in \cref{def_fin_e_q+1_con}, we have
\[
e_{l+1}^* = e_l^* - (\mathcal{I} - \mathcal{K})\mathcal{T}\mathbf{f}_{l+1}.
\]
Substituting the induction hypothesis into the right-hand side yields
\[
e_{l+1}^* = (\mathcal{I} - \mathcal{K})\left(y - \sum_{j=1}^l \mathcal{T}\mathbf{f}_j\right) - (\mathcal{I} - \mathcal{K})\mathcal{T}\mathbf{f}_{l+1} = (\mathcal{I} - \mathcal{K})\left(y - \sum_{j=1}^{l+1} \mathcal{T}\mathbf{f}_j\right).
\]
Thus, by induction, the formula holds for all $l=1,2, \dots, L$. Using the definition of $y_l^*$ from \cref{def_y_l_con}, the result follows.
\end{proof}

We now state the theorem that quantifies the approximation error in the continuous multi-grade learning model.

\begin{theorem}
    \label{the:bound_result1_con}
Suppose that $1$ is not an eigenvalue of the linear operator $\mathcal{K}$. Then the operator $\mathcal{I} - \mathcal{K}$ is invertible, and for each $l=1,2, \dots, {L-1}$, the following error estimate holds: 
$$
\|y-y_{l+1}^*\|_2\leqslant c\|y-y_{l}^*\|_2,  
$$
where the constant $c:=\|(\mathcal{I}- \mathcal{K})^{-1}\| \|\mathcal{I}- \mathcal{K}\|$.
\end{theorem}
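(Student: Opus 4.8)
The plan is to combine the error identity of Lemma~\ref{lem_e_l_star} with the monotonicity established in Proposition~\ref{thm_multi_con_e_j}, sandwiching the quantity $\|y-y_{l+1}^*\|_2$ between two bounds expressed through the optimal errors $e_{l+1}^*$ and $e_l^*$. First I would record the invertibility claim: since $\mathcal{K}$ is compact and $1$ is assumed not to be an eigenvalue, the Fredholm Alternative Theorem guarantees that $\mathcal{I}-\mathcal{K}$ is invertible with a bounded inverse, so both operator norms $\|\mathcal{I}-\mathcal{K}\|$ and $\|(\mathcal{I}-\mathcal{K})^{-1}\|$ appearing in the constant $c$ are finite.

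The core of the argument is a three-step chain of inequalities. Applying Lemma~\ref{lem_e_l_star} at grade $l+1$ gives $y-y_{l+1}^*=(\mathcal{I}-\mathcal{K})^{-1}e_{l+1}^*$, so taking norms yields
\[
\|y-y_{l+1}^*\|_2 \leqslant \|(\mathcal{I}-\mathcal{K})^{-1}\|\,\|e_{l+1}^*\|_2.
\]
Next, Proposition~\ref{thm_multi_con_e_j} provides the monotonicity $\|e_{l+1}^*\|_2\leqslant\|e_l^*\|_2$, where the non-strict inequality absorbs the degenerate case $z_{l+1}=0$. Finally, applying Lemma~\ref{lem_e_l_star} at grade $l$ in the forward direction, $e_l^*=(\mathcal{I}-\mathcal{K})(y-y_l^*)$, gives $\|e_l^*\|_2\leqslant\|\mathcal{I}-\mathcal{K}\|\,\|y-y_l^*\|_2$. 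Concatenating the three estimates produces
\[
\|y-y_{l+1}^*\|_2 \leqslant \|(\mathcal{I}-\mathcal{K})^{-1}\|\,\|\mathcal{I}-\mathcal{K}\|\,\|y-y_l^*\|_2 = c\,\|y-y_l^*\|_2,
\]
which is exactly the desired estimate.

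There is no deep analytic obstacle here; the proof is essentially a bookkeeping exercise once the two auxiliary results are in hand. The only point requiring genuine care is consistency of the function space and the norm: Lemma~\ref{lem_e_l_star} produces identities in $C(I)$, yet the estimate is stated in the $L_2$-norm, so the operator norms of $\mathcal{I}-\mathcal{K}$ and $(\mathcal{I}-\mathcal{K})^{-1}$ must be interpreted as those induced by $\|\cdot\|_2$. This is legitimate because the continuous kernel is Hilbert--Schmidt, making $\mathcal{K}$ compact on $L_2(I)$, and because any eigenfunction of $\mathcal{K}$ lies in $C(I)$, so the hypothesis that $1$ is not an eigenvalue transfers to the $L_2$ setting and the Fredholm theory applies there. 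I would make this identification explicit at the start so that each operator-norm bound in the chain is valid.
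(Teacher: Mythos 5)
Your proof is correct and follows essentially the same route as the paper: both arguments chain the identity of Lemma~\ref{lem_e_l_star} at grades $l$ and $l+1$, the monotonicity $\|e_{l+1}^*\|_2 \leqslant \|e_l^*\|_2$ from Proposition~\ref{thm_multi_con_e_j}, and the two operator-norm bounds to arrive at the constant $c = \|(\mathcal{I}-\mathcal{K})^{-1}\|\,\|\mathcal{I}-\mathcal{K}\|$. Your closing observation that the operator norms must be interpreted as those induced by $\|\cdot\|_2$ (justified via the Hilbert--Schmidt property of the continuous kernel) addresses a point of rigor the paper leaves implicit, but it does not alter the structure of the argument.
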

\begin{proof}
Since $1$ is not an eigenvalue of $\mathcal{K}$, the operator $\mathcal{I}- \mathcal{K}$ is invertible, and its inverse $(\mathcal{I}- \mathcal{K})^{-1}$ is bounded. Define constants $$
c_1:=\|(\mathcal{I}- \mathcal{K})^{-1}\|^{-1}>0\ \mbox{and}\  c_2:=\|\mathcal{I}- \mathcal{K}\|>0.
$$
Then, for any $h\in C(I)$, the following inequality holds:
\begin{equation*} 
   c_1\|h\|_2 \leqslant\|(\mathcal{I}- \mathcal{K})h\|_2 \leqslant c_2\|h\|_2.  
\end{equation*}
Applying this inequality to $h:=y-y_{l+1}^*$ and $h:=y-y_{l}^*$ yields
\begin{equation}
       c_1 \|y-y_{l+1}^*\|_2 \leqslant\|(\mathcal{I}- \mathcal{K})(y-y_{l+1}^*)\|_2 \label{the:bound_result1_con_equ1}
\end{equation} 
and
\begin{equation}
       \|(\mathcal{I}- \mathcal{K})(y-y_{l}^*)\|_2 \leqslant c_2\|y-y_{l}^*\|_2.\label{the:bound_result1_con_equ2}
\end{equation}
By Lemma \ref{lem_e_l_star}, we can rewrite \cref{the:bound_result1_con_equ1} and \cref{the:bound_result1_con_equ2} as
\begin{equation}
    c_1\|y-y_{l+1}^*\|_2\leqslant \|e_{l+1}^*\|_2, \quad \|e_l^*\|_2\leqslant c_2 \|y-y_{l}^*\|_2. \label{the:bound_result1_con_equ}
\end{equation}
From  Proposition \ref{thm_multi_con_e_j}, we know that either $e_{l+1}^*=e_l^*$ (which occurs only if  $z_{l+1}=0$), or $\|e_{l+1}^*\|_2< \|e_{l}^*\|_2$. In either case, we have
$$\|e_{l+1}^*\|_2\leqslant \|e_{l}^*\|_2.$$ 
Combining this with \cref{the:bound_result1_con_equ}, we obtain
$$
 c_1\|y-y_{l+1}^*\|_2\leqslant \|e_{l+1}^*\|_2, \leq\|e_l^*\|_2\leqslant c_2 \|y-y_{l}^*\|_2,  
$$
which implies 
$$
\|y-y_{l+1}^*\|_2\leqslant  \frac{c_2}{c_1} \|y-y_{l}^*\|_2.
$$
and setting 
$$
c:=c_2/c_1=\|(\mathcal{I}- \mathcal{K})^{-1}\|\|\mathcal{I}- \mathcal{K}\|,
$$
we obtain the desired result.
\end{proof}

Theorem~\ref{the:bound_result1_con} provides a worst-case stability bound showing that the solution error at each additional grade of the MGDL network is controlled by a constant factor. In particular, it guarantees that the solution error cannot grow unbounded as new grades are added, thereby ensuring the stability of the multi-grade refinement process. The constant
\[
c := \|(\mathcal{I} - \mathcal{K})^{-1}\|\,\|\mathcal{I} - \mathcal{K}\|
\]
is the condition number of the operator \(\mathcal{I} - \mathcal{K}\) and satisfies \(c \ge 1\). While the bound
\[
\|y - y_{l+1}^*\|_2 \le c \|y - y_l^*\|_2
\]
does not by itself guarantee a strict monotonic decrease of the solution error, it ensures that adding grades consistently maintains stability and, for moderately conditioned operators, allows error reduction to be effectively achieved through the multi-grade refinement.

Our goal is to approximate the true solution \( y \), but the actual error \( \|y - y_l^*\|_2 \) at grade \( l \in \mathbb{N} \) is generally not accessible during the training process, as it depends on the unknown ground truth solution \( y \). In contrast, the training error \( \|e_l^*\|_2 \), which quantifies the discrepancy between the data \( f \) and the network output after applying the operator \( (\mathcal{I} - \mathcal{K}) \), is fully computable from the model. Understanding how this observable training error relates to the true (but hidden) solution error is crucial for evaluating model performance and guiding the training process. This relationship is rigorously characterized in the following theorem.

%During training, the true solution error $\|y-y_l^*\|_2$ for $l\in \mathbb{N}$ is not directly observable. However, we can compute the training error $\|e^*_l\|_2$ explicitly. The relationship between these two quantities is established in the following theorem.

\begin{theorem}
    \label{thm:bound_result2_con}
If $1$ is not an eigenvalue of the linear operator $\mathcal{K}$, then for each $l=1,2, \dots, L$, there holds 
$$
\|\mathcal{I}-\mathcal{K}\|^{-1}\|e_l^*\|_2 \leqslant\|y-y_l^*\|_2 \leqslant \|(\mathcal{I}- \mathcal{K})^{-1}\|\|e_l^*\|_2.
$$
\end{theorem}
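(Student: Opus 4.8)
The plan is to derive both inequalities directly from the single identity supplied by Lemma~\ref{lem_e_l_star}, namely $e_l^* = (\mathcal{I}-\mathcal{K})(y-y_l^*)$, together with the invertibility of $\mathcal{I}-\mathcal{K}$ guaranteed by the hypothesis that $1$ is not an eigenvalue of $\mathcal{K}$. The entire argument reduces to a pair of one-line operator-norm estimates applied to this identity, so no induction or approximation-theoretic machinery is needed at this stage; the substantive content was already absorbed into the proof of Lemma~\ref{lem_e_l_star}.

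First I would fix $l\in\mathbb{N}_L$ and record the identity $e_l^* = (\mathcal{I}-\mathcal{K})(y-y_l^*)$ from Lemma~\ref{lem_e_l_star}. For the \emph{lower} bound on $\|y-y_l^*\|_2$, I would invoke the elementary operator inequality $\|(\mathcal{I}-\mathcal{K})h\|_2 \leqslant \|\mathcal{I}-\mathcal{K}\|\,\|h\|_2$ with the choice $h := y-y_l^*$, yielding $\|e_l^*\|_2 \leqslant \|\mathcal{I}-\mathcal{K}\|\,\|y-y_l^*\|_2$, and then divide through by $\|\mathcal{I}-\mathcal{K}\|$ to obtain $\|\mathcal{I}-\mathcal{K}\|^{-1}\|e_l^*\|_2 \leqslant \|y-y_l^*\|_2$.

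For the \emph{upper} bound, I would use invertibility to rewrite the identity as $y-y_l^* = (\mathcal{I}-\mathcal{K})^{-1}e_l^*$, and then apply boundedness of the inverse, $\|(\mathcal{I}-\mathcal{K})^{-1}g\|_2 \leqslant \|(\mathcal{I}-\mathcal{K})^{-1}\|\,\|g\|_2$ with $g := e_l^*$, which gives $\|y-y_l^*\|_2 \leqslant \|(\mathcal{I}-\mathcal{K})^{-1}\|\,\|e_l^*\|_2$. Concatenating the two estimates produces the claimed two-sided bound.

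The main obstacle, such as it is, amounts only to ensuring that $\mathcal{I}-\mathcal{K}$ is genuinely invertible with bounded inverse; this follows from the Fredholm Alternative Theorem under the stated eigenvalue hypothesis, precisely as invoked in the discussion preceding Lemma~\ref{lem_e_l_star}. Indeed, I note that these same two inequalities already surface implicitly within the proof of Theorem~\ref{the:bound_result1_con} (there written with constants $c_1 = \|(\mathcal{I}-\mathcal{K})^{-1}\|^{-1}$ and $c_2 = \|\mathcal{I}-\mathcal{K}\|$), so the present result may be regarded as isolating and stating them cleanly at a single grade index $l$ rather than spread across consecutive grades $l$ and $l+1$.
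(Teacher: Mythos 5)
Your proposal is correct and follows essentially the same route as the paper's own proof: both rest on the identity $e_l^* = (\mathcal{I}-\mathcal{K})(y-y_l^*)$ from Lemma~\ref{lem_e_l_star} combined with the standard norm bounds for $\mathcal{I}-\mathcal{K}$ and its (bounded) inverse. The only difference is cosmetic — the paper states the two-sided operator inequality first and then substitutes the lemma, whereas you apply the two one-sided estimates separately — so the arguments are mathematically identical.
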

\begin{proof}
Since $1$ is not an eigenvalue of $\mathcal{K}$, the operator $\mathcal{I}- \mathcal{K}$ is invertible, and its inverse $(\mathcal{I}- \mathcal{K})^{-1}$ is bounded. Thus, for each $l\in \mathbb{N}_L$,  we obtain the inequality
\begin{equation*} 
\|(\mathcal{I}-\mathcal{K})^{-1}\|^{-1}\|y-y_l^*\|_2 \leqslant\|(\mathcal{I}- \mathcal{K})(y-y_l^*)\|_2 \leqslant \|\mathcal{I}- \mathcal{K}\|\|y-y_l^*\|_2. \end{equation*}
Applying Lemma \ref{lem_e_l_star}, which states that  $e_l^*= (\mathcal{I}- \mathcal{K})(y-y_{l}^*)$, we rewrite the above inequality as
$$
\|(\mathcal{I}- \mathcal{K})^{-1}\|^{-1}\|y-y_l^*\|_2 \leqslant\|e_l^*\|_2 \leqslant \|\mathcal{I}- \mathcal{K}\|\|y-y_l^*\|_2,
$$
which leads to the desired estimate.
\end{proof}

Theorem~\ref{thm:bound_result2_con} establishes a two-sided bound showing that the true solution error is tightly controlled by the observable training error. As a result, convergence of the method can be reliably monitored without access to the exact solution.

The practical mechanism for improvement is driven by the optimization of the training error \( \|e_l^*\|_2 \). As shown in Proposition~\ref{thm_multi_con_e_j}, successful optimization at each grade guarantees that the training error sequence is non-increasing. Theorem~\ref{thm:bound_result2_con} then provides the essential connection between training and solution errors by bounding the latter both above and below in terms of the former. Consequently, a reduction in training error forces the admissible interval containing the solution error to shift downward, signaling genuine improvement of the approximation. The condition number \( c \) characterizes the efficiency of this transfer: values of \( c \) closer to one correspond to a tighter coupling between decreases in training error and reductions in solution error.

In summary, Theorem~\ref{the:bound_result1_con} ensures worst-case stability of the multi-grade refinement, while the monotone optimization of the training error, together with the two-sided bound of Theorem~\ref{thm:bound_result2_con}, provides a practical and theoretically justified mechanism for improving the solution approximation.

%\section{Discrete MGDL Model}
%\label{sec_multi}

\section{Error Analysis of the Discrete MGDL Model}
\label{section_5}
In this section, we establish key properties of the discrete MGDL model that parallel those of the continuous model discussed in Section~\ref{Analysis-of-Cont}. In particular, we derive lower and upper bounds on the approximation error of the discrete solution, expressed in terms of the training loss and the quadrature error. The main result quantifies how close the discrete MGDL solution is to the true solution, based on the training error—which is computable during optimization—and the quadrature error—which reflects the accuracy of the numerical integration scheme.

Following a similar approach to the continuous case,  for each $l=1,2, \dots, L$, we fix the feature $\tilde{\mathbf{g}}_l$ of grade $l$ and define the operator $\tilde{\mathcal{M}}_{l}: \Theta_{l,2}\to C(I)$ as
\begin{equation}
\tilde{\mathcal{M}}_{l}((\mathbf{W}, \mathbf{b})):=(\mathcal{I}-\mathcal{K}_{p_{_\kappa}})\mathcal{T}(\mathbf{W}\tilde{\mathbf{g}}_{l}(\cdot)+\mathbf{b})\in C(I), \ \mbox{for all}\  (\mathbf{W},\mathbf{b})\in \Theta_{l,2}. \label{def_tilde_M_l}
\end{equation}
We define the linear space $\tilde{\mathcal{M}}_{l}(\Theta_{l, 2})$ analogously to the continuous case. We also set
\begin{equation}
\tilde{z}_l:=\tilde{\mathcal{M}}_{l}\left(\left(\tilde{\mathbf{W}}^*_{l,n_{l}}, \tilde{\mathbf{b}}^*_{l,n_{l}}\right)\right)\in \tilde{\mathcal{M}}_{l}(\Theta_{l, 2}). \label{def_tilde_z_l}
\end{equation}
Then, in direct analogy with Lemma \ref{lem_multi3_feature_space} for the continuous MGDL model, we immediately obtain the following lemma.

\begin{lemma} \label{lem_feature_space_dis}
For any $l=1,2, \dots, {L-1}$, $\tilde{z}_{l+1}$ is the best approximation to $\tilde{e}_l^*$ from the linear space $\tilde{\mathcal{M}}_{l+1}(\Theta_{l+1, 2})$ in the sense of the semi-norm $\|\cdot\|_{N}$.
\end{lemma}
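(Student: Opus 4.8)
The plan is to mirror exactly the structure used to establish Lemma~\ref{lem_multi3_feature_space} in the continuous setting, since the discrete model is built from the same algebraic skeleton with two systematic substitutions: the continuous operator $\mathcal{K}$ is replaced by its discrete counterpart $\mathcal{K}_{p_{_\kappa}}$, and the $L_2$ norm $\|\cdot\|_2$ is replaced by the discrete semi-norm $\|\cdot\|_N$. Crucially, the two auxiliary lemmas (Lemma~\ref{lem_mult1} and Lemma~\ref{lem_multi2}) were deliberately stated in an abstract, unified framework precisely so they apply verbatim to both models; indeed, Lemma~\ref{lem_multi2} is already phrased for a general semi-norm $|\cdot|$ on a linear space $\mathcal{X}$, which is exactly what $\|\cdot\|_N$ is on $C(I)$.

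First I would fix $l\in\mathbb{N}_{L-1}$ and apply Lemma~\ref{lem_mult1} with $\mathcal{B}_1:=\Theta_{l+1,1}$, $\mathcal{B}_2:=\Theta_{l+1,2}$, the optimal inner-layer parameters $\theta_1^*:=(\{\tilde{\mathbf{W}}_{l+1,j}^*,\tilde{\mathbf{b}}_{l+1,j}^*\}_{j=1}^{n_{l+1}-1})$ and the optimal final-layer parameters $\theta_2^*:=(\tilde{\mathbf{W}}_{l+1,n_{l+1}}^*,\tilde{\mathbf{b}}_{l+1,n_{l+1}}^*)$, with objective $F(\theta_1,\theta_2):=\|\tilde{e}_{l+1}(\theta_1\times\theta_2;\cdot)\|_N$. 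Since $(\theta_1^*,\theta_2^*)$ solves the discrete optimization problem \eqref{mult_weight_dis}, Lemma~\ref{lem_mult1} guarantees that $\theta_2^*$ is a local minimizer of the reduced problem $\min_{\theta\in\Theta_{l+1,2}}\|\tilde{e}_{l+1}(\theta_1^*\times\theta;\cdot)\|_N$. Using the definition of $\tilde{e}_{l+1}$ in \eqref{def_e_q+1_dis}, which reads $\tilde{e}_{l+1}=\tilde{e}_l^*-(\mathcal{I}-\mathcal{K}_{p_{_\kappa}})\mathcal{T}\mathcal{N}_{n_{l+1}}(\theta_1^*\times\theta;\tilde{\mathbf{g}}_l(\cdot))$, and then the definition of $\tilde{\mathcal{M}}_{l+1}$ in \eqref{def_tilde_M_l}, this reduced problem is exactly $\min_{\theta\in\Theta_{l+1,2}}\|\tilde{e}_l^*-\tilde{\mathcal{M}}_{l+1}(\theta)\|_N$.

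Next I would invoke Lemma~\ref{lem_multi2} with the settings $\mathcal{X}:=C(I)$, semi-norm $|\cdot|:=\|\cdot\|_N$, operator $\mathcal{U}:=\tilde{\mathcal{M}}_{l+1}$, target $s:=\tilde{e}_l^*$, and local minimizer $t^*:=\theta_2^*$. This yields that $\tilde{\mathcal{M}}_{l+1}(\theta_2^*)=\tilde{z}_{l+1}$ is a best approximation to $\tilde{e}_l^*$ from the linear subspace $\tilde{\mathcal{M}}_{l+1}(\Theta_{l+1,2})$ with respect to $\|\cdot\|_N$, which is precisely the claim. The only substantive difference from the continuous proof is that the uniqueness conclusion must be dropped: the statement of the lemma asserts only that $\tilde{z}_{l+1}$ is \emph{a} best approximation, not the unique one. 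This is the one genuine obstacle, and it is a feature rather than a bug—because $\|\cdot\|_N$ is a semi-norm and not a norm (as stressed in the text, $\|g\|_N=0$ does not force $g=0$), the strict-convexity argument that delivered uniqueness in Lemma~\ref{lem_multi3_feature_space} is unavailable, since distinct elements of $\tilde{\mathcal{M}}_{l+1}(\Theta_{l+1,2})$ may be indistinguishable under $\|\cdot\|_N$. I therefore would not attempt to prove uniqueness, and the lemma statement correctly omits the word ``unique.'' The remainder is purely a matter of substituting tildes and discrete objects into the continuous argument, so I expect no further difficulty.
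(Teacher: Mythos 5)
Your proposal is correct and follows essentially the same route as the paper's own proof: fixing the inner-layer parameters via Lemma~\ref{lem_mult1}, rewriting the reduced problem through the definitions of $\tilde{e}_{l+1}$ and $\tilde{\mathcal{M}}_{l+1}$, and concluding via Lemma~\ref{lem_multi2} with $\mathcal{X}:=C(I)$ and $|\cdot|:=\|\cdot\|_N$. Your observation that uniqueness must be dropped because $\|\cdot\|_N$ is only a semi-norm also matches the paper, which addresses this exact point separately in Lemma~\ref{lem_unique}.
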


\begin{proof}
Fix any $l=1,2, \dots, {L-1}$. We apply Lemma~\ref{lem_mult1} with the following settings:
\[
\mathcal{B}_1 := \Theta_{l+1,1}, \quad \mathcal{B}_2 := \Theta_{l+1,2},
\]
\[
\theta_1^* := \left(\{\tilde{\mathbf{W}}_{l+1,j}^*, \tilde{\mathbf{b}}_{l+1,j}^*\}_{j=1}^{n_{l+1}-1}\right) \in \Theta_{l+1,1},
\]
\[
\theta_2^* := \left( \tilde{\mathbf{W}}_{l+1,n_{l+1}}^*, \tilde{\mathbf{b}}_{l+1,n_{l+1}}^* \right) \in \Theta_{l+1,2},
\]
and the objective function $F$ defined in equation \eqref{mult_weight_dis}. 

By Lemma~\ref{lem_mult1}, the pair $(\theta_1^*, \theta_2^*)$ implies that $\theta_2^*$ is a local minimizer of the function
$\theta \mapsto \left\|\tilde{e}_{l+1}\left(\theta_1^* \times \theta ;\cdot\right) \right\|_{N}^2$.
From the definition of $\tilde{e}_{l+1}$ in equation \eqref{def_e_q+1_dis}, this minimization is equivalent to
\[
\min_{\theta \in \Theta_{l+1,2}} \left\| \tilde{e}_l^* - \tilde{\mathcal{M}}_{l+1}(\theta) \right\|_N^2.
\]

Now apply Lemma~\ref{lem_multi2} with:
\[
\mathcal{X} := C(I), \quad |\cdot| := \|\cdot\|_N, \quad \mathcal{U} := \tilde{\mathcal{M}}_{l+1}, \quad \mathcal{B} := \Theta_{l+1,2}, \quad s := \tilde{e}_l^*, \quad t^* := \theta_2^*.
\]
By Lemma~\ref{lem_multi2}, the element
$\tilde{z}_{l+1} := \mathcal{U} t^* = \tilde{\mathcal{M}}_{l+1}(\theta_2^*)$
is the best approximation to $\tilde{e}_l^*$ from the linear space $\tilde{\mathcal{M}}_{l+1}(\Theta_{l+1,2})$ with respect to the semi-norm $\|\cdot\|_N$.
\end{proof}

Unlike the continuous case, the discrete best approximation defined in Lemma \ref{lem_feature_space_dis} is not necessarily unique. Nonetheless, we can establish a similar, crucial property. To do so, we first present the following lemma, which essentially establishes the strict convexity of the discrete semi-norm $\|\cdot\|_N$.

\begin{lemma} \label{prop_strictly_convex}
For any $\phi,\psi\in C(I)$ satisfying $\|\phi\|_{N}=1, \|\psi\|_{N}=1, \|\phi-\psi\|_{N}\neq 0$,  it holds that
$$
\|x\phi+(1-x)\psi\|_{N}<1,  \ \  \mbox{for any} \ \ x\in (0,1).
$$
\end{lemma}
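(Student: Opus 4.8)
The plan is to transfer the problem to the finite-dimensional space $\mathbb{C}^N$ via the evaluation map $g \mapsto \mathbf{v}_g := [g(x_j)]_{j=1}^N$ and then invoke the strict convexity of the Euclidean norm, which holds because $\|\cdot\|_{\ell_2}$ is induced by an inner product. Concretely, I would set $\mathbf{u} := \mathbf{v}_\phi$ and $\mathbf{w} := \mathbf{v}_\psi$. By the relation \eqref{relation_N_2}, the hypotheses translate to $\|\mathbf{u}\|_{\ell_2} = \|\mathbf{w}\|_{\ell_2} = \sqrt{N}$, while $\|\phi-\psi\|_N \neq 0$ together with the linearity of the evaluation map (so that $\mathbf{v}_{\phi-\psi} = \mathbf{u} - \mathbf{w}$) gives $\mathbf{u} \neq \mathbf{w}$. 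The target inequality becomes $\|x\mathbf{u} + (1-x)\mathbf{w}\|_{\ell_2} < \sqrt{N}$.

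Next I would expand the squared Euclidean norm of the convex combination using the complex inner product:
\[
\|x\mathbf{u}+(1-x)\mathbf{w}\|_{\ell_2}^2 = \bigl(x^2+(1-x)^2\bigr)N + 2x(1-x)\,\mathrm{Re}\langle \mathbf{u},\mathbf{w}\rangle,
\]
where I have used $\|\mathbf{u}\|_{\ell_2}^2 = \|\mathbf{w}\|_{\ell_2}^2 = N$. The key step is to bound $\mathrm{Re}\langle \mathbf{u},\mathbf{w}\rangle$ strictly below $N$: by the Cauchy--Schwarz inequality $\mathrm{Re}\langle \mathbf{u},\mathbf{w}\rangle \le |\langle \mathbf{u},\mathbf{w}\rangle| \le \|\mathbf{u}\|_{\ell_2}\|\mathbf{w}\|_{\ell_2} = N$, and I would argue that equality forces $\mathbf{u} = \mathbf{w}$. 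Indeed, equality in Cauchy--Schwarz yields $\mathbf{w} = \lambda \mathbf{u}$ with $|\lambda| = 1$ (because the two norms agree), and then $\mathrm{Re}\langle \mathbf{u},\mathbf{w}\rangle = \mathrm{Re}(\bar\lambda)\,N = N$ forces $\mathrm{Re}(\bar\lambda) = 1$, hence $\lambda = 1$ and $\mathbf{u} = \mathbf{w}$. Since $\mathbf{u} \neq \mathbf{w}$, this gives the strict inequality $\mathrm{Re}\langle \mathbf{u},\mathbf{w}\rangle < N$.

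Finally, since $x(1-x) > 0$ for $x \in (0,1)$, substituting the strict bound produces
\[
\|x\mathbf{u}+(1-x)\mathbf{w}\|_{\ell_2}^2 < \bigl(x^2+2x(1-x)+(1-x)^2\bigr)N = \bigl(x+(1-x)\bigr)^2 N = N,
\]
and dividing by $N$ and applying \eqref{relation_N_2} once more yields $\|x\phi+(1-x)\psi\|_N < 1$, as claimed. I do not anticipate any genuine obstacle here, as the result is the standard strict convexity of a Hilbert-space norm; the only point requiring care is the equality analysis in Cauchy--Schwarz, where one must exploit the equal-norm condition to conclude $\mathbf{u} = \mathbf{w}$ rather than mere linear dependence, and thereby justify that the discrete quantity $\|\cdot\|_N$---though only a semi-norm on $C(I)$---behaves with strict convexity at the level of the evaluation vectors.
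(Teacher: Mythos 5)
Your proof is correct and follows essentially the same route as the paper's: both transfer the statement to $\mathbb{C}^N$ via the evaluation vectors and relation \eqref{relation_N_2}, and both rest on the strict convexity of the Euclidean norm on $\mathbb{C}^N$. The only difference is that the paper cites strict convexity of $\|\cdot\|_{\ell_2}$ as a known fact, whereas you prove it explicitly through the inner-product expansion and the equality case of Cauchy--Schwarz (correctly using the equal-norm condition to force $\lambda=1$ rather than mere proportionality), which is a valid filling-in of the step the paper treats as a black box.
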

\begin{proof}
Define $\mathbf{v}_\phi := [\phi(x_j)]_{j=1}^N$ and $\mathbf{v}_\psi := [\psi(x_j)]_{j=1}^N$ in $\mathbb{C}^N$. From the definition of $\|\cdot\|_N$ and relation \eqref{relation_N_2}, we have
\[
\|\phi\|_N = \|\psi\|_N = 1 \quad \Rightarrow \quad \|\mathbf{v}_\phi\|_2 = \|\mathbf{v}_\psi\|_2 = \sqrt{N},
\]
and $\|\phi - \psi\|_N \neq 0 \Rightarrow \mathbf{v}_\phi \neq \mathbf{v}_\psi$.

The $\ell_2$-norm in $\mathbb{C}^N$ is strictly convex, so for all $x \in (0,1)$,
\[
\left\|x \cdot \frac{\mathbf{v}_\phi}{\sqrt{N}} + (1 - x) \cdot \frac{\mathbf{v}_\psi}{\sqrt{N}} \right\|_2 < 1.
\]
By linearity and the definition of $\|\cdot\|_N$, we then have
\[
\|x\phi + (1 - x)\psi\|_N = \frac{\|x\mathbf{v}_\phi + (1 - x)\mathbf{v}_\psi\|_2}{\sqrt{N}} < 1.
\]
This proves the claim.
\end{proof}

In contrast to the continuous model, where the best approximation is unique, the discrete model satisfies the following alternative property.

\begin{lemma} \label{lem_unique}
For any $l\in \mathbb{N}_{L-1}$, let $\phi \in \tilde{\mathcal{M}}_{l+1}(\Theta_{l+1, 2})$ be a best approximation to $\tilde{e}_l^*$ from the linear space $\tilde{\mathcal{M}}_{l+1}(\Theta_{l+1, 2})$ with respect to the semi-norm $\|\cdot\|_{N}$. Then 
$$
\|\phi-\tilde{z}_{l+1}\|_{N}=0.
$$
\end{lemma}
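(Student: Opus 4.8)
The plan is to argue by contradiction, exploiting the strict convexity of the discrete semi-norm established in Lemma~\ref{prop_strictly_convex}. Since both $\phi$ and $\tilde{z}_{l+1}$ are best approximations to $\tilde{e}_l^*$ from the same linear space $\tilde{\mathcal{M}}_{l+1}(\Theta_{l+1,2})$ (the latter by Lemma~\ref{lem_feature_space_dis}), they realize a common optimal distance. First I would set
\[
d:=\min\left\{\|\tilde{e}_l^*-g\|_N: g\in \tilde{\mathcal{M}}_{l+1}(\Theta_{l+1,2})\right\},
\]
so that $\|\tilde{e}_l^*-\phi\|_N=\|\tilde{e}_l^*-\tilde{z}_{l+1}\|_N=d$. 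The degenerate case $d=0$ must be dispatched separately, because the strict-convexity lemma applies only to unit-semi-norm elements: if $d=0$, then the triangle inequality for the semi-norm gives $\|\phi-\tilde{z}_{l+1}\|_N\le \|\phi-\tilde{e}_l^*\|_N+\|\tilde{e}_l^*-\tilde{z}_{l+1}\|_N=0$, and we are done.

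For the remaining case $d>0$, I would normalize the two error elements by setting
\[
\psi_1:=\frac{\tilde{e}_l^*-\phi}{d},\qquad \psi_2:=\frac{\tilde{e}_l^*-\tilde{z}_{l+1}}{d},
\]
so that $\|\psi_1\|_N=\|\psi_2\|_N=1$. The key observation is the algebraic identity
\[
\psi_1-\psi_2=\frac{1}{d}\left(\tilde{z}_{l+1}-\phi\right),
\]
which shows $\|\psi_1-\psi_2\|_N\neq 0$ is equivalent to the assumption $\|\phi-\tilde{z}_{l+1}\|_N\neq 0$ that I aim to contradict. Assuming $\|\phi-\tilde{z}_{l+1}\|_N\neq 0$, Lemma~\ref{prop_strictly_convex} applied with $x=\tfrac{1}{2}$ yields $\left\|\tfrac{1}{2}\psi_1+\tfrac{1}{2}\psi_2\right\|_N<1$.

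The final step is to translate this strict inequality back into a statement about the midpoint $m:=\tfrac{1}{2}\phi+\tfrac{1}{2}\tilde{z}_{l+1}$. Since $\tilde{\mathcal{M}}_{l+1}(\Theta_{l+1,2})$ is a linear space, $m$ belongs to it and is therefore an admissible competitor. Because $\tfrac{1}{2}\psi_1+\tfrac{1}{2}\psi_2=\tfrac{1}{d}(\tilde{e}_l^*-m)$, the strict inequality above becomes $\|\tilde{e}_l^*-m\|_N<d$, contradicting the minimality of $d$. Hence $\|\phi-\tilde{z}_{l+1}\|_N=0$. The main obstacle, and the reason this statement replaces the uniqueness claim of the continuous Lemma~\ref{lem_multi3_feature_space}, is precisely that $\|\cdot\|_N$ is only a semi-norm: distinct minimizers may genuinely exist, so the sharpest conclusion available is that their difference lies in the null space of $\|\cdot\|_N$. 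Carefully isolating the $d=0$ case is what makes the strict-convexity machinery legitimate in the surviving case $d>0$.
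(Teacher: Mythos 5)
Your proof is correct and follows essentially the same route as the paper's: the same split into the cases $d=0$ (dispatched by the triangle inequality) and $d>0$ (normalizing the two error elements and invoking the strict convexity of $\|\cdot\|_N$ from Lemma~\ref{prop_strictly_convex} to contradict the optimality of $d$). If anything, your bookkeeping is slightly cleaner than the paper's, since you correctly derive $\|\psi_1-\psi_2\|_N\neq 0$ from the contradiction hypothesis $\|\phi-\tilde{z}_{l+1}\|_N\neq 0$, whereas the paper attributes it (inaccurately) to $d\neq 0$.
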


\begin{proof}
Define
$d := \|\phi - \tilde{e}_l^*\|_N$. By Lemma~\ref{lem_feature_space_dis}, we also have \( \|\tilde{z}_{l+1} - \tilde{e}_l^*\|_N = d \), and for all \( h \in \tilde{\mathcal{M}}_{l+1}(\Theta_{l+1,2}) \),
\begin{equation}
    \|h - \tilde{e}_l^*\|_N \geq d. \label{lem_unique_equ1}
\end{equation}

We consider two cases:

Case 1: \( d = 0 \). Then by the triangle inequality of the
semi-norm $\|\cdot\|_N$, we obtain
\[
\| \phi -\tilde{z}_{l+1}\|_N\leq \|\phi - \tilde{e}_l^*\|_N+\|\tilde{e}_l^*-\tilde{z}_{l+1}\|_N = 2d=0.
\]

Case 2: \( d \neq 0 \). 
Since both \( \phi \) and \( \tilde{z}_{l+1} \) belong to the linear space \( \tilde{\mathcal{M}}_{l+1}(\Theta_{l+1,2}) \), their convex combination also lies in this space. For any \( \mu \in (0,1) \), define
\[
h := \mu \phi + (1 - \mu) \tilde{z}_{l+1} \in \tilde{\mathcal{M}}_{l+1}(\Theta_{l+1,2}).
\]
Using \eqref{lem_unique_equ1}, we have
\begin{equation}
    \|h - \tilde{e}_l^*\|_N = \|\mu\phi + (1-\mu)\tilde{z}_{l+1} - \tilde{e}_l^*\|_N \geq d. \label{lem_unique_equ2}
\end{equation}

Define normalized vectors:
\[
\psi_1 := \frac{\phi - \tilde{e}_l^*}{d}, \quad \psi_2 := \frac{\tilde{z}_{l+1} - \tilde{e}_l^*}{d},
\]
so that \( \|\psi_1\|_N = \|\psi_2\|_N = 1 \). We then observe 
$$
\frac{1}{d}[(\mu\phi+(1-\mu)\tilde{z}_{l+1})-\tilde{e}_l^*]=\mu\psi_1+(1-\mu)\psi_2, \ \ \mbox{for all}\ \ \mu\in (0,1).
$$
Using the equation above, inequality \eqref{lem_unique_equ2} becomes:
\[
\|\mu \psi_1 + (1 - \mu) \psi_2\|_N \geq 1.
\]
Moreover, we have $\|\psi_1-\psi_2\|_N\neq 0$ since $d\neq 0$. This contradicts the strict convexity of the semi-norm \( \|\cdot\|_N \) (Lemma~\ref{prop_strictly_convex}), which implies:
\[
\|\mu \psi_1 + (1 - \mu) \psi_2\|_N < 1, \quad \text{for all } \mu \in (0,1).
\]
Thus, the desired result follows.
\end{proof}

At this point, we are ready to prove that the sequence $\|\tilde{e}_l^*\|_{N}$, for $l=1,2,\dots, L$, is non-increasing.

\begin{proposition}\label{thm_e_j}
For each for $l=1,2,\dots, L-1$, the optimal error satisfies
\[
\|\tilde{e}^*_{l+1}\|_{N} \leq \|\tilde{e}^*_{l}\|_{N}.
\]
Moreover, equality holds if and only if $\|\tilde{z}_{l+1}\|_{N} = 0$; that is,
\[
\|\tilde{e}^*_{l+1}\|_{N} = \|\tilde{e}^*_{l}\|_{N} \quad \Longleftrightarrow \quad \|\tilde{z}_{l+1}\|_{N} = 0.
\]
\end{proposition}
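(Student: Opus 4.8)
The plan is to reduce everything to the best-approximation property already secured in Lemma~\ref{lem_feature_space_dis}, together with the near-uniqueness statement of Lemma~\ref{lem_unique}. The first step is to record the basic recursion: combining the definition of the grade-$(l+1)$ error function in \eqref{def_e_q+1_dis} with the optimal error in \eqref{def_fin_e_q+1_dis} and the definition of $\tilde{z}_{l+1}$ in \eqref{def_tilde_z_l}, I obtain
\[
\tilde{e}_{l+1}^* = \tilde{e}_l^* - \tilde{z}_{l+1}.
\]
Since $\tilde{\mathcal{M}}_{l+1}(\Theta_{l+1,2})$ is a linear space, it contains $0$, and by Lemma~\ref{lem_feature_space_dis} the element $\tilde{z}_{l+1}$ is a best approximation to $\tilde{e}_l^*$ from this space in the $\|\cdot\|_N$ semi-norm. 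Hence $\|\tilde{e}_l^* - \tilde{z}_{l+1}\|_N \le \|\tilde{e}_l^* - 0\|_N$, which upon substituting the recursion yields the desired inequality $\|\tilde{e}_{l+1}^*\|_N \le \|\tilde{e}_l^*\|_N$.

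For the equivalence, the reverse implication is the easy direction: if $\|\tilde{z}_{l+1}\|_N = 0$, then the triangle inequality for the semi-norm forces $\|\tilde{e}_l^* - \tilde{z}_{l+1}\|_N = \|\tilde{e}_l^*\|_N$, i.e.\ equality. For the forward implication, I would observe that equality $\|\tilde{e}_{l+1}^*\|_N = \|\tilde{e}_l^*\|_N$ means $\|\tilde{e}_l^* - \tilde{z}_{l+1}\|_N = \|\tilde{e}_l^* - 0\|_N$, so $0$ attains the same minimal distance as $\tilde{z}_{l+1}$ and is therefore itself a best approximation to $\tilde{e}_l^*$ from $\tilde{\mathcal{M}}_{l+1}(\Theta_{l+1,2})$. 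Applying Lemma~\ref{lem_unique} with $\phi := 0$ then gives $\|0 - \tilde{z}_{l+1}\|_N = \|\tilde{z}_{l+1}\|_N = 0$, completing the equivalence.

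The main obstacle, and the reason this argument cannot simply reproduce Proposition~\ref{thm_multi_con_e_j}, is the failure of uniqueness of the best approximation in the semi-norm setting. In the continuous case the strict convexity of the $L_2$ norm makes $z_{l+1}$ the \emph{unique} minimizer, so $z_{l+1} \neq 0$ at once yields strict inequality. Here $\|\cdot\|_N$ is only a semi-norm, so distinct elements can be indistinguishable and a best approximation need not be unique. This is precisely why the forward direction cannot conclude $\tilde{z}_{l+1} = 0$ but only $\|\tilde{z}_{l+1}\|_N = 0$; Lemma~\ref{lem_unique}, which rests on the strict convexity of $\|\cdot\|_N$ established in Lemma~\ref{prop_strictly_convex}, is exactly the tool that replaces uniqueness and makes the equality characterization go through.
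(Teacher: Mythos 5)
Your proof is correct and follows essentially the same route as the paper's: the recursion $\tilde{e}_{l+1}^* = \tilde{e}_l^* - \tilde{z}_{l+1}$, the best-approximation property from Lemma~\ref{lem_feature_space_dis} with the zero function as competitor, and Lemma~\ref{lem_unique} applied to $\phi := 0$ for the equality case. In fact you are slightly more complete than the paper, which proves only the forward implication of the equivalence and leaves the reverse direction (via the triangle inequality, as you argue) implicit.
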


\begin{proof}
Fix $l=1,2,\dots, L-1$. From the definition of $\tilde{e}_{l+1}^*$ in \eqref{def_fin_e_q+1_dis} and of $\tilde{z}_{l+1}$ in \eqref{def_tilde_z_l}, we have
\[
\tilde{e}_{l+1}^* = \tilde{e}_l^* - \tilde{z}_{l+1}.
\]
By Lemma~\ref{lem_feature_space_dis}, $\tilde{z}_{l+1}$ is the best approximation to $\tilde{e}_l^*$ from the space $\tilde{\mathcal{M}}_{l+1}(\Theta_{l+1, 2})$. Since the zero function belongs to this space, we obtain
\[
\|\tilde{e}_{l+1}^*\|_N = \|\tilde{e}_l^* - \tilde{z}_{l+1}\|_N \leq \|\tilde{e}_l^* - 0\|_N = \|\tilde{e}_l^*\|_N,
\]
which proves the first assertion.

To prove the second statement, suppose
\[
\|\tilde{e}_{l+1}^*\|_N = \|\tilde{e}_l^*\|_N.
\]
Then from the inequality above, equality must hold:
\[
\|\tilde{e}_l^* - \tilde{z}_{l+1}\|_N = \|\tilde{e}_l^* - 0\|_N.
\]
This means that both $\tilde{z}_{l+1}$ and $0$ are best approximations to $\tilde{e}_l^*$ in $\tilde{\mathcal{M}}_{l+1}(\Theta_{l+1,2})$. By Lemma~\ref{lem_unique}, their difference in the semi-norm $\|\cdot\|_N$ must be zero, so we have
\[
\|\tilde{z}_{l+1}\|_N =\|\tilde{z}_{l+1} - 0\|_N =  0.
\]
\end{proof}

Proposition~\ref{thm_e_j} extends Theorem 5 of \cite{Xu:2023aa}, originally formulated for function approximation, to the setting of numerically solving integral equations.
The second part of Proposition~\ref{thm_e_j} further reveals that if $\|\tilde{z}_{l+1}\|_N\neq 0$, then the training error \(\|\tilde{e}_{l+1}^*\|_N\) must strictly decrease. In other words, grade \(l+1\) contributes meaningfully to improving the solution. Conversely, if  \(\|\tilde{z}_{l+1}\|_N = 0\), then no further improvement is possible within the approximation space \(\tilde{\mathcal{M}}_{l+1}(\Theta_{l+1,2})\). 

\begin{remark}
This gives a practical diagnostic criterion: \emph{the effectiveness of the added grade can be judged by whether \(\tilde{z}_{l+1}\) is nonzero}. If the improvement is negligible (i.e., \(\|\tilde{z}_{l+1}\|_N \approx 0\)), then deeper grading may be unnecessary, and training can be stopped early.
\end{remark}

Next, we define the discrete MGDL approximation at grade $l$ by
\begin{equation}
    \tilde{y}^*_{l}:=\sum_{j=1}^{l} \mathcal{T} \tilde{\mathbf{f}}_j \in C(I), \quad l=1,2,\dots, L. \label{def_y_l_dis}
\end{equation}

The following lemma establishes a representation of the optimal error $\tilde{e}_l^*$ in terms of the true solution $y$ and the discrete approximation $\tilde{y}_l^*$.

\begin{lemma} \label{lem_tilde_e_l_star_dis}
For each $l \in \mathbb{N}_L$, the optimal error satisfies
\[
\tilde{e}_l^* = (\mathcal{I}- \mathcal{K}_{p_\kappa})(y - \tilde{y}_{l}^*) + (\mathcal{K}_{p_\kappa} - \mathcal{K})y.
\]
\end{lemma}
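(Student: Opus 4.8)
The plan is to mirror the inductive proof of Lemma~\ref{lem_e_l_star}, adapting it to account for the quadrature error introduced by replacing $\mathcal{K}$ with $\mathcal{K}_{p_\kappa}$. The essential new feature, compared with the continuous case, is the extra term $(\mathcal{K}_{p_\kappa} - \mathcal{K})y$, which records the discrepancy between the discrete and continuous operators acting on the true solution. I would argue by induction on $l$.

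For the base case $l = 1$, I first observe that evaluating the grade-1 error function at its optimal parameters gives $\tilde{e}_1^* = f - (\mathcal{I} - \mathcal{K}_{p_\kappa})\mathcal{T}\tilde{\mathbf{f}}_1$, since the network output at the optimal parameters coincides with the grade-1 solution component $\tilde{\mathbf{f}}_1$, and $\tilde{y}_1^* = \mathcal{T}\tilde{\mathbf{f}}_1$ by definition. Substituting the relation $f = (\mathcal{I} - \mathcal{K})y$ satisfied by the true solution yields $\tilde{e}_1^* = (\mathcal{I} - \mathcal{K})y - (\mathcal{I} - \mathcal{K}_{p_\kappa})\tilde{y}_1^*$. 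The crucial algebraic step is to manufacture the quadrature term by adding and subtracting $\mathcal{K}_{p_\kappa}y$: writing $(\mathcal{I} - \mathcal{K})y = (\mathcal{I} - \mathcal{K}_{p_\kappa})y + (\mathcal{K}_{p_\kappa} - \mathcal{K})y$ and regrouping gives precisely the claimed identity for $l = 1$.

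For the inductive step, I would use the recursion $\tilde{e}_{l+1}^* = \tilde{e}_l^* - (\mathcal{I} - \mathcal{K}_{p_\kappa})\mathcal{T}\tilde{\mathbf{f}}_{l+1}$, which follows from the definition of $\tilde{e}_{l+1}$ in \eqref{def_e_q+1_dis} evaluated at its optimal parameters. Substituting the induction hypothesis for $\tilde{e}_l^*$ and invoking the linearity of $\mathcal{I} - \mathcal{K}_{p_\kappa}$, the quadrature error term $(\mathcal{K}_{p_\kappa} - \mathcal{K})y$ passes through unchanged, while the remaining contributions combine into $(\mathcal{I} - \mathcal{K}_{p_\kappa})(y - \tilde{y}_l^* - \mathcal{T}\tilde{\mathbf{f}}_{l+1})$. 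Finally, the telescoping relation $\tilde{y}_{l+1}^* = \tilde{y}_l^* + \mathcal{T}\tilde{\mathbf{f}}_{l+1}$ implied by \eqref{def_y_l_dis} identifies $y - \tilde{y}_l^* - \mathcal{T}\tilde{\mathbf{f}}_{l+1}$ with $y - \tilde{y}_{l+1}^*$, closing the induction.

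There is no serious obstacle here; the only point requiring genuine care is the base case, where the quadrature term does not appear automatically and must be produced by the add-and-subtract of $\mathcal{K}_{p_\kappa}y$. In the inductive step the error term is inert, because the recursion contributes only operators of the form $\mathcal{I} - \mathcal{K}_{p_\kappa}$, so the discrete-versus-continuous mismatch $(\mathcal{K}_{p_\kappa} - \mathcal{K})y$ is never reintroduced and simply propagates as a constant across grades.
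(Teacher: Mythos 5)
Your proposal is correct and follows essentially the same route as the paper's proof: establish the $l=1$ identity by substituting $f=(\mathcal{I}-\mathcal{K})y$ and inserting $\pm\mathcal{K}_{p_\kappa}y$, then propagate it through the recursion $\tilde{e}_{l+1}^* = \tilde{e}_l^* - (\mathcal{I}-\mathcal{K}_{p_\kappa})\mathcal{T}\tilde{\mathbf{f}}_{l+1}$, with the quadrature term $(\mathcal{K}_{p_\kappa}-\mathcal{K})y$ carried along unchanged. The only difference is presentational: you cast the argument as explicit induction, whereas the paper states the same telescoping as a recursive application of the grade-1 identity.
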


\begin{proof}
From the definition of $\tilde{e}_1^*$ in equation \eqref{def_fin_e1_dis}, we have
\begin{equation}
\tilde{e}_1^* = f - (\mathcal{I} -  \mathcal{K}_{p_\kappa})\mathcal{T} \tilde{\mathbf{f}}_1 = (\mathcal{I} -  \mathcal{K}_{p_\kappa})(y - \mathcal{T} \tilde{\mathbf{f}}_1) + (\mathcal{K}_{p_\kappa} - \mathcal{K})y. \label{e_1_star_equ1}
\end{equation}

For $l=1,2, \dots, L-1$, the recursive definition of $\tilde{e}_{l+1}^*$ from equation \eqref{def_fin_e_q+1_dis} gives
\[
\tilde{e}_{l+1}^* = \tilde{e}_l^* - (\mathcal{I} -  \mathcal{K}_{p_\kappa})\mathcal{T} \tilde{\mathbf{f}}_{l+1}.
\]
Applying this recursively and using equation \eqref{e_1_star_equ1}, we obtain
\[
\tilde{e}_l^* = (\mathcal{I} -  \mathcal{K}_{p_\kappa})\left(y - \sum_{j=1}^l \mathcal{T} \tilde{\mathbf{f}}_j\right) + (\mathcal{K}_{p_\kappa} - \mathcal{K})y.
\]
By the definition of $\tilde{y}_l^*$ in equation \eqref{def_y_l_dis}, this simplifies to the desired expression.
\end{proof}

We now present the discrete analogue of Theorem~\ref{the:bound_result1_con}, returning to the discrete system \eqref{discrete_operator_Equation}. When the quadrature nodes coincide with the collocation points (in this case, $p_\kappa = N-1$) and $\mathcal{K}_{p_\kappa}$ converges pointwise to $\mathcal{K}$, there exists, for sufficiently large $N$, an invertible matrix $\mathbf{M}_\kappa \in \mathbb{C}^{N \times N}$ satisfying
\begin{equation}\label{Discrete-Repsentationn}
\left[((\mathcal{I} - \mathcal{K}_{p_\kappa})h)(x_j) \right]_{j=1}^N = \mathbf{M}_\kappa \mathbf{v}_h,
\end{equation}
for any function $h \in C(I)$, where $\mathbf{v}_h := [h(x_j)]_{j=1}^N$.

Accordingly, the discrete integral equation \eqref{discrete_operator_Equation} reduces to the linear system
\begin{equation}\label{equ_dis}
\mathbf{M}_\kappa \mathbf{v}_y = \mathbf{v}_f,
\end{equation}
where $\mathbf{v}_f := [f(x_j)]_{j=1}^N$ and $\mathbf{v}_y := [y(x_j)]_{j=1}^N$.

% In the general case where the quadrature nodes do not necessarily coincide with the collocation points, \cite{jiang2024deep} shows that $\mathbf{M}_\kappa$ is invertible for almost every $\kappa > 0$. We henceforth assume invertibility without further comment.

%Consequently, when the continuous integral equation admits a unique solution, the discretized equation \eqref{equ_dis} will also admit a unique solution \cite{anselone1971collectively} for sufficiently large $p_\kappa$.

%Correspondingly, there exists a matrix $\mathbf{M}_\kappa\in \mathbb{C}^{N\times N}$ such that for all $\kappa\geqslant 1$ and for any function $h\in C(I)$, the following relation holds:
%\begin{equation}
%    [((\mathcal{I}-\mathcal{K}_{p_{_\kappa}})h)(x_j): j=1,2, \dots, N]^T=\mathbf{M}_\kappa \mathbf{v}_h,\label{dis_con_rel}
%\end{equation}
%where $\mathbf{v}_h:=[h(x_j)]_{j=1}^N$. 

%%%%%%%%%%%%%%%%%%

\begin{lemma} \label{dis_inv}
For any $h \in C(I)$, the following bounds hold:
\[
\|\mathbf{M}_\kappa^{-1}\|_2^{-1} \|h\|_N \leqslant \|(\mathcal{I} -  \mathcal{K}_{p_\kappa})h\|_N \leqslant \|\mathbf{M}_\kappa\|_2 \|h\|_N.
\]
\end{lemma}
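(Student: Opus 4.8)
The plan is to transfer the entire statement from the function-space semi-norm $\|\cdot\|_N$ to the Euclidean norm on $\mathbb{C}^N$, where it reduces to a pair of elementary matrix-norm inequalities for the invertible matrix $\mathbf{M}_\kappa$. The two ingredients I would use are the identity \eqref{relation_N_2}, which asserts that $\|g\|_N = \|\mathbf{v}_g\|_{\ell_2}/\sqrt{N}$ for every $g \in C(I)$, and the discrete representation \eqref{Discrete-Repsentationn}, which expresses the sampled values of $(\mathcal{I}-\mathcal{K}_{p_\kappa})h$ at the collocation nodes as $\mathbf{M}_\kappa \mathbf{v}_h$.

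First I would fix $h \in C(I)$ and apply \eqref{relation_N_2} to the function $(\mathcal{I}-\mathcal{K}_{p_\kappa})h$. Since the vector of its values at the nodes $\{x_j\}$ equals $\mathbf{M}_\kappa \mathbf{v}_h$ by \eqref{Discrete-Repsentationn}, this yields
\[
\|(\mathcal{I}-\mathcal{K}_{p_\kappa})h\|_N = \frac{\|\mathbf{M}_\kappa \mathbf{v}_h\|_2}{\sqrt{N}},
\]
while \eqref{relation_N_2} applied to $h$ itself gives $\|h\|_N = \|\mathbf{v}_h\|_2/\sqrt{N}$. After clearing the common factor $1/\sqrt{N}$, the claimed two-sided bound is therefore equivalent to
\[
\|\mathbf{M}_\kappa^{-1}\|_2^{-1}\,\|\mathbf{v}_h\|_2 \leqslant \|\mathbf{M}_\kappa \mathbf{v}_h\|_2 \leqslant \|\mathbf{M}_\kappa\|_2\,\|\mathbf{v}_h\|_2.
\]

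The upper estimate is simply submultiplicativity of the spectral norm, namely $\|\mathbf{M}_\kappa \mathbf{v}_h\|_2 \leqslant \|\mathbf{M}_\kappa\|_2 \|\mathbf{v}_h\|_2$. For the lower estimate I would invoke invertibility of $\mathbf{M}_\kappa$ to write $\mathbf{v}_h = \mathbf{M}_\kappa^{-1}(\mathbf{M}_\kappa \mathbf{v}_h)$ and apply submultiplicativity once more, obtaining $\|\mathbf{v}_h\|_2 \leqslant \|\mathbf{M}_\kappa^{-1}\|_2 \|\mathbf{M}_\kappa \mathbf{v}_h\|_2$, which rearranges to the desired bound. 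Substituting both inequalities back into the displayed equivalence completes the argument.

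There is no genuine obstacle in this lemma: the entire content lies in correctly invoking the sampling identity \eqref{relation_N_2} together with the matrix representation \eqref{Discrete-Repsentationn}. The one point I would state carefully is that $\mathbf{M}_\kappa$ is invertible, so that $\|\mathbf{M}_\kappa^{-1}\|_2$ is finite and the lower bound is meaningful; this is precisely the hypothesis recorded just above in the discussion surrounding \eqref{Discrete-Repsentationn} and \eqref{equ_dis}, which I would cite rather than reprove.
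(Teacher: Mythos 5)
Your proposal is correct and follows essentially the same route as the paper's proof: both reduce the semi-norm statement to the Euclidean setting via the identity \eqref{relation_N_2} and the matrix representation \eqref{Discrete-Repsentationn}, then conclude by submultiplicativity of the spectral norm together with invertibility of $\mathbf{M}_\kappa$. Your explicit step $\mathbf{v}_h = \mathbf{M}_\kappa^{-1}(\mathbf{M}_\kappa \mathbf{v}_h)$ merely spells out what the paper compresses into the phrase ``invertibility properties of the spectral norm,'' so there is no substantive difference.
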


\begin{proof}
For any $h \in C(I)$, let $\mathbf{v}_h := [h(x_j) : j \in \mathbb{N}_N]^T \in \mathbb{C}^N$. By relation~\eqref{relation_N_2} and the definition of $\mathbf{M}_\kappa$, we have
\begin{equation} \label{dis_inv_equ1}
    \|h\|_N = \frac{\|\mathbf{v}_h\|_{\ell_2}}{\sqrt{N}}, \qquad \|(\mathcal{I} -  \mathcal{K}_{p_\kappa})h\|_N = \frac{\|\mathbf{M}_\kappa \mathbf{v}_h\|_{\ell_2}}{\sqrt{N}}.
\end{equation}
Using the submultiplicativity and invertibility properties of the spectral norm, we obtain
\[
\|\mathbf{M}_\kappa^{-1}\|_2^{-1} \|\mathbf{v}_h\|_{\ell_2} \leqslant \|\mathbf{M}_\kappa \mathbf{v}_h\|_{\ell_2} \leqslant \|\mathbf{M}_\kappa\|_2 \|\mathbf{v}_h\|_{\ell_2}.
\]
Dividing all sides by $\sqrt{N}$ and applying~\eqref{dis_inv_equ1}, we obtain the desired estimate.
\end{proof}

We now present the discrete analogue of Theorem~\ref{the:bound_result1_con}.

\begin{theorem} \label{prop_error_multi_discrete}
For $l \in \mathbb{N}_{L-1}$, the following estimate holds:
\[
\|y - \tilde{y}_{l+1}^*\|_N \leq \mathrm{cond}(\mathbf{M}_\kappa)\|y - \tilde{y}_l^*\|_N + 2\|\mathbf{M}_\kappa^{-1}\|_2 \|(\mathcal{K}_{p_\kappa} - \mathcal{K})y\|_N,
\]
where $\mathrm{cond}(\mathbf{M}_\kappa)$ denotes the condition number of the matrix $\mathbf{M}_\kappa$.
\end{theorem}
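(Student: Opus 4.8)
The plan is to replicate the argument of Theorem~\ref{the:bound_result1_con} in the discrete semi-norm $\|\cdot\|_N$, with the one essential new ingredient being careful bookkeeping of the quadrature defect $(\mathcal{K}_{p_\kappa}-\mathcal{K})y$, which appears additively in the representation of $\tilde{e}_l^*$ furnished by Lemma~\ref{lem_tilde_e_l_star_dis}. Wherever this defect intrudes I would dispose of it by the triangle inequality.

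First I would establish a lower bound on the solution error at grade $l+1$. Applying the left inequality of Lemma~\ref{dis_inv} to $h:=y-\tilde{y}_{l+1}^*$ and then using Lemma~\ref{lem_tilde_e_l_star_dis} to write $(\mathcal{I}-\mathcal{K}_{p_\kappa})(y-\tilde{y}_{l+1}^*) = \tilde{e}_{l+1}^* - (\mathcal{K}_{p_\kappa}-\mathcal{K})y$ gives, after one triangle inequality,
\[
\|\mathbf{M}_\kappa^{-1}\|_2^{-1}\|y-\tilde{y}_{l+1}^*\|_N \le \|\tilde{e}_{l+1}^*\|_N + \|(\mathcal{K}_{p_\kappa}-\mathcal{K})y\|_N.
\]
Symmetrically, applying the right inequality of Lemma~\ref{dis_inv} to $h:=y-\tilde{y}_{l}^*$ and again invoking Lemma~\ref{lem_tilde_e_l_star_dis} to express $\tilde{e}_l^* = (\mathcal{I}-\mathcal{K}_{p_\kappa})(y-\tilde{y}_{l}^*) + (\mathcal{K}_{p_\kappa}-\mathcal{K})y$ yields, after a second triangle inequality,
\[
\|\tilde{e}_l^*\|_N \le \|\mathbf{M}_\kappa\|_2\,\|y-\tilde{y}_{l}^*\|_N + \|(\mathcal{K}_{p_\kappa}-\mathcal{K})y\|_N.
\]

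Next I would invoke the monotonicity of the training error from Proposition~\ref{thm_e_j}, namely $\|\tilde{e}_{l+1}^*\|_N \le \|\tilde{e}_l^*\|_N$, to splice the two bounds. Substituting the upper bound for $\|\tilde{e}_l^*\|_N$ into the lower bound for $\|y-\tilde{y}_{l+1}^*\|_N$ produces
\[
\|\mathbf{M}_\kappa^{-1}\|_2^{-1}\|y-\tilde{y}_{l+1}^*\|_N \le \|\mathbf{M}_\kappa\|_2\,\|y-\tilde{y}_{l}^*\|_N + 2\|(\mathcal{K}_{p_\kappa}-\mathcal{K})y\|_N,
\]
where the quadrature defect enters with multiplicity two, once from each of the two estimates. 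Multiplying through by $\|\mathbf{M}_\kappa^{-1}\|_2$ and identifying $\|\mathbf{M}_\kappa^{-1}\|_2\|\mathbf{M}_\kappa\|_2 = \mathrm{cond}(\mathbf{M}_\kappa)$ delivers the claimed estimate.

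The argument is essentially routine once Lemmas~\ref{dis_inv} and \ref{lem_tilde_e_l_star_dis} and Proposition~\ref{thm_e_j} are in hand; the only point demanding genuine care is the consistent accounting of the quadrature defect. Because the representation of Lemma~\ref{lem_tilde_e_l_star_dis} carries the additive term $(\mathcal{K}_{p_\kappa}-\mathcal{K})y$ and this term is used twice, with the triangle inequality oriented differently at the two grades, one must verify that both contributions carry the correct sign and coefficient so that they combine into the factor $2\|\mathbf{M}_\kappa^{-1}\|_2\|(\mathcal{K}_{p_\kappa}-\mathcal{K})y\|_N$ rather than partially cancelling. This additive perturbation is precisely the discrete mechanism that degrades the clean condition-number bound of Theorem~\ref{the:bound_result1_con} by a term proportional to the numerical integration error, so the estimate correctly collapses to the continuous one in the exact-quadrature limit $\mathcal{K}_{p_\kappa}\to\mathcal{K}$.
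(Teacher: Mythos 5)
Your proposal is correct and follows essentially the same route as the paper's proof: both rest on Lemma~\ref{lem_tilde_e_l_star_dis}, the monotonicity $\|\tilde{e}_{l+1}^*\|_N \le \|\tilde{e}_l^*\|_N$ from Proposition~\ref{thm_e_j}, two applications of the triangle inequality (producing the factor $2$ on the quadrature defect), and Lemma~\ref{dis_inv} followed by multiplication by $\|\mathbf{M}_\kappa^{-1}\|_2$. The only difference is cosmetic ordering—you derive two one-sided bounds and splice them via the monotonicity, whereas the paper substitutes the representations into the monotonicity inequality first and then applies the triangle inequality to both sides—but the inequalities combined are identical.
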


\begin{proof}
By Lemma~\ref{lem_tilde_e_l_star_dis}, for all $l \in \mathbb{N}_{L-1}$, we have
\[
\tilde{e}_l^* = (\mathcal{I} -  \mathcal{K}_{p_\kappa})(y - \tilde{y}_l^*) + (\mathcal{K}_{p_\kappa} - \mathcal{K})y,
\]
\[
\tilde{e}_{l+1}^* = (\mathcal{I} -  \mathcal{K}_{p_\kappa})(y - \tilde{y}_{l+1}^*) + (\mathcal{K}_{p_\kappa} - \mathcal{K})y.
\]
Substituting into the inequality $\|\tilde{e}_{l+1}^*\|_N \leq \|\tilde{e}_l^*\|_N$ (from Proposition~\ref{thm_e_j}) gives
\[
\|(\mathcal{I} -  \mathcal{K}_{p_\kappa})(y - \tilde{y}_{l+1}^*) + (\mathcal{K}_{p_\kappa} - \mathcal{K})y\|_N
\leq
\|(\mathcal{I} -  \mathcal{K}_{p_\kappa})(y - \tilde{y}_l^*) + (\mathcal{K}_{p_\kappa} - \mathcal{K})y\|_N.
\]
Applying the triangle inequality yields
\[
\|(\mathcal{I} -  \mathcal{K}_{p_\kappa})(y - \tilde{y}_{l+1}^*)\|_N \leq \|(\mathcal{I} -  \mathcal{K}_{p_\kappa})(y - \tilde{y}_l^*)\|_N + 2\|(\mathcal{K}_{p_\kappa} - \mathcal{K})y\|_N.
\]
Using Lemma~\ref{dis_inv} to bound the operator norm in terms of the matrix $\mathbf{M}_\kappa$, we obtain
\[
\|\mathbf{M}_\kappa^{-1}\|_2^{-1} \|y - \tilde{y}_{l+1}^*\|_N \leq \|\mathbf{M}_\kappa\|_2 \|y - \tilde{y}_l^*\|_N + 2\|(\mathcal{K}_{p_\kappa} - \mathcal{K})y\|_N.
\]
Multiplying both sides by $\|\mathbf{M}_\kappa^{-1}\|_2$ yields the result.
\end{proof}

Theorem~\ref{prop_error_multi_discrete} establishes that in the discrete (implementable) setting, the error at grade $l+1$ is bounded by the error at grade 
$l$, scaled by the condition number of the system matrix $\mathbf{M}_\kappa$, plus a term reflecting the quadrature error, that is, the accuracy of approximating $\mathcal{K}$ by $\mathcal{K}_{p_\kappa}$. 
This mirrors the continuous case and confirms that the practical MGDL algorithm retains its error-reduction behavior when discretized. When  $\mathbf{M}_\kappa$ is well-conditioned, as is typically the case \cite{Atkinson}, and the quadrature error is small, the approximation error decreases reliably with each added grade.

In the next section, we show that the quadrature error 
\begin{equation}\label{quadrature_error}
    R_{p_\kappa}:=\|(\mathcal{K}_{p_\kappa}-\mathcal{K})y\|_{N}
\end{equation}
can be made arbitrarily small by choosing sufficiently large $p_\kappa$. Therefore, under suitable conditions, the discrete MGDL model behaves similarly to its continuous counterpart in Theorem~\ref{the:bound_result1_con}.

The following theorem shows that minimizing the training error leads to a corresponding reduction in the solution error.

\begin{theorem} \label{prop_error_multi_discrete2}
For all $l\in \mathbb{N}_L$, the following estimate holds:
\begin{equation}
\|\mathbf{M}_\kappa\|_2^{-1} (\|\tilde{e}_l^*\|_{N}-R_{p_\kappa})\leqslant \|y-\tilde{y}_{l}^*\|_{N}\leqslant \|\mathbf{M}_\kappa^{-1}\|_2 (\|\tilde{e}_l^*\|_{N}+R_{p_\kappa}),\label{equ1}
\end{equation}
where  $R_{p_\kappa}:=\|(\mathcal{K}_{p_\kappa}-\mathcal{K})y\|_{N}$ is the quadrature error defined in \eqref{quadrature_error}.
\end{theorem}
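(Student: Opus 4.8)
The plan is to follow the template of the continuous result, Theorem~\ref{thm:bound_result2_con}, replacing the operator $\mathcal{I}-\mathcal{K}$ and the $L_2$ norm by the matrix $\mathbf{M}_\kappa$ and the discrete semi-norm $\|\cdot\|_N$, and to absorb the extra discretization term through a triangle-inequality argument. The two ingredients I would lean on are Lemma~\ref{lem_tilde_e_l_star_dis}, which supplies the decomposition
\[
\tilde{e}_l^* = (\mathcal{I}-\mathcal{K}_{p_\kappa})(y-\tilde{y}_l^*) + (\mathcal{K}_{p_\kappa}-\mathcal{K})y,
\]
and Lemma~\ref{dis_inv}, which gives the two-sided estimate $\|\mathbf{M}_\kappa^{-1}\|_2^{-1}\|h\|_N \leq \|(\mathcal{I}-\mathcal{K}_{p_\kappa})h\|_N \leq \|\mathbf{M}_\kappa\|_2\|h\|_N$ for every $h\in C(I)$.

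First I would fix $l\in\mathbb{N}_L$, set $h:=y-\tilde{y}_l^*$, and rearrange the decomposition of Lemma~\ref{lem_tilde_e_l_star_dis} into $(\mathcal{I}-\mathcal{K}_{p_\kappa})h = \tilde{e}_l^* - (\mathcal{K}_{p_\kappa}-\mathcal{K})y$. Taking $\|\cdot\|_N$ of both sides and applying the triangle inequality in its two forms yields the sandwich
\[
\|\tilde{e}_l^*\|_N - R_{p_\kappa} \;\leq\; \|(\mathcal{I}-\mathcal{K}_{p_\kappa})h\|_N \;\leq\; \|\tilde{e}_l^*\|_N + R_{p_\kappa},
\]
where $R_{p_\kappa}=\|(\mathcal{K}_{p_\kappa}-\mathcal{K})y\|_N$ as in \eqref{quadrature_error}.

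To obtain the upper bound on $\|y-\tilde{y}_l^*\|_N=\|h\|_N$, I would combine the left inequality of Lemma~\ref{dis_inv}, namely $\|\mathbf{M}_\kappa^{-1}\|_2^{-1}\|h\|_N \leq \|(\mathcal{I}-\mathcal{K}_{p_\kappa})h\|_N$, with the right-hand side of the sandwich, and then multiply through by $\|\mathbf{M}_\kappa^{-1}\|_2$. Symmetrically, for the lower bound I would use the right inequality of Lemma~\ref{dis_inv}, $\|(\mathcal{I}-\mathcal{K}_{p_\kappa})h\|_N \leq \|\mathbf{M}_\kappa\|_2\|h\|_N$, together with the left-hand side of the sandwich, and divide by $\|\mathbf{M}_\kappa\|_2$. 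Combining the two produces exactly the claimed estimate \eqref{equ1}.

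This argument is essentially mechanical, and I do not anticipate a genuine obstacle; the only delicate points are bookkeeping ones. One must pair the lower Lemma~\ref{dis_inv} bound with the upper sandwich bound for the upper estimate, and the reverse pairing for the lower estimate, so that the inequality directions compose correctly. One should also note that the lower bound remains valid even when $\|\tilde{e}_l^*\|_N - R_{p_\kappa}$ is negative, since $\|y-\tilde{y}_l^*\|_N\geq 0$ makes the inequality automatic in that case. Relative to the continuous Theorem~\ref{thm:bound_result2_con}, the sole new feature is the additive quadrature term $R_{p_\kappa}$, which enters precisely through the reverse triangle inequality.
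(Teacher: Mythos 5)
Your proposal is correct and follows essentially the same route as the paper's proof: both rely on the decomposition from Lemma~\ref{lem_tilde_e_l_star_dis}, apply the triangle inequality (and its reverse form) to bound $\|(\mathcal{I}-\mathcal{K}_{p_\kappa})(y-\tilde{y}_l^*)\|_N$ between $\|\tilde{e}_l^*\|_N \pm R_{p_\kappa}$, and then invoke the two-sided estimate of Lemma~\ref{dis_inv} with exactly the pairing of inequalities you describe. Your added remark that the lower bound holds trivially when $\|\tilde{e}_l^*\|_N - R_{p_\kappa}$ is negative is a small but correct observation the paper leaves implicit.
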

\begin{proof}
We first prove the upper bound in \eqref{equ1}. By Lemma~\ref{lem_tilde_e_l_star_dis}, we have
$$
(\mathcal{I}- \mathcal{K}_{p_\kappa})(y-\tilde{y}_{l}^*)=\tilde{e}_l^*-(\mathcal{K}_{p_\kappa}-\mathcal{K})y.
$$
Applying the triangle inequality gives
$$
\|(\mathcal{I}- \mathcal{K}_{p_\kappa})(y-\tilde{y}_{l}^*)\|_N\leqslant \|\tilde{e}_l^*\|_N+R_{p_\kappa}.  
$$
Using Lemma~\ref{dis_inv}, we obtain
$$
\|y-\tilde{y}_l^*\|_N\leq \|\mathbf{M}_\kappa^{-1}\|_2(\|\tilde{e}_l^*\|_N+R_{p_\kappa}),
$$
which proves the upper bound.

For the lower bound, again from Lemma~\ref{lem_tilde_e_l_star_dis} and the triangle inequality,
$$
\|\tilde{e}_l^*\|_N\leqslant  \|(\mathcal{I}- \mathcal{K}_{p_\kappa})(y-\tilde{y}_{l}^*)\|_N+R_{p_\kappa}.
$$
Applying Lemma \ref{dis_inv} again gives
$$ 
\|\tilde{e}_l^*\|_N\leqslant  \|\mathbf{M}_\kappa\|_2\|y-\tilde{y}_{l}^*\|_N+R_{p_\kappa},  
$$
which rearranges to the desired lower bound:
$$
\|y-\tilde{y}_l^*\|_N\geq \|\mathbf{M}_\kappa\|_2^{-1}(\|\tilde{e}_l^*\|_N-R_{p_\kappa}).
$$
\end{proof}

Theorem~\ref{prop_error_multi_discrete2} provides a key theoretical basis for the adaptive MGDL algorithm introduced in the next section. It shows that the solution error is tightly bounded above and below by the training error and the quadrature error. As a result, minimizing the training error reliably reduces the true solution error, assuming the quadrature error is small. This justifies using the training error as a proxy for solution accuracy, which is critical for adaptively increasing the number of grades in the MGDL model.

\section{Adaptive MGDL Algorithm}
\label{sec_adaptive}

In this section, we introduce the adaptive MGDL (AMGDL) algorithm for approximating the solution of the equation within a prescribed tolerance. Each grade consists of a shallow network with a single hidden layer. Unlike in Section~2, we do not predefine the maximum number $L$ of grades; instead, AMGDL automatically selects an optimal number. The integral operator $\mathcal{K}$ is approximated using the composite trapezoidal rule. We establish a theoretical result showing that the solution produced by AMGDL is guaranteed to lie within the specified tolerance, with computational time scaling linearly in the number of grades.

We begin by recalling the discrete oscillatory integral operator $\mathcal{K}_{p_\kappa}$ from our previous work~\cite{jiang2024deep}, which provides a numerical approximation of $\mathcal{K}$. The number of quadrature nodes is defined by
$p_\kappa := \lceil \gamma \kappa^\beta \rceil, \label{def_p_kappa}$
with parameters $\gamma$ and $\beta$ satisfying
\begin{equation}
\Gamma \geqslant 0, \quad \beta \geqslant 1, \quad \gamma \geqslant \Gamma + 3. \label{rule_beta_gamma}
\end{equation}
Using the composite trapezoidal rule, we define the discrete operator $\mathcal{K}_{p_\kappa}$ as follows: For any $F \in C(I)$ and $s \in I$,
\begin{align*}
    (\mathcal{K}_{p_\kappa} F)(s) := &\frac{h}{2} \left[ F(s_0) K(s, s_0)e^{i\kappa|s - s_0|} + 2 \sum_{j=0}^{p_\kappa} K(s,s_j)F(s_j) e^{i\kappa|s - s_j|}\right. \\
    & \left. + K(s, s_{p_\kappa}) F(s_{p_\kappa}) e^{i\kappa|s - s_{p_\kappa}|} \right], \label{def_mathcal_K_p}
\end{align*}
where $h := 2/p_\kappa$ and $s_j := -1 + jh$ for $j =0, 1,  \dots, p_\kappa$. 
The training data $\{(x_j, f(x_j)) : j \in \mathbb{N}_{N_\kappa}\}$ are sampled at collocation points on a uniform grid:
\[
x_j := -1 + \frac{2(j-1)}{N_\kappa - 1}, \quad j \in \mathbb{N}_{N_\kappa}, \quad N_\kappa := p_\kappa q + 1, \quad \text{for a fixed } q \in \mathbb{N}.
\]

The matrix representation of $(\mathcal{I} -  \mathcal{K}_{p_\kappa})$ on the training grid is given by~\cite{jiang2024deep}:
\begin{equation}
    \mathbf{M}_\kappa := \mathbf{I}_\kappa - \frac{ \mathbf{B}_\kappa}{p_\kappa} \in \mathbb{C}^{N_\kappa \times N_\kappa}, \label{def_M_kappa}
\end{equation}
where $\mathbf{I}_\kappa$ is the identity matrix and $\mathbf{B}_\kappa = (b_{j,l}(\kappa))_{j,l \in \mathbb{N}_{N_\kappa}}$ is defined by
\begin{equation*}
b_{j,l}(\kappa) :=
\begin{cases}
    K(x_j, x_l)\omega_\kappa^{|j-l|}, & \text{if } l = 1 \text{ or } l = N_\kappa, \\
    2K(x_j, x_l)\omega_\kappa^{|j-l|}, & \text{if } l = dq + 1 \text{ for } d \in \mathbb{N}_{p_\kappa - 1}, \\
    0, & \text{otherwise},
\end{cases}
\end{equation*}
with $\omega_\kappa := e^{\frac{2\kappa i}{qp_\kappa}}$.

%Theorems \ref{prop_error_multi_discrete} and \ref{prop_error_multi_discrete2} hold for $\kappa\in S()$. In \cite{jiang2024deep}, for the given $\mathbf{M}_\kappa$ in equation \eqref{def_M_kappa}, the set $S()$ is defined as follows.
%For $d\geqslant 4$, let $\mathbb{C}[x]^{d\times d}$ denote the set of $d\times d$ matrices with polynomial entries in $x\in \mathbb{C}$. For $\lambda\in \mathbb{C}$, define the matrix
%$$
%\mathbf{A}_d(x;\lambda):=(a_{j,k}(x;\lambda))_{j,k\in \mathbb{N}_d}\in \mathbb{C}[x]^{d\times d},
%$$
%where
%$$
%a_{j,l}(x;\lambda):=\begin{cases}
%\lambda- (d-1), &j=l\in \{1, d\}, \\
%\lambda-\frac{d-1}{2}, & j=l\in\{2,3,\dots, d-1\},\\
%\lambda x^{|j-l|}, & \mathrm{otherwise.}
%\end{cases}
%$$
%Then
%\begin{equation*}
%    S(\lambda):=\left\{\kappa \geqslant 1:  {\rm det}\left(\mathbf{A}_{p_{_\kappa}+1}\left(e^{\frac{2 \kappa}{p_{_\kappa}}i}; \lambda\right)\right)\neq 0\right\}. \label{def_s}
%\end{equation*}
%It has been proved that $S(\lambda)$ is dense in $[1, +\infty)$ (see Proposition 5.1 in \cite{jiang2024deep}). Hence, Theorems \ref{prop_error_multi_discrete} and \ref{prop_error_multi_discrete2} hold for almost every $\kappa\in[1,+\infty)$.

Next we present an error bound on $\|(\mathcal{K}_{p_\kappa}-\mathcal{K})y\|_{N}$. Following \cite{wang2015oscillation}, there exist functions $u_j$, $j\in \mathbb{N}_3$, 
satisfying 
\begin{equation}
|u_j^{(l)}(s)| \leqslant \tau , \quad \mbox{for all}\ \ s\in I, \  \ l\in \mathbb{Z}_{m+1}, \label{con_u}
\end{equation}
for some  $\tau>0$, such that the solution $y$ of equation \eqref{fredholm_equation_operator} belongs to $H^m_{\kappa, 0}(I)$. That is, 
\begin{equation*}
    y(s) = u_1(s) + u_2(s) e^{i\kappa s} + u_3(s) e^{-i\kappa s}, \quad s\in I. \label{solution}
\end{equation*}
Equation \eqref{con_u}, together with the smoothness of the kernel $K$ (specifically in its second variable), implies the existence of a constant $\tilde{\tau} > 0$ such that, for every $t \in I$, 
\begin{equation*}
|(K(t,\cdot)u_j)^{(l)}(s)| \leqslant \tilde{\tau} , \quad \mbox{for all}\ \ s\in I, \  \ l\in \mathbb{Z}_{m+1}, 
\end{equation*}
For the discrete integral operator $\mathcal{K}_{p_\kappa}$, Proposition 4.3 in \cite{jiang2024deep} states that
\begin{equation}\label{EError}
\|\mathcal{K}y-\mathcal{K}_{p_{_\kappa}} y\|_\infty \leqslant  \mathcal{E}(\gamma, \beta, \kappa, m),
\end{equation}
where
\begin{equation}\label{def:E}
\mathcal{E}(\gamma, \beta, \kappa, m):= \frac{132\tilde{\tau}}{5\gamma \kappa^\beta}+\frac{81\tilde{\tau}(\Gamma+3)^m}{5\gamma^m \kappa^{m(\beta-1)}}. 
\end{equation}
Since 
$$
\|\mathcal{K}y-\mathcal{K}_{p_{_\kappa}} y\|_{N_\kappa}\leqslant \|\mathcal{K}y-\mathcal{K}_{p_{_\kappa}} y\|_{\infty},
$$ 
by the definition of $\|\cdot\|_{N_\kappa}$, it follows that
\begin{equation}
R_{p_\kappa}=\|\mathcal{K}y-\mathcal{K}_{p_{_\kappa}} y\|_{N_\kappa} \leqslant   \mathcal{E}(\gamma, \beta, \kappa, m), \label{integration_error}
\end{equation}
where  $R_{p_\kappa}:=\|(\mathcal{K}_{p_\kappa}-\mathcal{K})y\|_{N}$ is the quadrature error defined in \eqref{quadrature_error}.

\begin{comment}
Further for the case $|\lambda|\in (0,\frac{1}{2})$, if the parameters are chosen to satisfying 
    \begin{equation}
   \Gamma\geqslant 0,  \quad    \beta\geqslant 1,\quad  \gamma > \max\left\{\Gamma+3, \frac{4|\lambda|^2}{1-4|\lambda|^2}\right\}, \quad  q\in \left[ 1, \frac{1}{4|\lambda|^2}-\frac{1}{\lceil \gamma\rceil}\right)\cap \mathbb{N}, \label{rule_beta_gamma_q}
    \end{equation} 
then Lemma 5.8 in \cite{jiang2024deep} shows that for any $\kappa \geqslant 1$, the matrix $\mathbf{M}_\kappa \in \mathbb{C}^{N_\kappa\times N_\kappa}$ as defined in equation \cref{def_M_kappa} is invertible, this is to say $S(\lambda)=[1, +\infty)$ in this case, and
\begin{equation}
        \left\| \mathbf{M}_\kappa^{-1} \right\|_2\leqslant \frac{1}{1-\eta}, \label{lemma_inverse_target}
\end{equation}
where
     \begin{equation}
    \eta:= 2|\lambda|\sqrt{q+\frac{1}{\lceil \gamma \rceil}}\in (0,1). \label{def_eta}
   \end{equation}

Meanwhile, combining \cref{prop_error_multi_discrete2} with equations \cref{integration_error}, \cref{lemma_inverse_target} leads to following corollary.

\begin{corollary} 
For the case $\lambda\in (0,\frac{1}{2})$, $\kappa\geqslant 1$, 
\begin{equation*}
\|y-\tilde{y}^*_l\|_{N_\kappa} \leqslant \frac{1}{1-\eta}\left( \|\tilde{e}^*_l\|_{N_\kappa}+\frac{132\tau|\lambda|}{5\gamma \kappa^\beta}+\frac{81\tau|\lambda|(\Gamma+3)^m}{5\gamma^m \kappa^{m(\beta-1)}}\right), \quad l\in \mathbb{N}_L, 
\end{equation*}
where $\eta\in(0,1)$ as defined in equation \cref{def_eta} is independent of the wavenumber $\kappa$.
\end{corollary}
\end{comment}

By analyzing the definition of
$\mathcal{E}(\gamma, \beta, \kappa, m)$, it is evident that
%Firstly, by examining the equation \cref{integration_error}, it becomes evident that 
when $\beta$ and $\gamma$ are sufficiently large, the integration error $R_{p_\kappa}$ can be made arbitrarily small. Furthermore, from equation \eqref{equ1}  in Theorem \ref{prop_error_multi_discrete2}, it follows that a small integration error ensures that a small training error $\|e^*_l\|_N$ directly translates to a small overall solution error $\|y-\tilde{y}^*_l\|_N$. 

Our objective is to develop an adaptive multi-grade method that maintains the solution error $\|y-\tilde{y}^*_l\|_N$ within a predefined tolerance $\epsilon>0$. However, since this error cannot be directly observed, we instead rely on the training error $\|e^*_l\|_N$, which is directly computable. By monitoring $\|e^*_l\|_N$, we can determine an appropriate stopping criterion for the multi-grade learning process. This adaptive strategy enhances practical implementation and ensures the efficient convergence of the algorithm. The adaptive algorithm is described as follows.

\begin{algorithm}[H]
\caption{Adaptive Multi-Grade Deep Learning for Oscillatory Fredholm Integral Equation with an Error Tolerance}
\label{alg:mgdl}

\textbf{Require:} Fredholm integral equation \eqref{fredholm_equation_operator}, error tolerance $\epsilon > 0$, number of maximum grades $L\in \mathbb{N}$, parameters $\Gamma, \beta, \gamma$ satisfying condition \eqref{rule_beta_gamma} and $q\in \mathbb{N}$.

\textbf{Procedure:}
\begin{algorithmic}[1]
    \State \textbf{Initialize:} Set grade index $l = 1$. Define network $\mathcal{N}_{2}$ with parameters $\{\mathbf{W}^l_j, \mathbf{b}^l_j\}_{j=1}^{2}$.
    \State Solve the minimization problem \cref{multi_opt_problem_dis_grade_1} to obtain $\tilde{\mathbf{f}}_l$ with error $\tilde{e}_l^*$.
    \While{$l < L$ and $\|\tilde{e}_l^*\|_N > \frac{\epsilon}{2\|\mathbf{M}^{-1}_\kappa\|_2}$}
        \State Increment grade: $l \gets l+1$.
        \State Initialize network $\mathcal{N}_{2}$ with parameters $\{\mathbf{W}^l_j, \mathbf{b}^l_j\}_{j=1}^{2}$.
        \State Solve the minimization problem \eqref{mult_weight_dis} to obtain $\tilde{\mathbf{f}}_l$ with error $\|\tilde{e}_l^*\|_N$.
    \EndWhile
    \State \textbf{Return:} $\tilde{y}^*_l := \sum_{j=1}^{l} \mathcal{T} \tilde{\mathbf{f}}_j$.
\end{algorithmic}
\end{algorithm}

In the algorithm, the number $L$ differs from that in the traditional SGL model, where it denotes the parameter such that $\sum_{j=1}^L n_j$ gives the total number of hidden layers in the network. Here, $L$ instead represents the maximum number of allowed grades, and in cases of slow convergence, it also serves as part of the stopping criterion.

Note that each grade in Algorithm \ref{alg:mgdl} corresponds to a shallow network $\mathcal{N}_2$ with a single hidden layer; that is, $n_j:=2$ for all $j$. In \cite{fangXu2025}, building on the result of \cite{Pilanci2020}, it was shown that MGDL, when each grade consists of a one-hidden-layer ReLU network, reduces to solving a sequence of convex sub-problems, one for each grade. In a similar way, we can establish that the optimization problems \eqref{multi_opt_problem_dis_grade_1} and \eqref{mult_weight_dis} are equivalent to convex optimization problems when ReLU activation is employed. This reveals another important advantage of AMGDL: the resulting convex problems can be efficiently solved by gradient descent with guaranteed convergence.

The following theorem establishes that the approximate solution produced by Algorithm \ref{alg:mgdl} satisfies the prescribed error tolerance.

\begin{comment}

To analyze this algorithm, we combine \cref{prop_error_multi_discrete2} with equation \eqref{integration_error} to establish the following theorem.

\begin{theorem} \label{prop_error_adaptive}
For each $\kappa\in S(\lambda)$, if the adaptive multi-grade learning method ends at $l^*$ and if $|\lambda|\mathcal{E}(\gamma,\beta,\kappa,m)\leqslant\|\tilde{e}_l^*\|_N$, then 
\begin{equation*}
\|y-\tilde{y}_{l^*}^*\|_{N}\leqslant \epsilon.
\end{equation*}
\end{theorem}
\begin{proof}
By Theorem \ref{prop_error_multi_discrete2} and the assumptions of this theorem, the approximate solution $\tilde{y}_{l}^*$ generated by Algorithm \ref{alg:mgdl} satisfies the following estimate
\begin{equation*}
\|y-\tilde{y}_{l}^*\|_{N}\leqslant \|\mathbf{M}_\kappa^{-1}\|_2 (\|\tilde{e}_l^*\|_{N}+|\lambda|\mathcal{E}(\gamma,\beta,\kappa,m))\leqslant 2\|\mathbf{M}_\kappa^{-1}\|_2 \|\tilde{e}_l^*\|_{N}\leqslant \epsilon,
\end{equation*}
proving the desired result.
\end{proof}
\end{comment}

\begin{theorem} \label{prop_error_adaptive}
If the discrete matrix $\mathbf{M}_\kappa$ is invertible, and suppose the adaptive multi-grade deep learning algorithm terminates at iteration $l^*$ such that
\begin{equation}\label{Tolerance}
    \|\tilde{e}_{l^*}^*\|_N \leq \frac{\epsilon}{2\|\mathbf{M}^{-1}_\kappa\|_2}
\end{equation}
for a given tolerance $\epsilon>0$. If the quadrature error bound satisfies
\[
\, \mathcal{E}(\gamma, \beta, \kappa, m) \leq \|\tilde{e}_{l^*}^*\|_N,
\]
 then the resulting approximate solution $\tilde{y}_{l^*}^*$ satisfies
\[
\|y - \tilde{y}_{l^*}^*\|_N \leq \epsilon.
\]
\end{theorem}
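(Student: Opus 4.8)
The plan is to combine the upper bound from Theorem~\ref{prop_error_multi_discrete2} with the quadrature error estimate \eqref{integration_error} and the two hypotheses of the theorem. Since $\mathbf{M}_\kappa$ is assumed invertible, Theorem~\ref{prop_error_multi_discrete2} applies at grade $l^*$ and yields the upper estimate
$$
\|y-\tilde{y}_{l^*}^*\|_N \leq \|\mathbf{M}_\kappa^{-1}\|_2\big(\|\tilde{e}_{l^*}^*\|_N + R_{p_\kappa}\big),
$$
where $R_{p_\kappa} = \|(\mathcal{K}_{p_\kappa}-\mathcal{K})y\|_N$ is the quadrature error. The first step is therefore to replace the unobservable quantity $R_{p_\kappa}$ by a computable one tied to the training error.

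First I would invoke the quadrature bound \eqref{integration_error}, namely $R_{p_\kappa} \leq \mathcal{E}(\gamma,\beta,\kappa,m)$, together with the standing hypothesis $\mathcal{E}(\gamma,\beta,\kappa,m) \leq \|\tilde{e}_{l^*}^*\|_N$. Chaining these two inequalities gives $R_{p_\kappa} \leq \|\tilde{e}_{l^*}^*\|_N$, so that the bracketed sum collapses to $\|\tilde{e}_{l^*}^*\|_N + R_{p_\kappa} \leq 2\|\tilde{e}_{l^*}^*\|_N$. Substituting this back into the estimate above produces
$$
\|y-\tilde{y}_{l^*}^*\|_N \leq 2\,\|\mathbf{M}_\kappa^{-1}\|_2\,\|\tilde{e}_{l^*}^*\|_N.
$$

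Finally I would apply the termination condition \eqref{Tolerance}, which guarantees $\|\tilde{e}_{l^*}^*\|_N \leq \epsilon/(2\|\mathbf{M}_\kappa^{-1}\|_2)$. Inserting this into the last display cancels the factor $2\|\mathbf{M}_\kappa^{-1}\|_2$ exactly and yields $\|y-\tilde{y}_{l^*}^*\|_N \leq \epsilon$, as claimed. The argument is a short chain of inequalities rather than a substantive proof, so there is no genuine analytic obstacle; the only point requiring care is the bookkeeping. Specifically, one must observe that the hypothesis $\mathcal{E} \leq \|\tilde{e}_{l^*}^*\|_N$ is precisely what allows the quadrature contribution to be absorbed as a second copy of the training error, and that the factor $2\|\mathbf{M}_\kappa^{-1}\|_2$ appearing in the stopping threshold \eqref{Tolerance} is deliberately calibrated to match this doubling, so that the two constants cancel cleanly to give the tolerance $\epsilon$.
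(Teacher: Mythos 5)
Your proposal is correct and follows essentially the same route as the paper's proof: apply the upper bound of Theorem~\ref{prop_error_multi_discrete2}, use \eqref{integration_error} together with the hypothesis $\mathcal{E}(\gamma,\beta,\kappa,m) \leq \|\tilde{e}_{l^*}^*\|_N$ to absorb the quadrature term as a second copy of the training error, and then invoke the stopping criterion \eqref{Tolerance} to cancel the factor $2\|\mathbf{M}_\kappa^{-1}\|_2$. The only (harmless) difference is that you make explicit the intermediate step $R_{p_\kappa} \leq \mathcal{E}(\gamma,\beta,\kappa,m)$, which the paper folds silently into its first displayed inequality.
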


\begin{proof}
By Theorem~\ref{prop_error_multi_discrete2}, the error between the true solution $y$ and the approximate solution $\tilde{y}_{l^*}^*$ is bounded by
\[
\|y - \tilde{y}_{l^*}^*\|_N \leq \|\mathbf{M}_\kappa^{-1}\|_2 \left( \|\tilde{e}_{l^*}^*\|_N + \, \mathcal{E}(\gamma, \beta, \kappa, m) \right).
\]
Since $ \mathcal{E}(\gamma, \beta, \kappa, m) \leq \|\tilde{e}_{l^*}^*\|_N$, we have
\[
\|y - \tilde{y}_{l^*}^*\|_N \leq 2 \|\mathbf{M}_\kappa^{-1}\|_2 \|\tilde{e}_{l^*}^*\|_N.
\]
Finally, by the stopping criterion \eqref{Tolerance}, it follows that
\[
\|y - \tilde{y}_{l^*}^*\|_N \leq \epsilon,
\]
which completes the proof.
\end{proof}

\noindent
\textbf{Remark.} In practical implementations of Algorithm~\ref{alg:mgdl}, the stopping rule based on the predefined error tolerance can be replaced with a simpler, more convenient criterion. Specifically, Proposition~\ref{thm_e_j} shows that the sequence of training errors $\|\tilde{e}_l^*\|_{N}$ is monotonically decreasing with respect to the grade index $l \in \mathbb{N}_L$. This monotonicity implies that if the training error does not decrease between two successive grades, i.e.,
\[
\|\tilde{e}_{l}^*\|_N = \|\tilde{e}_{l-1}^*\|_N,
\]
then further increasing the grade is unlikely to improve the approximation. Thus, the algorithm can be terminated when this condition is met. This alternative stopping rule is especially useful in practice, as it avoids the need to estimate or predefine the norm of the inverse matrix $\|\mathbf{M}_\kappa^{-1}\|_2$ and the error tolerance $\epsilon$.

%%%%%%%%%%%%%%
To close this section, we show that the computational cost of the AMGDL Algorithm grows linearly with the number of grades. Let $\tilde{\mathbf{f}}_l$ be a DNN of $l$ grades trained using Algorithm \ref{alg:mgdl} and assume that the widths of the grades of the network trained are bounded by a positive constant $\rho$. Denote by ${\mathcal{T}}(\tilde{\mathbf{f}}_l)$ the total training time for $\tilde{\mathbf{f}}_l$, and let $c(p)$ be the computing time required to solve the optimization problem for a network of width $p$. 

\begin{theorem}\label{Training_Time}
Suppose $\tilde{\mathbf{f}_l}$ is a DNN with $l$ grades trained by Algorithm \ref{alg:mgdl}, with grade widths bounded by a positive constant $\rho$, and let $c(p)$ denote the computing time for solving the optimization problem for a network of width $p$. Then the total training time satisfies
\begin{equation}\label{TTT}
\mathcal{T}(\tilde{\mathbf{f}}_l)\leq c(\rho)l.
\end{equation}
\end{theorem}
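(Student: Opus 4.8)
The plan is to exploit the incremental, grade-by-grade structure of Algorithm~\ref{alg:mgdl}, which is precisely what distinguishes MGDL from end-to-end training. The central observation is that the total training time decomposes additively over grades, and that each individual grade is trained by solving a single fixed-size optimization problem whose cost does not grow with the grade index. Once these two facts are in place, the linear-in-$l$ bound is immediate.

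First I would record the additive decomposition. Since Algorithm~\ref{alg:mgdl} processes grades strictly sequentially—never revisiting or retraining a grade once its parameters have been frozen—the total training time satisfies $\mathcal{T}(\tilde{\mathbf{f}}_l) = \sum_{k=1}^{l} t_k$, where $t_k$ denotes the time spent solving the optimization problem at grade $k$, namely problem~\eqref{multi_opt_problem_dis_grade_1} for $k=1$ and problem~\eqref{mult_weight_dis} for $k>1$.

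Next I would bound each $t_k$ uniformly in $k$. The key point is that when grade $k$ is trained, both the previous feature $\tilde{\mathbf{g}}_{k-1}$ and the previous optimal error $\tilde{e}_{k-1}^*$ are already fixed; their values at the collocation points $\{x_j\}$ are evaluated once and stored, rather than recomputed during the iterations of grade $k$. Consequently, the optimization at grade $k$ varies only the parameters of the single new shallow subnetwork $\mathcal{N}_2$, whose input is the precomputed vector $\tilde{\mathbf{g}}_{k-1}(x_j)$ of dimension at most $\rho$ and whose width is at most $\rho$. The application of the discrete operator $(\mathcal{I}-\mathcal{K}_{p_\kappa})$ reduces to multiplication by the fixed matrix $\mathbf{M}_\kappa$, whose size depends only on the discretization parameters $N_\kappa$ and $p_\kappa$, not on $k$. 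Hence every grade requires solving an optimization problem for a network of width at most $\rho$, and by the definition of $c$, together with the monotonicity of the training cost in the network width, we obtain $t_k \le c(\rho)$ for each $k$.

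Finally, combining the two facts gives $\mathcal{T}(\tilde{\mathbf{f}}_l) = \sum_{k=1}^{l} t_k \le \sum_{k=1}^{l} c(\rho) = c(\rho)\, l$, which is the claimed bound~\eqref{TTT}. The main obstacle is the uniform per-grade bound in the third step: one must argue carefully that no cost accumulates with $k$, i.e.\ that the frozen features and residuals are precomputed rather than recomputed at each iteration, and that the input dimension feeding each subnetwork stays bounded by $\rho$. This is exactly the structural advantage of the residual-based MGDL construction over end-to-end training, and once it is secured the linear conclusion follows with no further computation.
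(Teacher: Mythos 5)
Your proposal is correct and follows essentially the same route as the paper's proof: decompose the total training time additively over the $l$ grades, bound each grade's cost by $c(\rho)$ using the width bound (with the monotonicity of $c$ in the width, which the paper also uses implicitly), and sum. The additional discussion of precomputed features and the fixed matrix $\mathbf{M}_\kappa$ is a more careful justification of the uniform per-grade bound, but it does not change the structure of the argument.
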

\begin{proof}
The total training time is the sum of the times spent on the $l$ grades:
$$
{\mathcal{T}}(\tilde{\mathbf{f}}_l)=\sum_{j=1}^lc(p_j).
$$
Since the width $p_j$ of grade $j$ ($j=1,2,\dots, l$) is bounded by a positive constant $\rho$, it follows that 
\[
c(p_j)\leq c(\rho), \quad j=1,2,\dots,l.
\]  
Therefore,  
\[
{\mathcal{T}}(\tilde{\mathbf{f}}_l)\leq \sum_{j=1}^l c(\rho)=c(\rho)l,
\]  
which proves the result.
\end{proof}

The estimate \eqref{TTT} in Theorem \ref{Training_Time} is confirmed by the numerical examples presented in the next section.

\section{Numerical Experiments}
In this section, we present two numerical examples to illustrate the effectiveness of the proposed algorithm. The first example evaluates its ability to learn solutions with multiple oscillatory scales, while the second examines its performance on solutions that not only contain multiple oscillatory scales but also feature a singularity. In both cases, the solutions involve the highest wavenumber $\kappa = 500$. For each example, we assess performance using both equal-width and varying-width networks.

An extensive comparison of DNN-based methods for solving oscillatory integral equations with traditional piecewise polynomial collocation methods was conducted in \cite{jiang2024deep}. The results show that MGDL consistently outperforms both piecewise linear and quadratic polynomial collocation methods, which are standard tools in numerical analysis. Therefore, we do not repeat that comparison here. Instead, the numerical experiments in this paper focus on comparing MGDL with the single-grade deep learning (SGDL) model and validating the theoretical results established earlier.

All experiments were conducted on an Ubuntu Server 18.04 LTS (64-bit), equipped with an Intel Xeon Platinum 8255C CPU @ 2.5GHz and an NVIDIA A4000 GPU.

We compare the performance of the  SGDL model and the AMGDL model for solving the Fredholm integral equation \eqref{fredholm_equation_operator} with a specified right-hand side. The comparison is based on the relative $L_2$ error, defined by
\begin{equation*}
\frac{\|y-\tilde{y}\|_2}{\|y\|_2}\approx \left(\frac{|y(s_0)-\tilde{y}(s_0)|^2+2\sum_{j=1}^{l-1}|y(s_j)-\tilde{y}(s_j)|^2+|y(s_l)-\tilde{y}(s_l)|^2}{|y(s_0)|^2+2\sum_{j=1}^{l-1}|y(s_j)|^2+|y(s_l)|^2}\right)^{\frac{1}{2}} \label{relative_error}
\end{equation*}
where $y$ and $\tilde{y}$ denote the exact and approximate solutions, respectively. Here, we set $l := 20,000$ and define the evaluation points by $s_j := -1 + \frac{2j}{l}$ for $j \in \mathbb{Z}_{l+1}$.

The SGDL and AMGDL models are constructed following the procedures outlined in Sections~\ref{sec_single} and~\ref{sec_adaptive}, respectively. 
For the SGDL model, the training loss is defined as
\begin{equation*}
\frac{1}{N_\kappa}\sum_{l=1}^{N_\kappa} \left|f(x_l)- ((\mathcal{I}-\mathcal{K}_{p_{_\kappa}}) \mathcal{T}\mathcal{N}_n(\{\mathbf{W}_j, \mathbf{b}_j\}_{j=1}^n);\cdot)(x_l) \right|^2,
\end{equation*}
where $x_l$ are the training data points to be specified later. 

To evaluate generalization performance, all models are assessed using a validation loss defined as
\begin{equation*}
\frac{1}{2048}\sum_{l=1}^{2048} \left|f(x'_l)- (\mathcal{I}-\mathcal{K}_{p_{_\kappa}}) Y(x'_l) \right|^2,
\end{equation*}
where $Y:=\tilde{y}^*$ for SGL and $Y:=\tilde{y}_L^*$ for AMGDL, and $x_l'$ are a set of independently selected validation points. In the implementation of Algorithm~\ref{alg:mgdl}, this validation loss replaces the training error as a stopping criterion. The only difference lies in the evaluation points, which helps reduce overfitting and improves the robustness of the adaptive selection process.

We now describe the procedure used to generate the training and validation datasets.

\textbf{Training Data:} We generate the training dataset by selecting \( N_\kappa \) equidistant points \( \{x_j\}_{j=1}^{N_\kappa} \) in the interval \( I \). For each point \( x_j \), we compute the corresponding function value \( f(x_j) \), resulting in the training dataset
\[
\{(x_j, f(x_j))\}_{j=1}^{N_\kappa} \subset I \times \mathbb{C}.
\]

\textbf{Validation Data:} The validation dataset consists of 2,048 uniformly distributed points \( \{x'_j\}_{j=1}^{2048} \) over the interval \( I \). For each \( x'_j \), we compute \( f(x'_j) \), yielding the validation dataset
\[
\{(x'_j, f(x'_j))\}_{j=1}^{2048} \subset I \times \mathbb{C}.
\]

In all experiments, the activation function used in the hidden layers is consistently chosen as the sine function for both the SGDL and AMGDL models.

\vspace{10pt}

\noindent\textbf{Example 1:}  
This example is designed to evaluate the effectiveness of the proposed methods in learning solutions that contain multiple oscillatory scales, with the highest wavenumber $\kappa = 500$.

We consider the oscillatory Fredholm integral equation \eqref{fredholm_equation_operator} with kernel 
$$
K(s,t) = \cos(s(t+1)),\ \  s,t\in I
$$ 
and $\kappa = 500$. Here, the smooth kernel $K$ is non-separable. The exact solution is chosen as
\begin{align*}
    y(s) :=\ &e^s + (\sin(s)+s+1)e^{100is} + (\cos(s)+s^3)e^{-150i s} + |s|e^{-200is} \\
    &+ s^3e^{250is} + \cosh(s) e^{300is} + s^2e^{-350is} + \sinh(s)e^{400is} \\
    &+ (s^3+\sin(s))e^{450is} + |s|^3 e^{-500is}, \quad s\in I.
\end{align*}
Although the function $y$ does not strictly belong to the space $H^m_{\kappa,0}(I)$—due to its components oscillating at different frequencies—this design introduces richer multiscale features, making the learning task more challenging. Nonetheless, the deviation from $H^m_{\kappa,0}(I)$ does not significantly affect the theoretical analysis of the model's error. The right-hand side function $f$ in \eqref{fredholm_equation_operator} is computed accordingly based on this exact solution.

For the deep neural network implementation, we set $\Gamma = 2$, $\gamma = 8$, $\beta = 1$, and $q = 1$, in compliance with the condition \eqref{rule_beta_gamma}. As a result, we have $p_\kappa := \lceil \gamma \kappa^\beta \rceil = 4,000$.

The AMGDL algorithm allows for flexible neural network design, enabling various architectural variants. Among these, we explore two representative configurations: networks with uniform widths and those with variable widths.

\noindent\textbf{Equal-Width Networks:}  
In this configuration, we set the maximum number of grades to 12 and the minimum to 3. For each grade $j \in \mathbb{N}_{12}$, the neural network architecture is defined as follows:
\begin{center}
\text{Grade $j$:} \quad $[1] \to (j-1) \times [256]_F \to [256] \to [2]$,
\end{center}
where $(j-1) \times [256]_F$ denotes $j-1$ hidden layers with 256 neurons each, whose parameters are fixed (i.e., not updated) from previous grades. Each successive grade thus adds one new trainable layer of width 256 to the network, building upon the structure from the prior grade.

\medskip
We now describe the training and hyperparameter tuning strategies for AMGDL. Each model was trained for 4{,}000 epochs using the Adam optimizer with an exponentially decaying learning rate, starting from a selected initial value and decreasing to \(10^{-7}\). The initial learning rate was chosen from \(\{10^{-1}, 10^{-2}, 10^{-3}, 10^{-4}\}\), and the batch size from \(\{128, 256, 512\}\). The optimal hyperparameter combination was determined based on the best validation performance, averaged over three independent runs. Based on this criterion, an initial learning rate of \(10^{-2}\) and a batch size of 128 were selected.

%We now describe the training and hyperparameter tuning strategies for AMGDL. {\color{red} Each model was trained for 4,000 epochs using the Adam optimizer with an exponentially decaying learning rate, starting from an initial value and decreasing to $10^{-7}$.} The initial learning rate was selected from the set $\{10^{-1}, 10^{-2}, 10^{-3}, 10^{-4}\}$, and the batch size from $\{128, 256, 512\}$. The optimal combination of hyperparameters was determined based on the best validation performance averaged over three independent runs. The final choice was an initial learning rate of $10^{-2}$ and a batch size of 128.

\begin{table}[!ht]
\centering
\scalebox{0.9}{
\begin{tabular}{c||c|c|c|c}
\hline
 & Training loss & Validation loss & RE & Time (second) \\
\hline
Grade 1 & 3.73e+0 & 3.26e+0 & 8.86e-1 & 371 \\
Grade 2 & 1.51e+0 & 1.70e+0 & 6.53e-1 & 397\\
Grade 3 & 2.15e-7 & 2.77e-7 & 2.70e-4 & 389\\
Grade 4 & 5.88e-9 & 8.24e-9 & 8.91e-5 & 394 \\
Grade 5 & 3.04e-9 & 6.45e-9 & {\bf 8.67e-5} & 392 \\
Grade 6 & 2.88e-9 & {\bf 6.39e-9} & 8.69e-5 & 406 \\
Grade 7 & 2.84e-9 & 6.84e-9 & 8.96e-5 & 418 \\
\hline
\end{tabular}}
\caption{Performance metrics across different grades for AMGDL using equal-width networks under optimal hyperparameters.}
\label{tab:exp1_amgdl1}
\end{table}

The validation errors for each grade are reported in Table~\ref{tab:exp1_amgdl1} and visualized in Figure~\ref{fig:exp1_grade1}. As observed, the validation error decreases monotonically from grade 1 to grade 6 but increases at grade 7. Hence, according to the stopping criterion of the adaptive algorithm, training is terminated at grade 6.

\begin{figure}[H]
\centering
 \includegraphics[width=0.8\textwidth]{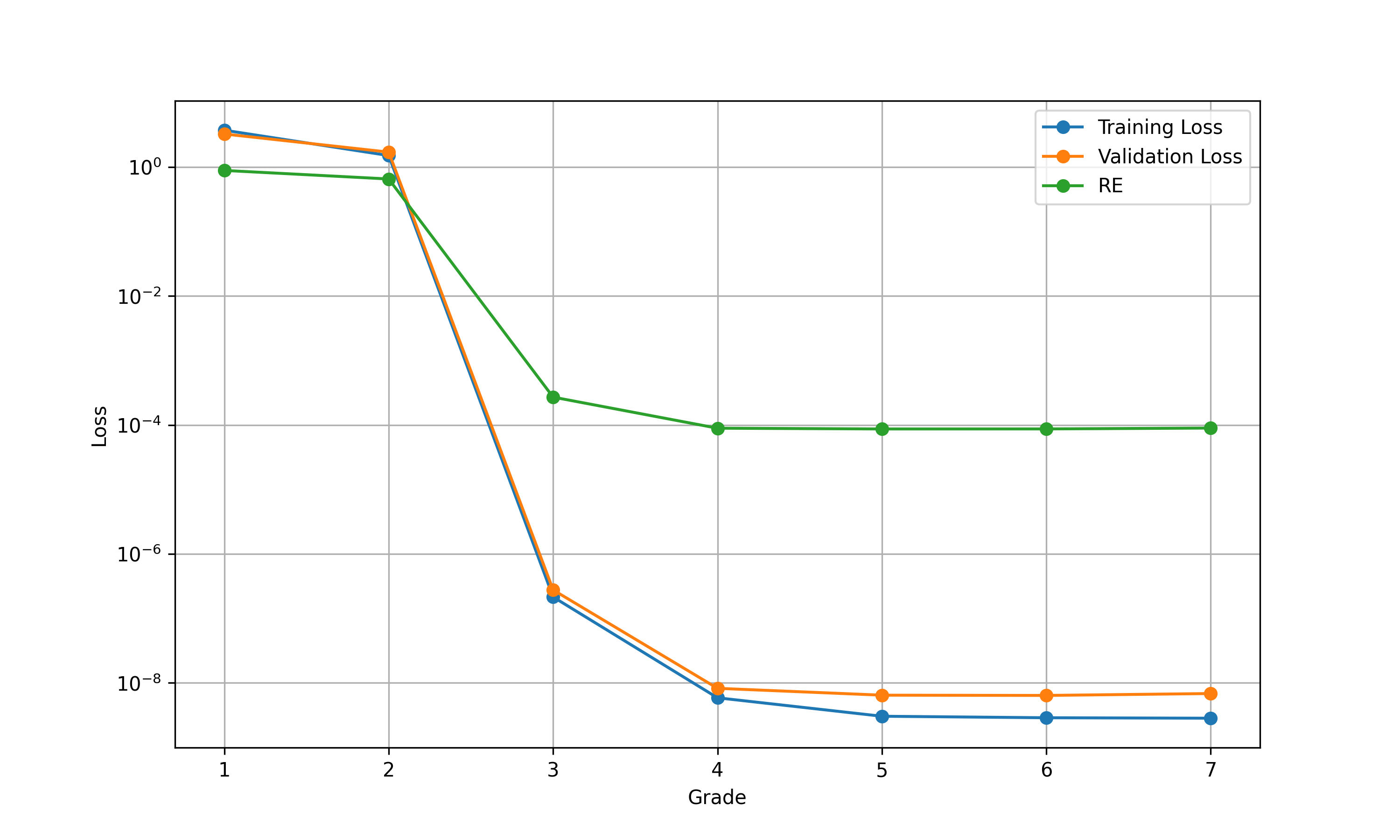}
     \caption{Training and validation losses for AMGDL using equal-width networks.}
     \label{fig:exp1_grade1}
\end{figure}

\Cref{fig:exp1_grade_freq1} shows how the AMGDL model with equal width improves approximation grade-by-grade looking from the frequency domain.
\begin{figure}[H]
\centering
 \includegraphics[width=0.6\textwidth]{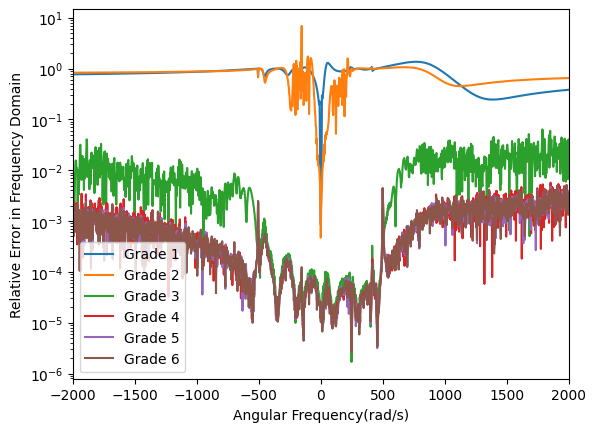}
     \caption{Frequency-domain relative errors for AMGDL  using equal-width networks: grades $1$–$7$.}
     \label{fig:exp1_grade_freq1}
\end{figure}

We compare the performance of the AMGDL model with that of the SGDL model. For each \( j \in \mathbb{N}_{10} \), the SGDL-\( j \) model uses a network architecture given by  
\[
[1] \to j \times [256] \to [2],
\]  
which corresponds to the grade-\( j \) network in the AMGDL model with equal width.

For training, each SGDL model was run for 4,000 epochs with an exponentially decaying learning rate, starting from an initial value and decreasing to \( 10^{-7} \). We tested initial learning rates from the set \( \{10^{-1}, 10^{-2}, 10^{-3}, 10^{-4}\} \) and batch sizes from \( \{128, 256, 512\} \). The best combination of hyperparameters was selected based on the lowest validation error achieved across five independent runs. The optimal hyperparameters and corresponding errors for the SGDL models are summarized in Table \ref{tab:exp1_sgl1}.

%We compare the AMGDL with the SGL model. For each $j \in \mathbb{N}_{10}$, the SGL-$j$ model has a network structure $[1] \to j\times [256] \to [2]$, which is simply the grade $j$ as in the AMGDL model with equal width. We used $4,000$ epochs and a learning rate that decreased exponentially from an initial rate to $10^{-7}$. We experimented with initial rates from the set $\{10^{-1}, 10^{-2}, 10^{-3}, 10^{-4}\}$ and batch sizes from the set $\{128, 256, 512\}$. We then chose the best pair of hyper-parameters based on the best performance across five independent experiments. After that, the best hyper-parameters and errors for the SGL models are listed in \cref{tab:exp1_sgl1}.   

\begin{table}[!ht]
\centering
\scalebox{0.9}{
\begin{tabular}{c||c|c||c|c|c}
\hline
Model & Batch size & Initial learning rate & Training loss & Validation loss & RE \\ \hline
SGDL-1 & 512 & 1e-2 & 3.24e-0 & 3.25e-0 & 8.86e-1 \\
SGDL-2 & 128 & 1e-2 & 1.42e-0 & 1.50e-0 & 6.12e-1 \\
SGDL-3 & 256 & 1e-2 & 1.14e-6 & 1.43e-6 & 5.85e-4 \\
SGDL-4 & 256 & 1e-2 & 4.77e-7 & {\bf 4.39e-7} & \textbf{3.37e-4} \\
SGDL-5 & 512 & 1e-2 & 8.19e-7 & 7.07e-7 & 4.40e-4 \\
SGDL-6 & 128 & 1e-3 & 1.27e-6 & 1.48e-6 & 6.30e-4 \\
SGDL-7 & 128 & 1e-3 & 8.02e-7 & 7.38e-7 & 4.51e-4 \\
SGDL-8 & 128 & 1e-3 & 4.34e-7 & 4.86e-7  & 3.64e-4 \\
SGDL-9 & 128 & 1e-3 & 4.13e-7 & 8.00e-7 & 5.46e-4 \\
SGDL-10 & 256 & 1e-3 & 7.65e-7 & 7.39e-7 & 4.57e-4 \\
SGDL-11 & 256 & 1e-3 & 4.81e-7 & 6.87e-7 & 5.17e-4 \\
SGDL-12 & 256 & 1e-3 & 5.46e-7 & 1.00e-6 & 7.11e-4 \\ \hline
\end{tabular}}
\caption{Optimal hyperparameters and corresponding errors for SGDL models using equal-width networks.}
\label{tab:exp1_sgl1}
\end{table}

\noindent\textbf{Varying-Width Networks:}  
We now present the example of the AMGDL algorithm using networks with varying widths. The same as in the equal-width case, we set the maximum number of grades to 12 and the minimum to 3. The network architecture at grade \( j \) is defined as follows:
\[
\text{Grade } j: \ [1] \to [a_1]_F \to [a_2]_F \to \dots \to [a_{j-1}]_F \to [a_j] \to [2], \ \text{for}\ j \in \mathbb{N}_{12},
\]
where \( a_j := 200 + 100\lceil j/2 \rceil \), for \( j \in \mathbb{N}_{12} \). Here, \([a_k]_F\) indicates that the layer of width \( a_k \) is fixed (i.e., frozen) from previous grades. Thus, at each successive grade, one new layer with increased width is added on top of the previously fixed layers.

The training and hyperparameter tuning strategy for the varying-width AMGDL model is identical to that used in the equal-width case. After evaluating multiple configurations, the optimal parameters were found to be a batch size of 256 and an initial learning rate of \( 10^{-2} \). The validation errors for each grade are reported in Table \ref{tab:exp1_amgdl2} and visualized in Figure \ref{fig:exp1_grade2}. As observed, the validation error decreases monotonically from grade 1 through grade 6, but increases at grade 7. Therefore, the algorithm terminates at grade 6.

\begin{table}[!ht]
\centering
\scalebox{0.9}{
\begin{tabular}{c||c|c|c|c}
\hline
 & Training loss & Validation loss & RE & Time (second) \\
\hline
Grade 1 & 2.99e+0 & 3.26e+0 & 8.86e-1 & 176 \\
Grade 2 & 2.23e+0 & 1.98e+0 & 7.03e-1 & 182\\
Grade 3 & 1.11e-5 & 1.09e-5 & 1.64e-3 & 206\\
Grade 4 & 9.45e-9 & 1.21e-8 & 8.76e-5 & 210 \\
Grade 5 & 6.58e-9 & 7.58e-9 & 7.84e-5 & 217 \\
Grade 6 & 2.70e-9 & \textbf{4.63e-9} & \textbf{7.48e-5} & 243 \\
Grade 7 & 2.90e-9 & 1.87e-7 & 3.26e-4 & 296 \\
\hline
\end{tabular}}
\caption{Validation errors across different grades for AMGDL using equal-width networks under optimal hyperparameters.}
\label{tab:exp1_amgdl2}
\end{table}

\begin{figure}[H]
\centering
 \includegraphics[width=0.5\textwidth]{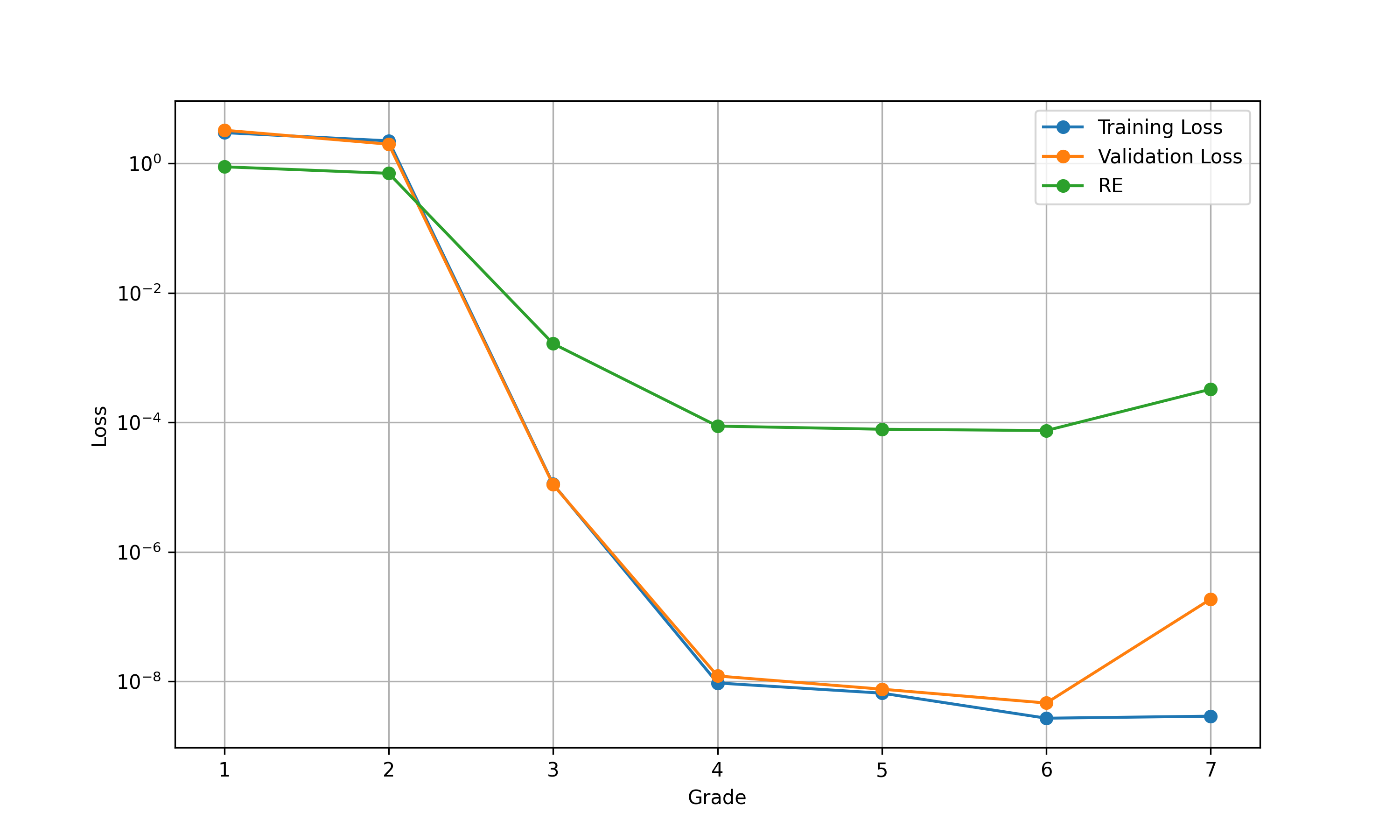}
     \caption{Training and validation losses for AMGDL using varying-width networks.}
     \label{fig:exp1_grade2}
\end{figure}

Next, we illustrate how the MGDL model improves approximation quality across grades in the frequency domain. Let $(\mathcal{F}v)(z)$ denote the fast Fourier transform (FFT) of a vector $v$, where $z$ is the angular frequency. To assess the accuracy of an approximate solution $Y$, we compute the relative error at each frequency $z \in \{\pi j - 10001\pi : j \in \mathbb{N}_{20001}\}$ as follows:
\begin{equation*}
    \frac{\left| \mathcal{F}([y(s_j)]_{j=1}^{20001})(z) - \mathcal{F}([Y(s_j)]_{j=1}^{20001})(z) \right|}{\left| \mathcal{F}([y(s_j)]_{j=1}^{20001})(z) \right|},
\end{equation*}
where $y$ is the exact solution, $s_j := -1 + \frac{2(j-1)}{20000}$ for $j \in \mathbb{N}_{20001}$, and $Y = \tilde{y}_l^*$ denotes the approximate solution obtained at grade $l$ in an $L$-grade AMGDL model. 

Figure \ref{fig:exp1_grade_freq2} visualizes how the AMGDL Algorithm with varying-width improves the approximation grade by grade from the frequency domain perspective.

\begin{figure}[H]
\centering
 \includegraphics[width=0.6\textwidth]{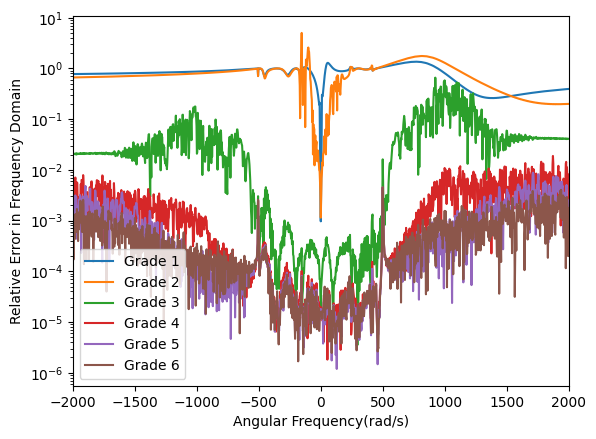}
 \caption{Frequency-domain relative errors for AMGDL  using varying-width networks: grades $1$–$8$.}
\label{fig:exp1_grade_freq2}
\end{figure}

We also compare the performance of the AMGDL model with that of the SGDL model. For each $j \in \mathbb{N}_{10}$, the SGDL-$j$ model employs a network architecture of the form:
\[
[1] \to [a_1] \to [a_2] \to \cdots \to [a_j] \to [2],
\]
where the width $a_k := 200 + 100\lceil k/2 \rceil$, for $k = 1, \dots, j$, matching the layer widths used in the AMGDL model with varying-width.

The training and hyperparameter tuning strategy is identical to that used for the equal-width SGDL models. The optimal hyperparameters and corresponding errors for the SGDL models are summarized in Table \ref{tab:exp1_sgl2}.

\begin{table}[!ht]
\centering
\scalebox{0.9}{
\begin{tabular}{c||c|c||c|c|c}
\hline
Model  & Batch size & Initial learning rate & Training loss & Validation loss & RE               \\ \hline
SGDL-1  & 512       & 1e-2                  & 3.29e-0       & 3.25e-0         & 8.86e-1          \\
SGDL-2  & 256         & 1e-1                 & 7.59e-6       &    7.52e-6      &  1.37e-3      \\ 
SGDL-3  & 256        & 1e-2                  & 4.73e-7       & 5.87e-7         & 3.73e-4          \\
SGDL-4  & 512        & 1e-2                  & 6.60e-7       & 6.25e-7         & 4.15e-4          \\
SGDL-5  & 512        & 1e-2                  & 4.72e-7       & 6.00e-7         & 4.00e-4          \\
SGDL-6  & 128        & 1e-3                  & 7.66e-7       & 5.23e-7         & 3.89e-4          \\
SGDL-7  & 128        & 1e-3                  & 2.98e-7       & {\bf 2.27e-7}         & \textbf{2.64e-4} \\
SGDL-8  & 256        & 1e-3                  & 5.01e-7       & 4.32e-7         & 3.47e-4          \\
SGDL-9  & 256       &   1e-3                 & 2.78e-7        & 2.67e-7         & 2.99e-4          \\
SGDL-10 & 256       &   1e-3                 &  1.50e-7      & 3.17e-7         &  3.87e-4          \\
SGDL-11  & 512      &  1e-3                 & 6.39e-7       & 5.67e-7       &  3.96e-4       \\
SGDL-12 & 512       &  1e-3                   & 3.41e-7         & 3.46e-7          & 3.12e-4           \\ 
\hline
\end{tabular}}
\caption{Optimal hyperparameters and corresponding errors for SGDL models using 
varying-width networks.}
\label{tab:exp1_sgl2}
\end{table}

The numerical experiments confirm the theoretical results established in the previous sections. In particular, Figures \ref{fig:exp1_grade1} and \ref{fig:exp1_grade2} show that the relative error is bounded by the training error up to a constant factor, as stated in Theorem \ref{prop_error_multi_discrete2}, assuming the quadrature error is negligible. Moreover, the last columns of Tables \ref{tab:exp1_amgdl1} and \ref{tab:exp1_amgdl2} verify that the computational time scales linearly with the number of grades, in agreement with Theorem \ref{Training_Time}.

Finally, we summarize the relative errors of all models used in this experiment in Table \ref{tab1:err_summary}. As shown in the table, for both equal-width and varying-width network architectures, AMGDL consistently outperforms SGDL by approximately one order of magnitude in accuracy. Additionally, within both the AMGDL and SGDL frameworks, varying-width networks achieve slightly better performance than their equal-width counterparts. Among all the models evaluated, our proposed V-AMGDL algorithm achieves the lowest error and delivers the best overall performance.

\begin{table}[H]
\centering
\scalebox{0.7}{
\begin{tabular}{c|cc|cc}
\hline
Model & E-AMGDL & E-SGL & V-AMGDL & V-SGL  \\ \hline\hline
RE  & 8.69e-5  & 3.37e-4 & \textbf{7.48e-5}  & 2.64e-4  \\ \hline
\end{tabular}}
\caption{Summary of relative errors (RE) for different models. “E-” and “V-” denote models with equal-width and varying-width network architectures, respectively.}
\label{tab1:err_summary}
\end{table}

\vspace{10pt}

\noindent\textbf{Example 2:} This example is designed to assess the performance of the AMGDL algorithm in learning solutions that exhibit multiple oscillatory scales, featuring a singularity and a highest wavenumber of $\kappa = 500$.

We consider the oscillatory Fredholm integral equation \eqref{fredholm_equation_operator} with the  constant kernel $K(s,t) = 0.45, s,t\in I$ and $\kappa = 500$. The constant kernel is chosen here to contrast with the previous example, where a non-separable kernel was used. The exact solution is given by
\begin{align}
    y(s) :=&\, \text{sign}(s)\,|s|^\epsilon \ln(|s|) \big[\sin(|s|) + s^3 e^{100is} + \cosh(s) e^{300is} + s^2 e^{-350is} + \label{example_solution2} \\
    &\quad \sinh(s) e^{400is} + (s^3 + \sin(s)) e^{450is} + |s|^3 e^{-500is} \big], \quad s \in I \setminus \{0\}, \nonumber
\end{align}
where $\epsilon > 0$, and $\text{sign}(s) := 1$ for $s > 0$ and $-1$ for $s < 0$. To ensure continuity, we define $y(0) := 0$, making $y$ a continuous function on the interval $I$. The corresponding right-hand side $f$ of the integral equation \eqref{fredholm_equation_operator} is then computed accordingly.

\begin{comment}
\begin{table}[H]
\centering
\begin{tabular}{|c|c|c|c|c|c|c|}
\hline
$\epsilon$ & 1 & 0.8 & 0.6 & 0.4 & 0.2 & 0.1 \\ \hline
$\|y\|_2$ estimation & 0.5717 & 0.6721 & 0.8141 & 1.0310 & 1.4032 & 1.7083 \\ \hline
\end{tabular}
\caption{The estimation of $\|y\|_2$ calculated on the test data.}
\end{table}
\end{comment}

In the following experiments, we consider
\[
\epsilon \in \{1, 0.8, 0.6, 0.4, 0.2, 0.1\}.
\]
As $\epsilon$ decreases, the solution becomes increasingly singular, posing greater challenges for numerical approximation. For the DNN-based methods, we set $\Gamma = 2$, $\gamma = 10$, $\beta = 1$, and $q = 1$, in accordance with the condition \eqref{rule_beta_gamma}. We also fix
\[
p_{500} := \lceil \gamma \cdot 500^\beta \rceil = 5,000.
\]

For the AMGDL model with equal-width networks and the corresponding SGDL model, the relative errors for different values of $\epsilon$ are presented in Tables \ref{tab:exp2_multi_result1} and \ref{tab:exp2_sgl_result1}, respectively. These results are also visualized in Figure \ref{figure2:exp2_multi_error1}. To keep the presentation concise, we report only the relative errors in this example, omitting training and validation losses due to space limitations.

\begin{table}[!ht]
\centering
\scalebox{0.9}{
\begin{tabular}{c||c|c|c|c|c|c}
\hline
 & $\epsilon=1$ & $\epsilon=0.8$ & $\epsilon=0.6$ & $\epsilon=0.4$ & $\epsilon=0.2$ & $\epsilon=0.1$ \\ \hline
Grade 1 &9.56e-1&2.87e-0& 2.58e-0 & 2.26e-0 & 1.94e-0 & 1.82e-0 \\ 
Grade 2 &9.55e-1&1.42e-0& 1.28e-0 & 1.24e-0 & 1.13e-1& 1.13e-0 \\ 
Grade 3 &1.82e-1&4.69e-2& 3.20e-2 & 3.77e-2 & 5.71e-2& 8.22e-2\\ 
Grade 4 &2.08e-3&4.73e-3& 5.01e-3 & 7.14e-3 & 1.94e-2 & 3.34e-2\\ 
Grade 5 &5.21e-4&3.66e-3& 3.89e-3 & 6.29e-3 & 1.23e-2 & 2.43e-2\\ 
Grade 6 &1.12e-4&2.55e-3& 3.36e-3 & 5.39e-3 & 1.13e-2 & 2.32e-2\\ 
Grade 7 &1.08e-4&1.96e-3& 2.64e-3 & 4.41e-3 & 1.09e-2 & 2.29e-2\\ 
Grade 8 &7.17e-5&\textbf{1.80e-3}& \textbf{2.50e-3} & \textbf{4.05e-3} & \textbf{1.07e-2} & \textbf{2.23e-2} \\ 
Grade 9 &\textbf{6.92e-5}&6.28e-3& 3.43e-3 & 6.02e-3& 1.25e-2 & 2.38e-2 \\
Grade 10 &6.98e-5&-& - & - & - & - \\ 
 \hline \hline
Best RE & 6.92e-5 & 1.80e-3 & 2.50e-3 & 4.05e-3 & 1.07e-2 & 2.23e-2 \\ \hline
\end{tabular}}
\caption{Relative errors of AMGDL using equal-width networks across different grades and singularity levels.}
\label{tab:exp2_multi_result1}
\end{table}

\begin{table}[H]
\centering
\scalebox{0.9}{
\begin{tabular}{c||c|c|c|c|c|c}
\hline
 & $\epsilon=1$ & $\epsilon=0.8$ & $\epsilon=0.6$ & $\epsilon=0.4$ & $\epsilon=0.2$ & $\epsilon=0.1$ \\ \hline
SGDL-1 & 9.56e-1 & 2.47e-0 & 2.23e-0 & 1.98e-0 & 1.78e-0 & 1.71e-0 \\
SGDL-2 & 2.46e-3 & 2.24e-2 & 1.62e-2 & 2.17e-2 & 4.11e-2 & 6.99e-2 \\
SGDL-3 & 5.38e-4 & 2.58e-2 & 1.95e-2 & 2.70e-2 & 3.52e-2 & 4.89e-2 \\
SGDL-4 & 6.01e-4 & 3.83e-2 & 3.42e-2 & 3.22e-2 & 3.27e-2 & 4.65e-2 \\
SGDL-5 & 2.42e-4 & 5.79e-2 & 3.55e-2 & 4.33e-2 & 7.52e-2 & 8.78e-1 \\
SGDL-6 & 1.07e-3 & 2.34e-1 & 3.14e-1 & 5.90e-1 & 4.73e-2 & 5.26e-2 \\
SGDL-7 & 7.00e-4 & 1.24e-1 & 1.68e-1 & 2.12e-1 & 4.51e-2 & 3.27e-2 \\
SGDL-8 & 5.40e-4 & 1.04e-2 & 5.51e-3 & 1.32e-2 & 2.26e-2 & 3.36e-2 \\
SGDL-9 & 4.84e-4 & 3.99e-3 & 5.50e-3 & 8.96e-3 & 1.37e-2 &  5.55e-2\\
SGDL-10 & 4.12e-4 & 4.44e-3 &\textbf{4.10e-3} & 8.25e-3 & \textbf{1.23e-2} & 3.75e-2 \\
SGDL-11 & \textbf{3.53e-4} &\textbf{3.98e-3}  & 4.63e-3 & \textbf{7.28e-3} & 1.26e-2 & 2.51e-2 \\
SGDL-12 & 3.93e-4 & 8.85e-3 & 5.23e-3 & 7.43e-3 & 1.67e-2 & \textbf{2.25e-2}  \\ \hline\hline
Best RE & 3.53e-4 & 3.98e-3 & 4.10e-3 & 7.28e-3  & 1.23e-2  & 2.25e-2 \\ \hline
\end{tabular}}
\caption{Relative errors for SGDL models using equal-width networks across singularity levels.}
\label{tab:exp2_sgl_result1}
\end{table}

\begin{figure}[htbp]
  \centering
  \subfigure[AMGDL]{
    \begin{minipage}{0.46\textwidth}
      \centering
      \includegraphics[width=\textwidth]{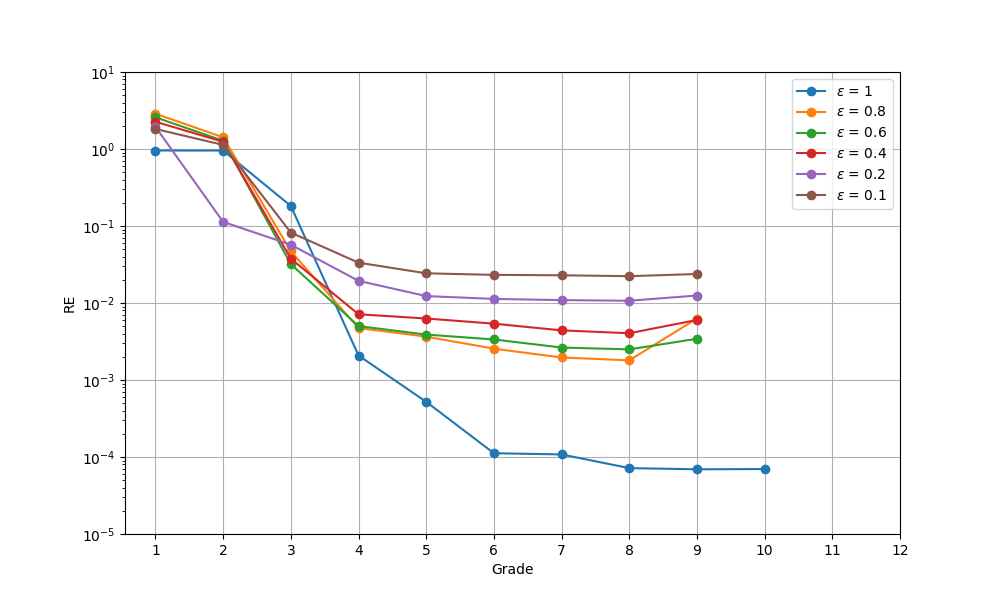}
      %\label{fig:plot1}
    \end{minipage}
  }
  \hfill
  \subfigure[SGDL]{
    \begin{minipage}{0.46\textwidth}
      \centering
      \includegraphics[width=\textwidth]{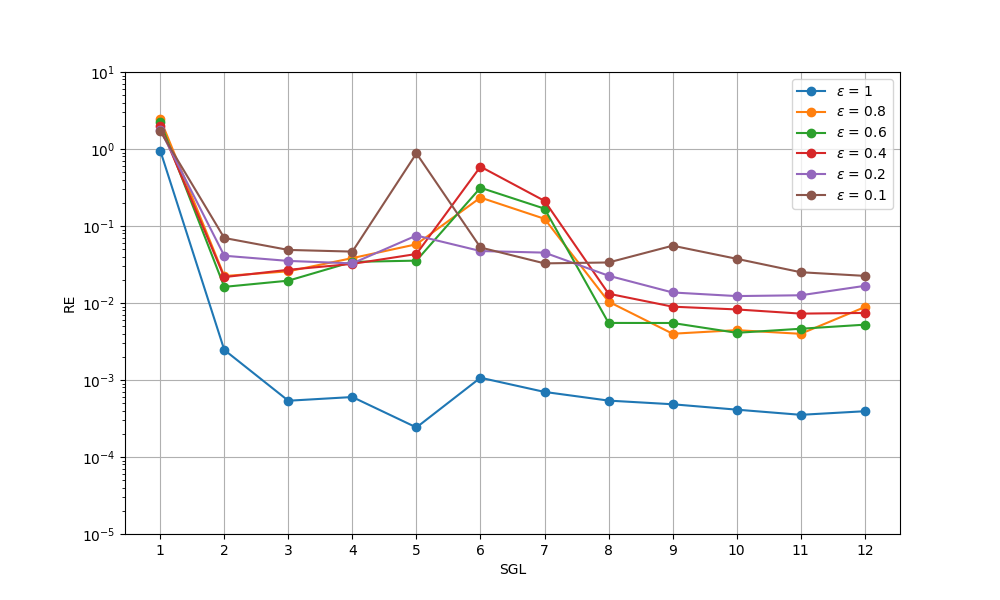}
      %\label{fig:plot2}
    \end{minipage}
  }
  \caption{Relative errors for AMGDL and SGDL models using equal-width networks across different singularity levels.}
  \label{figure2:exp2_multi_error1}
\end{figure}

% \begin{figure}[ht]
% \centering
%  \includegraphics[width=0.8\textwidth]{figure/exp2/equ_multi.png}
%      \caption{Relative error for AMGDL with equal width for different $\epsilon$.}
%      \label{figure2:exp2_multi_error1}
% \end{figure}

For the case of varying-width networks, the relative errors of the AMGDL and the corresponding SGDL models across different levels of singularity   $\epsilon$ are presented in Tables \ref{tab:exp2_multi_result2} and \ref{tab:exp2_sgl_result2}, respectively, and visualized in Figure \ref{figure2:exp2_multi_error2}.

\begin{table}[!ht]
\centering
\scalebox{0.9}{
\begin{tabular}{l||c|c|c|c|c|c}
\hline
 & $\epsilon=1$ & $\epsilon=0.8$ & $\epsilon=0.6$ & $\epsilon=0.4$ & $\epsilon=0.2$ & $\epsilon=0.1$ \\ \hline
Grade 1 & 9.56e-1 & 2.87e-0 & 2.58e-0 & 2.26e-0 & 1.95e-0 & 1.82e-0 \\ 
Grade 2 & 9.47e-1 & 1.35e-0 & 1.27e-0 & 1.15e-0 & 1.11e-0 & 1.13e-0 \\ 
Grade 3 & 9.59e-3 & 1.73e-2 & 1.25e-2 & 1.73e-2 & 4.30e-2 & 1.10e-1 \\ 
Grade 4 & 2.82e-4 & 4.75e-3 & 3.72e-3 & 6.05e-3 & 1.75e-2 & 4.30e-2 \\ 
Grade 5 & 1.08e-4 & 2.92e-3 & 2.74e-3 & 4.86e-3 & 1.37e-2 & 3.00e-2 \\ 
Grade 6 & 4.76e-5 & 2.10e-3 & 2.27e-3 & 3.89e-3 & 1.15e-2 & 2.40e-2 \\ 
Grade 7 & \textbf{3.62e-5} & 1.41e-3 & 2.09e-3 & 3.44e-3 & 1.14e-2 & \textbf{2.20e-2} \\ 
Grade 8 & 4.35e-5 & \textbf{1.03e-3} & 1.76e-3 & 3.14e-3 & \textbf{1.13e-2} & 3.06-2 \\ 
Grade 9 & - & 1.93e-2 & \textbf{1.48e-3} & 2.87e-3 & 1.25e-2 &-  \\
Grade 10 & - & - & 6.20e-2 & 2.60e-3 & - &  -\\ 
Grade 11 & - & - & - & \textbf{2.32e-3} & - & - \\ 
Grade 12 & - & - & - & 1.53e-2 & - & - \\ \hline\hline
Best & 3.62e-5 & 1.27e-3 & 1.48e-3 & 2.32e-3 & 1.13e-2 & 2.20e-2 \\ \hline
\end{tabular}}
\caption{Relative errors of AMGDL using varying-width networks across different grades and singularity levels.}

\label{tab:exp2_multi_result2}
\end{table}

\begin{table}[H]
\centering
\scalebox{0.9}{
\begin{tabular}{l||c|c|c|c|c|c}
\hline
 & $\epsilon=1$ & $\epsilon=0.8$ & $\epsilon=0.6$ & $\epsilon=0.4$ & $\epsilon=0.2$ & $\epsilon=0.1$ \\ \hline
SGDL-1 & 9.56e-1 & 2.47e-0 & 2.25e-0 & 1.95e-0 & 1.75e-0 & 1.71e-0 \\ 
SGDL-2 & 5.03e-4 & 2.36e-2 & 1.60e-2 & 1.76e-2 & 3.50e-2 & 6.23e-2 \\ 
SGDL-3 & 4.89e-4 & 1.75e-2 & 1.76e-2 & 1.86e-2 & 3.03e-2 & 4.89e-2 \\ 
SGDL-4 & 3.27e-4 & 3.01e-2 & 3.25e-2 & 3.30e-2 & 3.05e-2 & 4.14e-2 \\ 
SGDL-5 & \textbf{2.17e-4} & 1.05e-1 & 8.79e-2 & 3.36e-1 & 1.68e-1 & 6.57e-2 \\ 
SGDL-6 & 7.50e-4 & 5.85e-3 & 7.20e-3 & 1.01e-2 & 2.27e-2 & 3.77e-2 \\ 
SGDL-7 & 3.75e-4 & 3.15e-3 & 4.18e-3 & 9.25e-3 & 1.54e-2 & 4.78e-2  \\ 
SGDL-8 & 2.27e-4 & \textbf{2.45e-3} & \textbf{4.05e-3} & \textbf{4.88e-3} & \textbf{1.30e-2} & 2.83e-2 \\ 
SGDL-9 & 7.16e-4 & 3.02e-3 & 7.90e-3 & 7.58e-3 & 1.56e-2 & \textbf{2.52e-2} \\ 
SGDL-10 & 1.01e-3  & 5.77e-3  & 9.49e-3  & 7.64e-3 & 2.28e-2 & 3.30e-2 \\ 
SGDL-11 & 4.21e-4  & 8.36e-3 & 7.47e-3 & 8.57e-3 &1.42e-2  & 3.19e-2 \\ 
SGDL-12 & 3.72e-3 & 6.83e-3  & 8.36e-3 & 1.03e-2 & 1.99e-2& 3.52e-2 \\ \hline\hline
Best & 2.17e-4 & 2.45e-3 & 4.05e-3  & 4.88e-3 &1.30e-2  & 2.52e-2 \\ \hline
\end{tabular}}
\caption{Relative errors for SGDL models using varying-width networks across singularity levels.}
\label{tab:exp2_sgl_result2}
\end{table}

\begin{figure}[htbp]
  \centering
  \subfigure[AMGDL]{
    \begin{minipage}{0.46\textwidth}
      \centering
      \includegraphics[width=\textwidth]{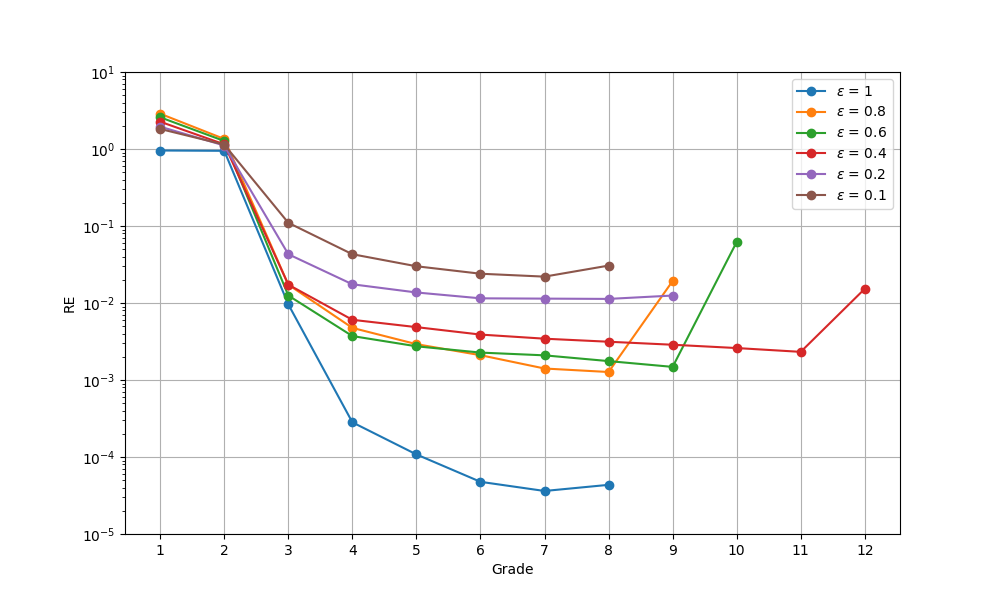}
      %\label{fig:plot1}
    \end{minipage}
  }
  \hfill
  \subfigure[SGDL]{
    \begin{minipage}{0.46\textwidth}
      \centering
      \includegraphics[width=\textwidth]{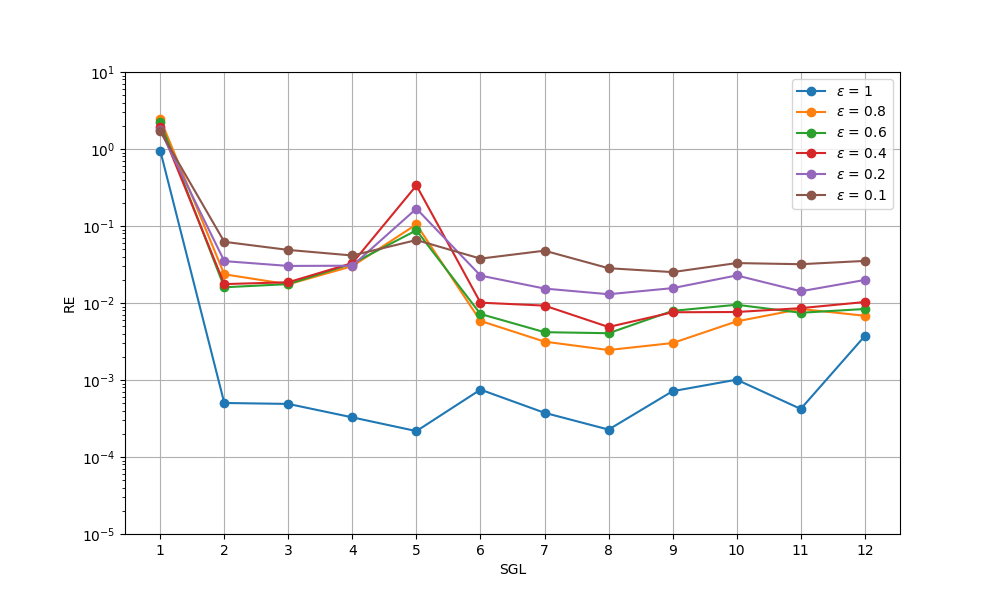}
      %\label{fig:plot2}
    \end{minipage}
  }
\caption{Relative errors for AMGDL and SGDL models using varying-width networks across different singularity levels.}

  \label{figure2:exp2_multi_error2}
\end{figure}

Both Tables \ref{tab:exp2_multi_result1} and \ref{tab:exp2_multi_result2} demonstrate that the proposed algorithm maintains robust adaptivity across different levels of singularity. A key advantage is that the depth of the neural network does not need to be predetermined; it is automatically determined by the stopping criterion. This overcomes a common limitation of traditional DNNs, where the optimal network depth for a given solution is generally unknown in advance.

Finally, we summarize the relative errors of all models evaluated in this experiment in Table \ref{tab2:err_summary}. As shown in the table, all deep learning models—both SGDL and AMGDL—achieve reasonably low error levels across various singularities. However, AMGDL models consistently outperform their single-grade counterparts, particularly as the singularity increases (i.e., smaller $\epsilon$ values). Moreover, the AMGDL models with varying-width networks yield slightly lower errors than those with equal-width architectures, demonstrating greater robustness to solution singularities. Overall, the proposed V-AMGDL method achieves the best performance, especially for highly singular problems.

The numerical results in this section demonstrate that, regardless of whether the smooth kernel 
$K$ is separable or non-separable, the AMGDL model consistently outperforms the SGDL model across both equal-width and varying-width networks, as well as for solutions exhibiting different types of singularities. Moreover, within the AMGDL framework, varying-width networks achieve better performance than their equal-width counterparts.

\begin{table}[H]
\centering
\scalebox{0.7}{
\begin{tabular}{c||c|c||c|c}
\hline
Model & E-AMGDL & E-SGL (Best) & V-AMGDL & V-SGL (Best)\\ \hline
$\epsilon=1$ & 6.92e-5 & 3.53e-4 & \textbf{3.62e-5} & 2.17e-4    \\ 
$\epsilon=0.8$ & 1.80e-3 & 3.98e-3 & \textbf{1.03e-3} & 2.45e-3   \\ 
$\epsilon=0.6$ & 2.50e-3 & 4.10e-3 & \textbf{1.48e-3} & 4.05e-3   \\ 
$\epsilon=0.4$ & 4.05e-3 & 7.28e-3 & \textbf{2.32e-3} &4.88e-3  \\ 
$\epsilon=0.2$ & \textbf{1.07e-2} & 1.23e-2 & 1.13e-2 & 1.30e-2  \\ 
$\epsilon=0.1$ & 2.23e-2 & 2.25e-2 & \textbf{2.20e-2}  & 2.52e-2    \\ \hline
\end{tabular}}
\caption{Summary of relative errors (RE) for different models across various values of $\epsilon$. “E-” and “V-” indicate models with equal-width and varying-width networks, respectively. “(Best)” denotes the best performance among 12 SGDL models.}
\label{tab2:err_summary}
\end{table}

\section{Conclusion}

This paper presents a comprehensive theoretical and algorithmic development of Multi-Grade Deep Learning (MGDL) for solving highly oscillatory Fredholm integral equations. While prior work applied MGDL empirically to this class of problems, our work is the first to provide a rigorous error analysis of both the continuous and discrete MGDL models in this context. A key theoretical contribution is the demonstration that, under sufficiently accurate quadrature, the discrete model inherits the convergence and stability properties of the continuous formulation.

Building on this analysis, we developed a novel adaptive MGDL algorithm that dynamically selects the number of network grades based on training error. This adaptivity is theoretically justified by our result showing that the dominant source of approximation error lies in the DNN training process, not the discretization or quadrature. The proposed method incrementally constructs the network grade-by-grade, eliminating the need to pre-specify network depth—a significant limitation in standard deep learning approaches.

Numerical experiments confirm the effectiveness and robustness of the adaptive MGDL algorithm, particularly in challenging settings involving high-frequency oscillations and singular solutions. Compared to conventional single-grade networks, the adaptive MGDL achieves superior accuracy and generalization.

Overall, this work introduces a theoretically grounded, scalable, and adaptive deep learning framework for operator equations, demonstrating the distinct advantages of MGDL in handling multiscale and oscillatory phenomena.

\bigskip

\noindent
{\bf Acknowledgment:} Y. Xu is a Professor Emeritus of Mathematics at Syracuse University. All correspondence should be addressed to Y. Xu.

\bigskip
\noindent
{\bf Funding Statement:} Y. Xu is partially supported by the U.S. National Science
Foundation under grant DMS-2208386.

\bigskip

\noindent
{\bf Data Availability:} The data will be made available on reasonable request.

\bigskip

\noindent
{\bf Declarations:}
The authors declare that they have no conflict of interest.

%\begin{acknowledgements}
%If you'd like to thank anyone, place your comments here
%and remove the percent signs.
%\end{acknowledgements}

% Authors must disclose all relationships or interests that 
% could have direct or potential influence or impart bias on 
% the work: 
%
% \section*{Conflict of interest}
%
% The authors declare that they have no conflict of interest.

% BibTeX users please use one of
%\bibliographystyle{spbasic}      % basic style, author-year citations
\bibliographystyle{spmpsci}      % mathematics and physical sciences
\bibliography{references}   % name your BibTeX data base

\end{document}